\documentclass[11pt,a4paper]{article}
\usepackage[a4paper,margin=30mm]{geometry}

\usepackage[T1]{fontenc}
\usepackage[utf8]{inputenc}

\usepackage{amsmath,amsthm,amssymb,bm}
\usepackage{mathtools}
\usepackage{mathrsfs}
\usepackage{cases}

\usepackage{graphicx,xcolor}
\usepackage{float}

\usepackage{enumitem}

\usepackage[colorlinks,urlcolor=blue,setpagesize=false]{hyperref}
\usepackage{orcidlink}

\theoremstyle{plain}
\newtheorem{thm}{Theorem}[section]
\newtheorem{lem}[thm]{Lemma}

\newtheorem{prop}[thm]{Proposition}

\theoremstyle{definition}
\newtheorem{df}[thm]{Definition}

\theoremstyle{remark}
\newtheorem{rem}[thm]{Remark}

\newcommand{\relmiddle}[1]{\mathrel{}\middle#1\mathrel{}}
\numberwithin{equation}{section}
\allowdisplaybreaks[3]

\newcommand{\E}{\mathbb{E}}

\renewcommand{\L}{\mathbb{L}}
\newcommand{\N}{\mathbb{N}}
\renewcommand{\P}{\mathbb{P}}

\newcommand{\R}{\mathbb{R}}
\renewcommand{\S}{\mathbb{S}}

\newcommand{\Z}{\mathbb{Z}}
\newcommand{\1}{\mbox{\rm1}\hspace{-0.25em}\mbox{\rm l}}

\newcommand{\cA}{\mathcal{A}}

\newcommand{\cC}{\mathcal{C}}

\newcommand{\cE}{\mathcal{E}}
\newcommand{\cF}{\mathcal{F}}

\newcommand{\cH}{\mathcal{H}}

\newcommand{\cK}{\mathcal{K}}

\newcommand{\cP}{\mathcal{P}}
\newcommand{\cQ}{\mathcal{Q}}

\newcommand{\cS}{\mathcal{S}}
\newcommand{\cT}{\mathcal{T}}

\newcommand{\scrS}{\mathscr{S}}

\renewcommand{\a}{\alpha}
\renewcommand{\b}{\beta}
\newcommand{\gm}{\gamma}
\newcommand{\Gm}{\Gamma}
\newcommand{\dl}{\delta}
\newcommand{\Dl}{\Delta}
\newcommand{\eps}{\varepsilon}

\newcommand{\Lm}{\Lambda}
\newcommand{\sg}{\sigma}
\newcommand{\Sg}{\Sigma}

\newcommand{\Om}{\Omega}

\newcommand{\del}{\partial}

\newcommand{\bond}{\mathrm{bond}}
\newcommand{\core}{\mathrm{core}}
\newcommand{\cycle}{\mathrm{cycle}}
\newcommand{\face}{\mathrm{face}}
\newcommand{\fin}{\mathrm{fin}}
\newcommand{\shield}{\mathrm{shield}}

\newcommand{\tube}{\mathrm{tube}}

\DeclareMathOperator{\diam}{diam}
\DeclareMathOperator{\Ends}{Ends}
\DeclareMathOperator{\im}{Im}
\DeclareMathOperator{\Inf}{Inf}
\DeclareMathOperator{\Var}{Var}
\DeclareMathOperator{\width}{width}

\usepackage{comment}

\usepackage[colorinlistoftodos,prependcaption,textsize=tiny]{todonotes}

\title{
Sharp Phase Transition for the Formation of Infinite Tubes
}

\author{
Shu \textsc{Kanazawa}$^{1}$\orcidlink{0000-0002-3368-1629}
\and
Omer \textsc{Bobrowski}$^{1}$\orcidlink{0000-0002-0860-7099}
\and
Primoz \textsc{Skraba}$^{1}$\orcidlink{0000-0002-5300-8984}
}

\date{}

\begin{document}
\maketitle

\begin{center}
$^{1}$ School of Mathematical Sciences, Queen Mary University of London, London, UK.\\[2mm]

E-mail: s.kanazawa@qmul.ac.uk,
o.bobrowski@qmul.ac.uk,
p.skraba@qmul.ac.uk.\\[1mm]
\end{center}

\begin{abstract}
Classical bond percolation theory studies the conditions for a given point in a random graph to be connected to infinity, or ``escape'' to infinity, via a sequence of random edges.
In this work, we present a higher-dimensional generalization of this question, asking whether a fixed loop (or, more generally, a topological sphere) can escape to infinity via a tube formed by random plaquettes.
We refer to this phenomenon as \emph{tube percolation}.

We first compare tube percolation with previously studied higher-dimensional percolation phenomena, including face and cycle percolation.
For tubes of codimension one, we further relate the critical probability for tube percolation to those for percolation of finite clusters and shielded percolation in the dual bond percolation model.

Next, we introduce a tubular analogue of the classical one-arm event, the \emph{tubular one-arm event}, and prove that it exhibits a sharp threshold at criticality: below criticality, its probability decays exponentially in scale, whereas above criticality, it admits a mean-field-type lower bound.
The proof relies on the O'Donnell--Saks--Schramm--Servedio~(OSSS) inequality together with an exploration algorithm adapted to the topology of tubes.

Finally, we study the \emph{tubular box-crossing property}.
Unlike ordinary path connectedness, ``tube connectedness'' is not transitive, and thus there is no natural notion of clusters.
Nevertheless, we establish an analogue of the uniqueness of the infinite cluster from classical bond percolation.
Combining this result with the sharp threshold for the tubular one-arm event, we prove that the existence of a box-crossing tube also exhibits a sharp threshold.
\end{abstract}

\noindent\textbf{Acknowledgements.}
The authors would like to thank Stephen Muirhead for valuable discussions on related topics in percolation theory.
OB and SK were partially supported by the EPSRC grant EP/Y008642/1.
OB and PS were partially supported by the EPSRC grant EP/Y028872/1, and by the Leverhulme Trust grant RPG-2023-144.

\section{Introduction}
\paragraph{Background.}
Percolation theory studies the formation of large-scale structures in random media.
The subject was originally introduced by Broadbent and Hammersley~\cite{BH1957} in the context of fluid flow through porous materials, and has since developed into a central topic in probability theory and statistical physics.
In its classical formulation, one considers the Bernoulli bond percolation model on the $d$-dimensional hypercubic lattice, where each edge is declared open (present) independently with probability $p$.
A fundamental question is whether an infinite cluster emerges in the random subgraph consisting of open edges.
It is well known that this phenomenon exhibits a phase transition: there exists a critical probability $p_c\in(0,1)$ such that an infinite open cluster exists almost surely when $p>p_c$, whereas it almost surely does not exist when $p<p_c$.
The properties of this phase transition, as well as the geometric structure of open clusters, have been investigated extensively.
We refer the reader to Grimmett's monograph~\cite{Gr1999} and the survey of Duminil-Copin~\cite{D2018} for comprehensive introductions to the subject.

\smallskip
There has been considerable interest in higher-dimensional analogues of percolation models and phenomena.
One natural extension of bond percolation replaces edges by $k$-dimensional cubical cells~(\emph{$k$-plaquettes}) for a fixed $1\le k\le d-1$.
This leads to the \emph{$k$-plaquette percolation model} on $\R^d$, in which each $k$-plaquette is declared open independently with probability $p$.
The $k=1$ case coincides with the classical bond percolation model.
Within this framework, one may consider the emergence of an infinite cluster of adjacent open $k$-plaquettes, a phenomenon known as \emph{face percolation}.
From this perspective, however, the higher-dimensional nature of the plaquettes does not play an essential role.
A different direction was initiated by Aizenman, Chayes, Chayes, Fr\"ohlich, and Russo~\cite{ACCFR1983}, who studied random surfaces in $\R^3$ formed by open $2$-plaquettes having a prescribed large rectangular loop as their boundary.
They proved the existence of a phase transition between the so-called \emph{area law} and \emph{perimeter law}.
Roughly speaking, the probability that such a random surface exists decays exponentially, with an exponent proportional either to the area enclosed by the loop or to its perimeter, depending on the regime.
This result was recently extended to the codimension-one case~($k=d-1$) by Duncan and Schweinhart~\cite{DS2025b}.
Their proof relies on a characterization of the existence of such hypersurfaces via a dual bond percolation representation, in which each $(d-1)$-plaquette corresponds to a unique dual edge intersecting it at its midpoint, and the dual edge is declared open if and only if the corresponding plaquette is closed.

\smallskip
In the codimension-one case, the dual bond percolation model allows many questions about plaquette configurations to be related to properties of the dual bond configuration.
From this perspective, the $(d-1)$-plaquette percolation model has been studied extensively.
Grimmett and Holroyd~\cite{GH2010} investigated the existence of hyperspheres of open $(d-1)$-plaquettes surrounding the origin in $\R^d$.
Subsequently, Grimmett, Holroyd, and Kozma~\cite{GHK2014} studied the relationship between infinite hypersurfaces of open $(d-1)$-plaquettes~(also referred to as \emph{cycle percolation}) and macroscopic structures formed by finite open clusters in the dual bond percolation model.
In particular, they showed that in sufficiently high dimensions an infinite hypersurface of open $(d-1)$-plaquettes may coexist with an infinite component consisting of neighboring finite open clusters in the dual model.
From a more algebraic-topological perspective, Hiraoka and Mikami~\cite{HM2020} investigated the structure of $(d-1)$-dimensional homology generators arising in the $(d-1)$-plaquette percolation model, which naturally correspond to finite open clusters in the dual model.
In contrast, for general $k$-plaquette percolation models, systematic techniques for their study remain largely unexplored.

\paragraph{Our contribution.}
The objective of this paper is to study a new higher-dimensional percolation phenomenon in the general $k$-plaquette percolation model~($1\le k\le d-1$).
While classical bond percolation asks whether a point (a zero-dimensional object) can escape to infinity through open edges, in the present work we ask whether a loop (or, more generally, a topological sphere) can escape to infinity via a tube formed by open plaquettes.
We therefore study the emergence of infinite $k$-dimensional tubes in the $k$-plaquette percolation model in $\R^d$.

\smallskip
We state our main results informally here; precise statements will be given in Section~\ref{sec:main}, after the necessary definitions and notation have been introduced in Section~\ref{sec:prelim}.
\begin{enumerate}
  \item We prove that there exists a critical probability $p_c^{\tube}\in(0,1)$ such that tube percolation occurs almost surely when $p>p_c^{\tube}$, and it almost surely does not occur when $p<p_c^{\tube}$.
  In Theorem~\ref{thm:comparison}, we compare $p_c^{\tube}$ with the critical probabilities associated with several other higher-dimensional percolation phenomena, such as face, core, and cycle percolation.
  In the codimension-one case~($k=d-1$), we further relate $p_c^{\tube}$ to the critical probabilities for percolation of finite open clusters and shielded percolation in the dual bond percolation model.
  \item We analyze finite-volume approximations of tube percolation.
  In classical bond percolation, the properties of the phase transition are studied via the behavior of \emph{one-arm events}, that is, the event that the origin is connected to the boundary of the box $\Lm_n:=[-n,n]^d$ by an open path.
  In the present setting, we introduce an analogue of this event, which we call the \emph{tubular one-arm event}.
  For example, in the case $k=2$, this is the event that a loop near the origin is attached to a tube that reaches the boundary of $\Lm_n$.
  We show that tube percolation can be approximated by such events~(Proposition~\ref{prop:approx}) and that the tubular one-arm probability exhibits a sharp threshold at $p_c^{\tube}$~(Theorem~\ref{thm:sharp_1-arm}).
  More precisely, below $p_c^{\tube}$ the tubular one-arm probability decays exponentially in $n$, and above $p_c^{\tube}$ it admits a uniform lower bound proportional to $p-p_c^{\tube}$.
  \item We investigate an analogue of box-crossing events for tubes.
  In analogy with classical (left-to-right) box-crossing events for open paths, we consider the event that an open $k$-dimensional tube of bounded width connects two opposite faces of a large box.
  We prove that this \emph{tubular box-crossing property} also exhibits a sharp threshold at $p_c^{\tube}$~(Theorem~\ref{thm:box-crossing}): below $p_c^{\tube}$ the crossing probability tends to zero as the box size grows, whereas above $p_c^{\tube}$ it tends to one.
  This provides another finite-volume characterization of the phase transition for tube percolation.
  In analyzing this property, we establish an analogue of the uniqueness theorem for the infinite open cluster in classical bond percolation~(Theorem~\ref{thm:uniqueness}).
  Roughly speaking, almost surely, any pair of infinite tubes can be glued together into a single bi-infinite tube.
  This uniqueness-type result enables us to combine two large tubes that appear in a large box from the left and right faces, respectively, into a single tube connecting the two opposite faces of the box.
\end{enumerate}

In contrast to classical bond percolation in which open paths can be concatenated in a straightforward manner, tubes do not admit such a simple concatenation procedure, as the tube structure may be destroyed at the junctions.
As a result, relating the existence of infinite tubes to tubular one-arm events is substantially more delicate than in the classical setting.
To address this difficulty, we introduce the notion of ``collars,'' which are loops (or spheres) together with a local trajectory that passes through them.
This additional structure enables us to control how tubes are ``glued'' together and plays a crucial role in both the approximation result mentioned in item~(2) and the uniqueness-type result in item~(3).

\smallskip
To prove the sharp threshold for the tubular one-arm event stated in item~(2), we employ the O'Donnell--Saks--Schramm--Servedio~(OSSS) inequality~\cite{OSSS2005}.
A central step is the design of an appropriate randomized algorithm that determines the tubular one-arm event while keeping the revealment probability of individual plaquettes sufficiently small.
The topology of tubes makes this task more involved than in classical path-connection cases.
Instead of tracking vertices and growing clusters, the algorithm we employ tracks loops (or spheres) and the growth of tubes around them.

\paragraph{Related work.}
Several related higher-dimensional percolation models have been investigated from different perspectives.
Hiraoka and Shirai~\cite{HS2016} introduced a higher-dimensional generalization of the classical random-cluster model~\cite{FK1972,Gr2004}.
In this model, configurations of open plaquettes are sampled according to a probability measure whose weights depend not only on the number of open plaquettes but also on the topology of the resulting configuration, thereby introducing long-range dependencies among plaquettes.
This model is closely related to higher-dimensional Potts lattice gauge theory, and its properties have been studied extensively in recent work by Duncan and Schweinhart~\cite{DS2025b,DS2025a,DS2025c}.

\smallskip
Another direction concerns continuum models.
In the setting of continuum percolation, Hirsch and Valesin~\cite{HV2025} studied analogues of face and cycle percolation in random geometric complexes.
In particular, their work focuses on Vietoris--Rips complexes built over a stationary Poisson process in Euclidean space.
In this setting, one may also consider the emergence of an infinite cluster of adjacent simplices.
They established a sharp phase transition for face percolation and derived comparison results between the critical intensities for face and cycle percolation.

\smallskip
A further related line of research is the study of \emph{homological percolation}, introduced by Bobrowski and Skraba~\cite{BS2020,BS2022}.
In this framework, one considers random media~(e.g., the Poisson--Boolean model and Gaussian random fields) on a flat torus and investigates the formation of large-scale homological features.
Here, ``large-scale'' is not defined in terms of geometric size, but rather in terms of the topology of the ambient torus.
More precisely, a cycle in the random medium is called a \emph{giant cycle} if it represents a nontrivial homology class of the ambient torus.
Intuitively, such cycles correspond to structures that wrap around the torus rather than remaining confined to a local region.
Bobrowski and Skraba~\cite{BS2022} showed that giant $k$-cycles~(for all $1\le k\le d-1$) appear in the thermodynamic regime of continuum percolation.
Subsequently, variants of this framework have also been studied in~\cite{DKS2025,DS2025a,SS2026}.

\paragraph{Outline.}
The remainder of the paper is organized as follows.
Section~\ref{sec:prelim} reviews the plaquette percolation model and several relevant percolation phenomena.
Section~\ref{sec:main} introduces tube percolation and states our main results.
Section~\ref{sec:critical} establishes comparisons between tube percolation and other percolation phenomena introduced in the preliminaries.
Section~\ref{sec:approx} proves that the tube percolation event can be approximated by tubular one-arm events.
Section~\ref{sec:sharp_1-arm} develops a randomized algorithm and applies the OSSS inequality to prove the sharp threshold for the tubular one-arm event.
Section~\ref{sec:uniqueness} discusses an analogue of the uniqueness of the infinite open cluster.
Section~\ref{sec:box-crossing} establishes the sharp threshold for the tubular box-crossing property using this analogue of the uniqueness theorem.
Finally, Section~\ref{sec:discussion} discusses several open problems and possible directions for future research in higher-dimensional percolation.

\section{Preliminaries}
\label{sec:prelim}
Throughout this article, we fix an integer $d\ge2$ as the dimension of the ambient space $\R^d$, on which models and phenomena are considered.
In this section, we review the plaquette percolation model on $\R^d$ and relevant percolation phenomena.

\subsection{Plaquette Percolation Model}
An \textit{elementary interval} is a closed interval $I\subset\R$ of the form either $I=[l,l+1]$ or $I=\{l\}$ for some integer $l\in\Z$.
In the former case it is called \emph{nondegenerate}, and in the latter case \emph{degenerate}.
An \textit{elementary cube} in $\R^d$ is a product $I_1\times I_2\times\cdots\times I_d\subset\R^d$, where each $I_i$ is an elementary interval.
A \textit{cubical set} in $\R^d$ is a (possibly infinite) union of elementary cubes in $\R^d$.
Fix an integer $1\le k\le d-1$.
An elementary cube consisting of exactly $k$ nondegenerate and $d-k$ degenerate elementary intervals is called a \textit{$k$-plaquette} in $\R^d$.
Let $\cK_k^d$ denote the set of all $k$-plaquettes in $\R^d$.
For a subset $\Lm\subset\R^d$, we also write $\cK_k^d(\Lm):=\{Q\in\cK_k^d\mid Q\subset\Lm\}$.
The \emph{$k$-dimensional complete cubical set} $|\cK_k^d|$ is defined by
\[
|\cK_k^d|:=
\bigcup_{Q\in\cK_k^d}Q.
\]

\smallskip
Let $\Om:=\{0,1\}^{\cK_k^d}$, equipped with the product $\sg$-field $\cF$.
An element $\eta\in\Om$ is called a \emph{configuration}.
For each configuration $\eta\in\Om$, we define its \emph{geometric realization} by
\[
|\eta|:=|\cK_{k-1}^d|\,\cup\,\bigcup_{Q\in\cK_k^d\colon\eta(Q)=1}Q.
\]
Then $|\eta|$ is a cubical set.
In particular, $|\eta|=|\cK_{k-1}^d|$ if $\eta\equiv0$, and $|\eta|=|\cK_k^d|$ if $\eta\equiv1$.
For $p\in[0,1]$, let $\P_p$ denote the product measure on $(\Om,\cF)$ with parameter $p$, that is, $\P_p(\eta(Q)=1)=p$ for each $k$-plaquette $Q$, and let $\E_p[\cdot]$ be the associated expectation.
Given $\eta\in\Om$, a $k$-plaquette $Q$ is said to be \emph{open} if $\eta(Q)=1$, and \emph{closed} otherwise.
The probability space $(\Om,\cF,\P_p)$ is referred to as the \emph{$k$-plaquette percolation model} on $\R^d$.
When $k=1$, this model reduces to classical bond percolation model on the $d$-dimensional hypercubic lattice $\L^d=(\Z^d,\E^d)$, where $\E^d:=\{\{x,y\}\subset\Z^d\mid\|x-y\|_1=1\}$.
The case $k=d-1$ has been studied in~\cite{ACCFR1983,GH2010,GHK2014,HM2020}.

\subsection{Face, Core, and Cycle Percolation}
We briefly review several other higher-dimensional percolation phenomena for comparison with our tube percolation.
Let $1\le k\le d-1$.

\paragraph{Face percolation.}
Two distinct $k$-plaquettes $Q,Q'\in\cK_k^d$ are said to be \emph{face-adjacent} if they share a common $(k-1)$-plaquette.
A collection $\cQ\subset\cK_k^d$ of $k$-plaquettes is said to be \emph{face-connected} if, for every pair $Q,Q'\in\cQ$, there exits a finite sequence $(Q_0,Q_1,\ldots,Q_I)$ of $k$-plaquettes in $\cQ$ such that $Q_0=Q$, $Q_I=Q'$, and $Q_{i-1}$ and $Q_i$ are face-adjacent for all $i=1,2,\ldots,I$.
We say that \emph{face percolation} occurs if there exists an infinite face-connected collection consisting of open $k$-plaquettes.
This can be viewed as site percolation on the graph whose vertices are $k$-plaquettes, with edges connecting face-adjacent pairs.
The \emph{critical probability} is defined by
\[
p_c^{\face}(k,d):=\inf\{p\in[0,1]\mid\P_p(\text{face percolation occurs})>0\}.
\]
When $k=1$, the critical probability $p_c^{\face}(1,d)$ coincides with the critical probability $p_c^{\bond}(d)$ for bond percolation on the hypercubic lattice $\L^d$.

\paragraph{Core percolation.}
A collection $\cQ\subset\cK_k^d$ of $k$-plaquettes is called a \emph{core} if every $(k-1)$-plaquette contained in the cubical set $\bigcup_{Q\in\cQ}Q\subset\R^d$ is incident to at least two $k$-plaquettes in $\cQ$.
In other words, $\bigcup_{Q\in\cQ}Q$ contains no $(k-1)$-plaquettes of degree one and hence admits no elementary collapse.
We say that \emph{core percolation} occurs if there exists an infinite face-connected core consisting of open $k$-plaquettes.
The \emph{critical probability} is defined by
\[
p_c^{\core}(k,d):=\inf\{p\in[0,1]\mid\P_p(\text{core percolation occurs})>0\}.
\]

\paragraph{Cycle percolation.}
For a collection $\cQ\subset\cK_k^d$ of $k$-plaquettes, define its (set-theoretic) \emph{boundary} by
\[
\partial_s\cQ
:=\{P\in\cK_{k-1}^d\mid\text{$\#\{Q\in\cQ\mid Q\supset P\}$ is odd}\}.
\]
We call a collection $\cQ$ a \emph{cycle} if $\partial_s\cQ=\emptyset$.
In homological terms, this is a $k$-cycle with $(\Z/2\Z)$-coefficients in the sense of Borel--Moore homology, since the collection $\cQ$ is allowed to be infinite.
We say that \emph{cycle percolation} occurs if there exists an infinite face-connected cycle consisting of open $k$-plaquettes.
The \emph{critical probability} is defined by
\[
p_c^{\cycle}(k,d):=\inf\{p\in[0,1]\mid\P_p(\text{cycle percolation occurs})>0\}.
\]

\begin{rem}
  Whenever necessary, we work with the completion of the probability space $(\Om,\cF,\P_p)$.
  Accordingly, all percolation events considered in this article are measurable with respect to the completed $\sg$-field, since they can be realized as analytic sets via a standard projection argument~(see, e.g.,~\cite{Bo2007}).
\end{rem}

By definition, it follows that
\[
p_c^{\face}(k,d)\le p_c^{\core}(k,d)\le p_c^{\cycle}(k,d).
\]
When $k=1$, we have
\begin{equation}\label{eq:all-bond_1}
p_c^{\face}(1,d)=p_c^{\core}(1,d)=p_c^{\cycle}(1,d)=p_c^{\bond}(d).
\end{equation}
Indeed, for $p>p_c^{\bond}(d)$, with positive probability, there exist two infinite self-avoiding edge-disjoint paths starting at the origin, which together form a $1$-cycle.
Moreover, it follows from a classical path-counting argument that $p_c^{\face}(k,d)>0$~(see~\cite[Theorem~1.33]{Gr1999}; see also~\cite[Proposition~2.1]{HM2020}).
As we will see in the next section, it also holds that $p_c^{\cycle}(k,d)<1$.

\subsection{Percolation of Finite Clusters and Shielded Percolation}
We review two percolation phenomena in the bond percolation model on $\L^d$ (that is, the $1$-plaquette percolation model on $\R^d$) that are closely related to cycle percolation.

\paragraph{Percolation of finite clusters.}
Let $\eta$ be a configuration of the bond percolation model on $\L^d$, and let $|\eta|$ denote the corresponding open subgraph of $\L^d$.
Two distinct finite open clusters (i.e., finite connected components of $|\eta|$) $C$ and $C'$ are said to be \emph{adjecent} if
\[
d_1(C,C'):=\inf\{\|x-y\|_1\mid x\in C,\,y\in C'\}=1.
\]
This adjacency relation induces an (abstract) graph, denoted by $G^{\fin}(\eta)$, whose vertices are the finite open clusters.
We say that \emph{percolation of finite clusters} occurs if the graph $G^{\fin}(\eta)$ contains an infinite connected component.
When $p$ is small, the percolation of finite clusters occurs with positive probability; however, for sufficiently large $p$, this phenomenon ceases.
The \emph{critical probability} is defined by
\[
p_c^{\fin}(d):=\sup\{p\in[0,1]\mid\P_p(\text{percolation of finite clusters occurs})>0\}.
\]
Since all open clusters are almost surely finite for $p<p_c^{\bond}(d)$, it follows that
\[
p_c^{\bond}(d)\le p_c^{\fin}(d).
\]
In~\cite{GHK2014}, using the triangle condition~\cite{AN1984,BA1991}, it was proved that the strict inequality $p_c^{\bond}(d)<p_c^{\fin}(d)$ holds whenever $d\ge19$.
In other words, there exists a regime of $p$ for which both the infinite open cluster and an infinite component of neighboring finite open clusters coexist.
Subsequently, the triangle condition was verified for $d\ge11$ in~\cite{FH2017}, and hence the strict inequality $p_c^{\bond}(d)<p_c^{\fin}(d)$ holds for $d\ge11$.
In~\cite{BDNS2020}, this was further improved to $d\ge10$ via comparison with \emph{shielded percolation}~(see below).

In~\cite{GHK2014}, the relationship between the cycle percolation in the case $k=d-1$ and the percolation of finite clusters was established via a dual model.
More precisely, it was shown that
\begin{equation}\label{eq:cycle-fin}
p_c^{\cycle}(d-1,d)\le1-p_c^{\fin}(d).
\end{equation}
Since $p_c^{\cycle}(k,d)$ is nonincreasing in $d$ (which follows immediately by restricting the $k$-plaquette percolation model on $\R^d$ to a lower-dimensional coordinate subspace), we obtain
\[
p_c^{\cycle}(k,d)
\le p_c^{\cycle}(k,k+1)
\le1-p_c^{\fin}(k+1)
\le1-p_c^{\bond}(k+1)<1.
\]
Consequently, for any $1\le k\le d-1$,
\[
0<p_c^{\face}(k,d)\le p_c^{\core}(k,d)\le p_c^{\cycle}(k,d)<1.
\]

\paragraph{Shielded percolation.}
In the bond percolation model on $\L^d$, a vertex $x\in\Z^d$ is said to be \emph{shielded} if all edges incident to $x$ are closed.
We say that \emph{shielded percolation} occurs if the subgraph of the hypercubic lattice $\L^d$ induced by the shielded vertices contains an infinite connected component.
When $p$ is small, shielded percolation occurs with positive probability; however, for sufficiently large $p$, it no longer occurs.
The \emph{critical probability} is defined by
\[
p_c^{\shield}(d):=\sup\{p\in[0,1]\mid\P_p(\text{shielded percolation occurs})>0\}.
\]
Since each shielded vertex is a finite open cluster, it follows that
\[
p_c^{\shield}(d)\le p_c^{\fin}(d).
\]
In~\cite{BDNS2020}, it was shown that
\begin{equation}\label{eq:bond-shield}
p_c^{\bond}(d)<p_c^{\shield}(d)\quad\text{for $d\ge10$.}
\end{equation}
Consequently, the strict inequality $p_c^{\bond}(d)<p_c^{\fin}(d)$ is known to hold for all $d\ge10$.
Combining the above inequalities, we obtain
\begin{equation}\label{eq:cycle-strict}
p_c^{\cycle}(d-1,d)<1-p_c^{\bond}(d)\quad\text{for $d\ge10$.}
\end{equation}
In particular, this shows that the duality relation $p_c^{\cycle}(d-1,d)+p_c^{\cycle}(1,d)=1$ fails to hold for $d\ge10$.

\section{Main Results}
\label{sec:main}
In this section, we define \emph{tube percolation} and state the main results.
Let $1\le k\le d-1$.
In what follows, a topological space that is homeomorphic to the $(k-1)$-dimensional sphere $\S^{k-1}$ is called a \emph{topological $(k-1)$-sphere}.
In particular, a topological $1$-sphere is called a \emph{loop}.
We first introduce the notion of \emph{tubes}.
\begin{df}[$k$-tube]
  Let $X\subset\R^d$.
  A continuous map $\gm\colon\S^{k-1}\times[a,b]\to X$~(with $-\infty<a<b<\infty$) is called a \emph{$k$-tube} in $X$ if $\gm$ is \emph{locally self-avoiding}; that is, for every $t\in[a,b]$, there exists a neighborhood $U\subset[a,b]$ of $t$ such that the restriction $\gm|_{\S^{k-1}\times U}$ is injective.
  If $\gm$ itself is injective, then $\gm$ is said to be \emph{self-avoiding}.
  The topological $(k-1)$-spheres $S=\gm(\S^{k-1}\times\{a\})$ and $S'=\gm(\S^{k-1}\times\{b\})$ are called, respectively, the \emph{initial sphere} and the \emph{terminal sphere} of $\gm$ (the \emph{initial loop} and the \emph{terminal loop} of $\gm$ when $k=2$).
  In this case, we further say that $S$ and $S'$ are \emph{joined} in $X$, and denote this by $S\overset X{\approx}S'$.
  (See Figure~\ref{fig:tube}.)
\end{df}

\begin{figure}[H]
  \centering
  \begin{minipage}[t]{0.3\textwidth}
    \centering
    \includegraphics[width=4.51cm]{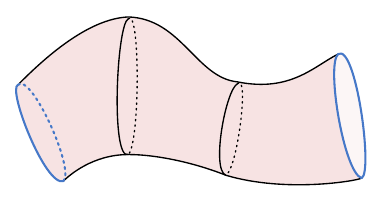}
    {\small (a)}
  \end{minipage}\hfill
  \begin{minipage}[t]{0.3\textwidth}
    \centering
    \includegraphics[width=4.51cm]{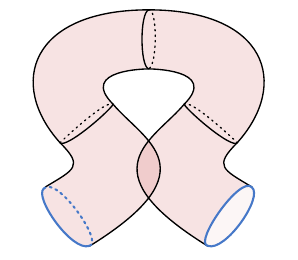}
    {\small (b)}
  \end{minipage}\hfill
  \begin{minipage}[t]{0.3\textwidth}
    \centering
    \includegraphics[width=4.51cm]{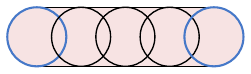}
    {\small (c)}
  \end{minipage}

  \caption{Illustration of continuous maps $\gm\colon\S^1\times[a,b]\to\R^3$.
  The blue loops in each panel represent $\gm(\S^1\times\{a\})$ and $\gm(\S^1\times\{b\})$.
  (a)~A self-avoiding $2$-tube.
  (b)~A $2$-tube that is not self-avoiding, as it has a self-intersection in the middle.
  (c)~A loop sliding horizontally.
  This is not a $2$-tube, as local self-avoidance is violated.}
  \label{fig:tube}
\end{figure}

We define the \emph{width} of a $k$-tube $\gm$ by
\[
\width(\gm):=\sup_{t\in[a,b]}\diam\gm(\S^{k-1}\times\{t\})<\infty.
\]
Here, the diameter $\diam(\cdot)$ is taken with respect to the $\ell^\infty$-norm on $\R^d$; the particular choice of norm does not affect the results qualitatively.
In the case $k=2$, informally speaking, the width measures the maximal stretching of a loop as it traverses the tube from the initial loop to the terminal loop.
We next define infinite $k$-tubes.
\begin{df}[Infinite $k$-tube]\label{df:inf_tube}
  Let $X\subset\R^d$.
  A continuous map $\gm\colon\S^{k-1}\times[0,\infty)\to X$ is called an \emph{infinite $k$-tube} in $X$ if the following conditions hold:
  \begin{itemize}
    \item (\emph{Local self-avoidance}).
    For every $t\in[0,\infty)$, there exists a neighborhood $U\subset[0,\infty)$ of $t$ such that the restriction $\gm|_{\S^{k-1}\times U}$ is injective.
    \item (\emph{Finite width}).
    The \emph{width}
    \[
    \width(\gm):=\sup_{t\in[0,\infty)}\diam\gm(\S^{k-1}\times\{t\})<\infty.
    \]
    \item (\emph{Escape to infinity}).
    It holds that
    \[
    \lim_{t\to\infty}d_\infty(o,\gm(\S^{k-1}\times\{t\}))=\infty,
    \]
    where $d_\infty(o,S):=\inf_{x\in S}\|x\|_\infty$ denotes the $\ell^\infty$-distance from the origin $o\in\R^d$ to $S$.
  \end{itemize}
  If $\gm$ itself is injective, then $\gm$ is said to be \emph{self-avoiding}.
  The topological $(k-1)$-sphere $S=\gm(\S^{k-1}\times\{0\})$ is called the \emph{initial sphere} of $\gm$ (the \emph{initial loop} of $\gm$ when $k=2$).
  In this case, we further say that $S$ is \emph{joined to infinity} in $X$, and denote this by $S\overset X{\approx}\infty$.
\end{df}

\medskip
We now define \emph{tube percolation} in the $k$-plaquette percolation model on $\R^d$, defined in Section~\ref{sec:prelim}.
In the present setting, we focus on the existence of an infinite $k$-tube in a configuration.
Let $\Lm_r:=[-r,r]^d\subset\R^d$ for $r\ge0$, and set $\Lm_o:=\Lm_{1/2}$.
\begin{df}[Tube percolation probability]
  Let $p\in[0,1]$.
  We define the \emph{tube percolation probability} $\theta^{\tube}(p)$ by
  \begin{equation}\label{eq:theta_tube}
  \theta^{\tube}(p)
 :=\P_p(\text{$\exists$ an infinite open $k$-tube whose initial sphere intersects $\Lm_o$}).
  \end{equation}
  Here, an infinite \emph{open} $k$-tube refers to an infinite $k$-tube in the geometric realization $|\eta|\subset\R^d$~(i.e., $X=|\eta|$ in Definition~\ref{df:inf_tube}).
  We refer to the event in~\eqref{eq:theta_tube} as the \emph{tube percolation event}, and denote it by $\cT$.
\end{df}
\noindent
In the case $k=2$, informally speaking, $\theta^{\tube}(p)$ represents the probability that a loop near the origin can ``escape'' to infinity while undergoing only bounded stretching (in a locally self-avoiding manner).

We define the \emph{critical probability} $p_c^{\tube}=p_c^{\tube}(k,d)$ by
\[
p_c^{\tube}:=\inf\{p\in[0,1]\mid\theta^{\tube}(p)>0\}.
\]
Since
\begin{align*}
&\{\text{$\exists$ an infinite open $k$-tube}\}\\
&=\bigcup_{x\in\Z^d}\{\text{$\exists$ an infinite open $k$-tube whose initial sphere intersects $x+\Lm_o$}\},
\end{align*}
it follows from the union bound and translation invariance that, for $p<p_c^{\tube}$, almost surely there exists no infinite open $k$-tube.
On the other hand, by Kolmogorov's zero-one law, the probability that there exists an infinite open $k$-tube is either zero or one; hence, for $p>p_c^{\tube}$, almost surely there exists an infinite open $k$-tube.
Consequently, the phase transition for \emph{tube percolation} occurs at $p_c^{\tube}$:
\begin{equation}\label{eq:tube_perc}
\P_p(\text{$\exists$ an infinite open $k$-tube})=
\begin{cases}
  0     &\text{if $p<p_c^{\tube}$,}\\
  1     &\text{if $p>p_c^{\tube}.$}
\end{cases}
\end{equation}
The behavior at criticality $p=p_c^{\tube}$ currently remains open.

\subsection*{Critical Probabilities}
Our first main result establishes a comparison between tube percolation and the other relevant percolation phenomena reviewed in Section~\ref{sec:prelim} and, in particular, shows that $p_c^{\tube}$ is nontrivial.
\begin{thm}\label{thm:comparison}
  Let $1\le k\le d-1$.
  Then
  \begin{equation}\label{eq:core-tube-1}
  p_c^{\core}(k,d)
  \le p_c^{\tube}(k,d)
  \le1-p_c^{\shield}(k+1).
  \end{equation}
  Moreover, $p_c^{\tube}(1,d)=p_c^{\bond}(d)$, and
  \begin{equation}\label{eq:tube-strict}
  p_c^{\tube}(k,d)
  <1-p_c^{\bond}(k+1)\quad\text{for $k\ge9$.}
  \end{equation}
\end{thm}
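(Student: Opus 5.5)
The proof splits into four parts: the lower bound via cores, the upper bound via the dual model, the case $k=1$, and the strict inequality. Throughout, I write $\gm$ for an infinite open $k$-tube, fix an $\eta$ on which it exists, and work in the geometric realization $|\eta|$.

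\emph{Lower bound $p_c^{\core}\le p_c^{\tube}$.} It suffices to show that on $\cT$ there is an infinite face-connected open core. Let $\cQ$ be the set of open $k$-plaquettes whose relative interior meets $\im\gm$. The points of $\im\gm$ off the $(k-1)$-skeleton lie in these interiors, and a locally injective image of a $k$-manifold cannot lie in the $(k-1)$-skeleton (invariance of domain). Hence $\cQ$ is nonempty. It is infinite because of finite width and escape to infinity, and face-connected because $\im\gm$ is connected.

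To get a core, I take a $(k-1)$-plaquette $P$ in $\bigcup\cQ$ and a point $x$ in its interior with $x=\gm(\theta,t)$ and $t>0$. A neighbourhood of $(\theta,t)$ is a $k$-disk mapped injectively into the cubical set. Local homology at $x$ then forces the disk to leave $x$ through at least two $k$-plaquettes containing $P$, since a single half-plaquette would make $x$ a boundary point. Thus degree-one $(k-1)$-faces of $\bigcup\cQ$ can only occur near the initial sphere, a bounded region.

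I then prune $\cQ$ by elementary collapses and take the surviving infinite face-connected piece. The main obstacle of this part is showing that the pruning cannot propagate to infinity. My first attempt is a Borel--Moore homology argument: the fundamental class of $\gm$ restricted to $t\ge T$, relative to its boundary sphere, is not killed by collapses originating from a bounded region.

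\emph{Upper bound $p_c^{\tube}\le1-p_c^{\shield}(k+1)$.} Restricting to a coordinate subspace $\R^{k+1}$ shows $p_c^{\tube}(k,d)\le p_c^{\tube}(k,k+1)$, so I may take $d=k+1$. In the dual bond model, which has parameter $1-p$, a dual vertex is shielded exactly when all $2(k+1)$ facets of the corresponding unit $(k+1)$-cube are open. For $p>1-p_c^{\shield}(k+1)$ there is, with positive probability, an infinite self-avoiding path of adjacent shielded cubes $c_0,c_1,\dots$. Consecutive cubes $c_i,c_{i+1}$ share a facet $F_i$.

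In each $F_i$ I pick a small $k$-disk $D_i$ with boundary sphere $S_i$, and build a tube from $S_{i-1}$ to $S_i$ inside $\partial c_i$. This works because $\partial c_i$ minus two disjoint open disks is homeomorphic to $\S^{k-1}\times[0,1]$. Concatenating these pieces gives a map of bounded width that escapes to infinity, since the path is self-avoiding.

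The delicate point is local self-avoidance at the junctions $S_i$. There, the pieces in $\partial c_i$ and $\partial c_{i+1}$ could both approach $S_i$ through $F_i\setminus D_i$ and fold back. I would fix this by choosing, on alternate sides, the complementary annulus: let one piece approach $S_i$ through $D_i$ rather than through $F_i\setminus D_i$, which uses that $F_i$ is open. I expect this to need a short case check, depending on whether $F_{i-1}$ and $F_i$ are opposite or adjacent facets.

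\emph{The case $k=1$.} Here $p_c^{\core}(1,d)=p_c^{\bond}(d)$ by \eqref{eq:all-bond_1}, which gives $p_c^{\bond}(d)\le p_c^{\tube}(1,d)$. Conversely, for $p>p_c^{\bond}(d)$ there is with positive probability an infinite self-avoiding open path $\pi$ from near the origin. Set $\gm(1,t)=\pi(t)$ and $\gm(-1,t)=\pi(t+1)$. The two points of $\S^0$ are sent to points at distance at most $1$, so the width is finite. Local injectivity follows from injectivity of $\pi$, and escape to infinity from self-avoidance. Hence $p_c^{\tube}(1,d)\le p_c^{\bond}(d)$.

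\emph{The strict inequality \eqref{eq:tube-strict}.} For $k\ge9$ we have $k+1\ge10$, so \eqref{eq:bond-shield} gives $p_c^{\bond}(k+1)<p_c^{\shield}(k+1)$. Combined with \eqref{eq:core-tube-1} this yields $p_c^{\tube}(k,d)\le1-p_c^{\shield}(k+1)<1-p_c^{\bond}(k+1)$.
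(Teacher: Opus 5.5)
\textbf{Main gap: the lower bound $p_c^{\core}\le p_c^{\tube}$.} You reduce to the deterministic claim that $\cT$ implies an infinite face-connected open core. That claim is false, and pruning by collapses is exactly what destroys a tube. Take $k=2$, $d=3$, and let the open plaquettes be exactly those in $\del([0,1]^2)\times[0,\infty)$. The horizontal slices form an infinite self-avoiding open tube of width $1$ whose initial loop contains $o$, so $\cT$ occurs. Yet no nonempty collection of open plaquettes is a core: in any such collection, a plaquette of minimal height has its bottom edge in only one plaquette of the collection.

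So the pruning does propagate to infinity: a half-infinite tube collapses completely, ring by ring, starting from its free initial sphere. The relative Borel--Moore class gives no obstruction, because the collapses start precisely at the boundary it is relative to.

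The missing idea is to cap the tube off rather than prune it. This works only with positive probability, which is all the definition of $p_c^{\core}$ requires. Choose $n$ with $\P_p(\text{some infinite open }k\text{-tube has initial sphere in }\Lm_n)>0$. This event and $\{\text{all of }\cK_k^d(\Lm_n)\text{ open}\}$ are both increasing, so by FKG they occur together with positive probability. On that event:
\begin{itemize}
\item Your local-homology argument handles every $(k-1)$-plaquette not contained in $\Lm_n$. You need one addition: a $k$-plaquette not contained in $\Lm_n$ whose relative interior meets $\im\gm$ lies entirely in $\im\gm$, since that intersection is open by invariance of domain and closed because $\gm$ is proper.
\item Each $(k-1)$-plaquette in $\Lm_n$ lies in at least $d-k+1\ge2$ plaquettes of $\cK_k^d(\Lm_n)$.
\end{itemize}
Hence $\cQ\cup\cK_k^d(\Lm_n)$ is an infinite face-connected open core. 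This is the paper's argument.

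\textbf{Second gap: your construction for the upper bound.} Your overall strategy here is the paper's, but the explicit construction cannot work, and the proposed repair is not a case check. Near $\mathrm{relint}\,F_i$, the realization $|\eta|$ is just the $k$-manifold $F_i$. So a locally self-avoiding tube passing through $S_i=\del D_i$ must lie on opposite sides of $S_i$ in $F_i$ at times just before and just after. By Jordan--Brouwer separation, the spheres on the $D_i$ side are nested inside $D_i$ as long as they stay in $\mathrm{relint}\,F_i$, hence forever. If that side is the future, the tube is trapped in $D_i$ and cannot escape to infinity. If it is the past, the initial sphere lies in $D_i$. So no choice of sides makes $S_i$ a junction.

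The junctions must instead sit where the tube passes from facets of one cube to different facets of the next, i.e.\ on the outer boundary of the union of consecutive cubes, never on the shared facets. For a straight step, use the side-wall slices $\{s\}\times\del[0,1]^k$, which cross $\del F_i$ from facets of $c_i$ to facets of $c_{i+1}$. At a turn, sweep along the outer boundary of the L-shaped union $c_{i-1}\cup c_i\cup c_{i+1}$.

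Your treatment of $k=1$ and of the strict inequality matches the paper.
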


\begin{rem}\label{rem:s-tube}
  At present, it remains open whether the inequality $p_c^{\cycle}(k,d)\le p_c^{\tube}(k,d)$ holds, mainly due to the fact that infinite tubes may have self-intersections; see Section~\ref{sec:critical} for a discussion of the underlying difficulties.
  One may instead introduce a modified critical probability $\widetilde p_c^{\,\,\tube}(k,d)$, defined in terms of the emergence of an infinite self-avoiding open $k$-tube.
  Since this notion imposes an additional constraint, it is immediate that $p_c^{\tube}(k,d)\le\widetilde p_c^{\,\,\tube}(k,d)$.
  With this modified definition, the inequality
  \begin{equation}\label{eq:s-tube}
  p_c^{\cycle}(k,d)
  \le\widetilde p_c^{\,\,\tube}(k,d)<1
  \end{equation}
  indeed holds.
  Moreover, in the case $k=d-1$, one has
  \begin{equation}\label{eq:s-tube_d-1}
  p_c^{\cycle}(d-1,d)
  \le1-p_c^{\fin}(d)
  \le\widetilde p_c^{\,\,\tube}(d-1,d);
  \end{equation}
  see Section~\ref{sec:critical} for further details.
  However, the requirement of self-avoiding is too restrictive for our purposes, particularly in the analysis of the sharp threshold for the existence of box-crossing tubes~(Theorem~\ref{thm:box-crossing}).
\end{rem}

\subsection*{An Analogue of the Uniqueness of the Infinite Cluster}
In classical bond percolation (the $1$-plaquette percolation model on $\R^d$), the uniqueness of the infinite cluster states that for every $p\in[0,1]$, there exists at most one infinite open cluster $\P_p$-almost surely.
This may be rephrased as follows.
For every $p\in[0,1]$, the following holds $\P_p$-almost surely:
If
\[
\gm^-\colon(-\infty,0]\to\R^d
\quad\text{and}\quad
\gm^+\colon[0,\infty)\to\R^d
\]
are backward and forward infinite open paths, respectively, that is, $\gm^-$ and $\gm^+$ are continuous maps whose images are contained in the geometric realization $|\eta|\subset\R^d$ and satisfy
\[
\lim_{t\to-\infty}\|\gm^-(t)\|_\infty=\infty
\quad\text{and}\quad
\lim_{t\to\infty}\|\gm^+(t)\|_\infty=\infty,
\]
then there exists a bi-infinite open path $\gm\colon\R\to\R^d$ such that
\[
\gm|_{(-\infty,-T]}=\gm^-|_{(-\infty,-T]}
\quad\text{and}\quad
\gm|_{[T,\infty)}=\gm^+|_{[T,\infty)}
\]
for some $T\ge0$.
In short, the tails of the infinite open paths $\gm^-$ and $\gm^+$ can be combined to form a single bi-infinite open path $\gm$.

\smallskip
We establish a tubular analogue of the above statement.
An infinite $k$-tube $\gm\colon\S^{k-1}\times[0,\infty)\to X$ as in Definition~\ref{df:inf_tube} will, when it is necessary to distinguish directions, be referred to as a \emph{forward infinite $k$-tube} in $X$.
Analogously, a \emph{backward infinite $k$-tube} in $X$ is a continuous map $\gm\colon\S^{k-1}\times(-\infty,0]\to X$ defined by replacing $[0,\infty)$ and $\lim_{t\to\infty}$ in Definition~\ref{df:inf_tube} with $(-\infty,0]$ and $\lim_{t\to-\infty}$, respectively.
When $[0,\infty)$ and $\lim_{t\to\infty}$ in Definition~\ref{df:inf_tube} are replaced with $\R$ and $\lim_{t\to\pm\infty}$, respectively, we refer to such a continuous map $\gm\colon\S^{k-1}\times\R\to X$ as a \emph{bi-infinite $k$-tube} in $X$.

The following theorem can be regarded as an analogue of the uniqueness of the infinite cluster in classical bond percolation.
\begin{thm}\label{thm:uniqueness}
  Let $1\le k\le d-1$.
  Then for every $p\in[0,1]$, the following holds $\P_p$-almost surely$:$
  If
  \[
  \gm^-\colon\S^{k-1}\times(-\infty,0]\to\R^d
  \quad\text{and}\quad
  \gm^+\colon\S^{k-1}\times[0,\infty)\to\R^d
  \]
  are backward and forward infinite open $k$-tubes, respectively, then there exists a bi-infinite open $k$-tube $\gm\colon\S^{k-1}\times\R\to\R^d$ such that
  \[
  \gm|_{\S^{k-1}\times(-\infty,-T]}=\gm^-|_{\S^{k-1}\times(-\infty,-T]}
  \quad\text{and}\quad
  \gm|_{\S^{k-1}\times[T,\infty)}=\gm^+|_{\S^{k-1}\times[T,\infty)}
  \]
  for some $T\ge0$.
\end{thm}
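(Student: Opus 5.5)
The plan is to reduce the statement to a Burton--Keane type uniqueness theorem for an auxiliary translation-invariant graph whose vertices are \emph{collars}: a topological $(k-1)$-sphere $S\subset|\cK_k^d|$ together with a short open $k$-tube segment passing through it. Two collars are adjacent if they are joined by a short open $k$-tube that starts along the trajectory of the first and ends along the trajectory of the second. The point of recording the local trajectory is that it makes concatenation legal. If $\gm_1$ ends in collar $C$ and $\gm_2$ starts in $C$ with the same local trajectory, then the glued map is again locally self-avoiding, because near the junction it coincides with the trajectory of $C$. Consequently tube-connectivity through collars is transitive, which ordinary tube-connectedness is not.

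\textbf{Step 1 (discretization).} Fix a width bound $W\in\N$ and restrict to cubical collars: spheres made of $(k-1)$-plaquettes with $\ell^\infty$-diameter at most $W$, with local trajectories of bounded size. Up to translation there are only finitely many such collars. Let $G_W(\eta)$ be the graph on these collars, where the adjacency described above is witnessed by open plaquettes within distance $O(W)$. The key step is to show that any forward infinite open $k$-tube $\gm^+$ of width at most $W$ can be replaced, from some time on, by a tube passing through an infinite sequence of adjacent collars of $G_{W'}$ that escapes to infinity, for some $W'=W'(W,k,d)$.

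To do this, approximate each slice $\gm^+(\S^{k-1}\times\{t\})$ by a cubical sphere, using a cellular approximation inside the cubical complex $|\eta|$. Then use local self-avoidance on a finite partition $t_0<t_1<\cdots$ of the time axis to obtain the local trajectories. The replacement must agree with $\gm^+$ on a tail, so that the conclusion $\gm|_{\S^{k-1}\times[T,\infty)}=\gm^+|_{\S^{k-1}\times[T,\infty)}$ survives. The same is done for $\gm^-$.

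\textbf{Step 2 (uniqueness in $G_W$).} Show that for each $W$, $G_W(\eta)$ has $\P_p$-almost surely at most one infinite component. This follows the Burton--Keane scheme. The law of $G_W$ is translation invariant and ergodic, since $G_W$ is a finite-range factor of the i.i.d.\ field. It also has finite energy: for $p\in(0,1)$, opening all plaquettes in a box has positive conditional probability. Hence the number of infinite components is a.s.\ a constant in $\{0,1,\infty\}$. To exclude $1<N<\infty$, merge components by opening a large box. To exclude $N=\infty$, opening a box met by three infinite components creates a trifurcation collar with positive density, and this contradicts amenability by the usual counting of trifurcations in $\Lm_n$. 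The cases $p\in\{0,1\}$ are handled directly. At $p=0$ no $k$-tube exists in the $(k-1)$-dimensional set $|\eta|$, by invariance of domain. At $p=1$ any two collars are joined in $|\cK_k^d|$, since there is room to route a tube around.

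\textbf{Step 3 (conclusion).} Intersect over $W\in\N$, noting that the graphs $G_W$ are monotone in $W$. Given $\gm^\pm$, use Step 1 to get collars $C^\pm$ lying on their tails in infinite components of $G_{W'}$. By Step 2 these components coincide, so there is a finite chain of adjacent collars from $C^-$ to $C^+$. Concatenating $\gm^-$ up to $C^-$, the chain, and $\gm^+$ from $C^+$ gives the required bi-infinite open $k$-tube, with the same tails and finite width. I expect the main obstacles to be two. The first is Step 1, turning an arbitrary continuous, possibly self-intersecting tube into a collar chain without losing local self-avoidance or the exact tail. The second is the trifurcation construction in Step 2, which requires building, inside a fully open box, a ``pair-of-pants''-type branching of tubes that is locally self-avoiding at the junction collars.
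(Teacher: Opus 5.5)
There is a genuine gap, and it sits at your transitivity claim. Adjacency of collars as you define it (a tube starting along the trajectory of the first collar and ending along that of the second) is a \emph{directed} relation. A collar carries an orientation: the side of the base sphere from which a tube arrives, and the side into which it leaves. Gluing is legal only along directed edges.

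Suppose a chain uses a witnessing tube backwards. Then the glued tube leaves the collar $C_i$ through the same incident plaquettes from which it arrived. Near the junction, $\gm(p,t_i-h)$ and $\gm(p,t_i+h)$ land in the same half-collar; for equal representatives, $\gm(p,t_i-h)=\gm(p,t_i+h)$. This is a fold, and it violates local self-avoidance. Reversing the orientation of a collar would need a closed ``U-turn'' circuit of open tubes, which a random configuration need not contain.

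So chaining through collars is transitive but not symmetric, and there are no clusters for Burton--Keane to act on. The argument fails either way you read it:
\begin{itemize}
\item If you symmetrize (undirected graph, or unoriented collars), Step~2 may go through. But the chain from $C^-$ to $C^+$ in Step~3 is then only an undirected chain and cannot be concatenated into a tube.
\item If you keep the edges directed, ``at most one infinite component'' is not the relevant statement. The tail of $\gm^+$ is an out-ray whose vertices need not lie in any infinite strongly connected component, and weak components again give only undirected chains.
\end{itemize}

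The paper keeps $G_w$ directed, with vertices the equivalence classes of oriented collars. It replaces uniqueness of the infinite cluster by the statement of Lemma~\ref{lem:uniqueness}: almost surely, every in-ray and every out-ray can be joined by a double ray sharing both tails. The Burton--Keane scheme is then run on ends of directed graphs:
\begin{itemize}
\item $M$ is the maximal number of ``undesired'' (in-end, out-end) pairs with pairwise incomparable ends, and ergodicity makes $M$ a.s.\ constant.
\item A finite positive value of $M$ is excluded by opening a box met by the relevant rays.
\item $M=\infty$ is excluded via trifurcations. These are produced from three out-rays with pairwise incomparable out-ends, and counted in the undirected graph $G_w^{\text{und}}$.
\end{itemize}

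Your Steps~1 and~3 (discretization with tail-preserving reparametrization, then concatenation along a directed path) agree with the paper's reduction. What is missing is this directed version of Step~2, which carries the actual content of the theorem.
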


\subsection*{Sharp Threshold for Tubular One-Arm Events}
We next introduce finite-volume events, depending only on the states of finitely many $k$-plaquettes, that approximate the tube percolation event $\cT$.

\begin{df}[Tubular one-arm probability]
  Let $p\in[0,1]$.
  For a width parameter $w>0$ and a scale parameter $n\in\N$, we define the \emph{tubular one-arm probability} $\theta_{w,n}^{\tube}(p)$ by
  \begin{equation}\label{eq:1-arm_event}
  \theta_{w,n}^{\tube}(p)
 :=\P_p\left(
  \begin{array}{l}
    \text{$\exists$ an open $k$-tube of width at most $w$ whose initial sphere}\\
    \text{intersects $\Lm_o$ and terminal sphere intersects $\del\Lm_n$}
  \end{array}
  \right).
  \end{equation}
  By convention, when $n\le1/2+w$, we regard the event in~\eqref{eq:1-arm_event} as occurring trivially, and hence set $\theta_{w,n}^{\tube}(p):=1$.
  Indeed, when $n\le1/2+w$, there exists a single topological $(k-1)$-sphere intersecting both $\Lm_o$ and $\del\Lm_n$, which may be viewed as a trivial $k$-tube whose initial and terminal spheres coincide.
  We refer to the event in~\eqref{eq:1-arm_event} as a \emph{tubular one-arm event} with width at most $w$, and denote it by $\cT_{w,n}$~(see Figure~\ref{fig:one-arm}).
\end{df}
In the case $k=2$, informally speaking, $\theta_{w,n}^{\tube}(p)$ represents the probability that a loop near the origin can reach $\del\Lm_n$ while undergoing only limited stretching.
Since every $k$-tube is a continuous map, the monotonicity of the tubular one-arm event $\cT_{w,n}$ in $n$ follows immediately from the intermediate value theorem.
Also, the monotonicity of $\cT_{w,n}$ in $w$ is obvious by definition.
Furthermore, it is straightforward to verify that the tubular one-arm event $\cT_{w,n}$ depends only on the configuration inside $\Lm_n$.

\begin{figure}[H]
  \centering
  \includegraphics[width=7.5cm]{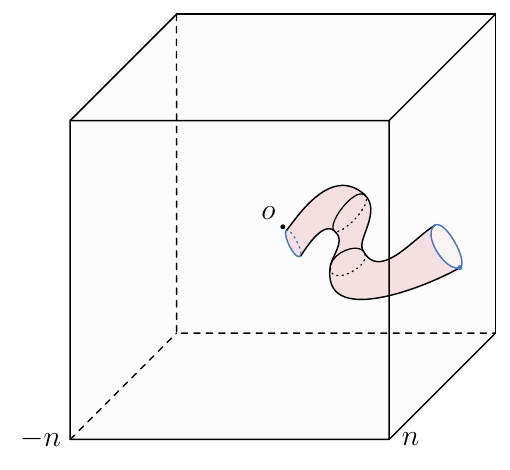}
  \caption{Illustration of the tubular one-arm event for $k=2$ and $d=3$.
  The blue loops represent the initial and terminal loops.
  The loop near the origin moves through the box $\Lm_n$ and reaches its boundary (at the blue point) while undergoing only limited stretching.}
  \label{fig:one-arm}
\end{figure}

The following proposition shows that the tubular one-arm event $\cT_{w,n}$ provides a suitable approximation to the tube percolation event $\cT$.
\begin{prop}\label{prop:approx}
  It holds that
  \begin{equation}\label{eq:approx}
  \cT=\bigcup_{w>0}\bigcap_{n\in\N}\cT_{w,n}.
  \end{equation}
  In particular,
  \[
  \theta^{\tube}(p)
  =\lim_{w\to\infty}\lim_{n\to\infty}\theta_{w,n}^{\tube}(p).
  \]
\end{prop}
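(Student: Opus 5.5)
The plan is to prove the two inclusions in \eqref{eq:approx} separately. The identity for $\theta^{\tube}$ then follows by continuity of measure: $\cT_{w,n}$ is decreasing in $n$ and increasing in $w$, so the intersection over $n$ and the union over $w$ become limits of probabilities.

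\emph{Inclusion $\cT\subset\bigcup_w\bigcap_n\cT_{w,n}$.} Let $\gm\colon\S^{k-1}\times[0,\infty)\to|\eta|$ be an infinite open $k$-tube whose initial sphere meets $\Lm_o$, and put $w:=\width(\gm)+1$. Write $S_t:=\gm(\S^{k-1}\times\{t\})$.
\begin{itemize}
\item For $n\le 1/2+w$ the event $\cT_{w,n}$ holds by convention.
\item Otherwise, consider $f(t):=d_\infty(o,S_t)$. Since $\gm$ is uniformly continuous on $\S^{k-1}\times[0,T]$ for each $T$, the function $f$ is continuous. We have $f(0)\le 1/2<n$, and $f(t)\to\infty$ by escape to infinity.
\item Let $t_n$ be the first time with $f(t_n)=n$. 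Then $S_{t_n}$ meets $\Lm_n$ at a point of norm exactly $n$, i.e.\ it meets $\del\Lm_n$.
\item The restriction $\gm|_{\S^{k-1}\times[0,t_n]}$ is locally self-avoiding and has width at most $w$, so $\cT_{w,n}$ occurs.
\end{itemize}

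\emph{Inclusion $\bigcup_w\bigcap_n\cT_{w,n}\subset\cT$.} Fix $w$ and, for each $n$, a finite open tube $\gm_n$ of width at most $w$ from a sphere meeting $\Lm_o$ to a sphere meeting $\del\Lm_n$. The goal is a compactness (K\"onig-lemma) argument on the fixed configuration $\eta$. The obstruction is that tubes cannot be naively concatenated: gluing $\gm$ and $\gm'$ along a common sphere may violate local self-avoidance at the junction, for instance if the tube folds back.

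I would therefore use collars: a sphere $S_t$ together with the short piece $\gm|_{\S^{k-1}\times[t-\dl,t+\dl]}$ through it. Two tube segments that share a collar, in the sense of agreeing on a whole neighbourhood of the junction, can be glued while keeping local injectivity.

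The key lemma to establish is a finiteness statement. For each $m$, the collars of width $\le w$ lying in $\Lm_{m+w+1}$ inside $|\eta|$ split into finitely many classes such that any two collars in the same class are joined by an open tube of width $\le Cw$ (for a constant $C$). The joining tube should stay in a bounded neighbourhood and attach to both collars compatibly with their orientation. I would prove this by cellular/simplicial approximation inside the finite cubical complex $|\eta|\cap\Lm_{m+w+1}$: push each collar, by a small locally injective homotopy inside $|\eta|$, onto a combinatorial sphere in a fine subdivision. There are finitely many combinatorial spheres, and the homotopy itself supplies the connecting tube. This is the main obstacle. The delicate point is ensuring the approximating homotopy is locally self-avoiding in $|\eta|$, where $|\eta|$ is not a manifold near branching $(k-1)$-plaquettes.

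Granting the lemma, fix $m=1,2,\dots$ and consider the first time $\gm_n$ meets $\del\Lm_m$, which exists by the intermediate-value argument above. Record the class of its collar there. By a diagonal extraction there is a subsequence along which, for every $m$, the class at level $m$ is eventually constant, say $c_m$.

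Build the infinite tube inductively. Given a tube ending at a collar of class $c_m$, pick $n$ in the subsequence whose level-$m$ and level-$(m+1)$ classes are $c_m$ and $c_{m+1}$. Splice in, in order:
\begin{itemize}
\item the connecting tube from the lemma, which ends at the level-$m$ collar of $\gm_n$;
\item the segment of $\gm_n$ from level $m$ to level $m+1$;
\item the connecting tube to a fixed representative collar of class $c_{m+1}$.
\end{itemize}
The resulting map is locally self-avoiding and has width at most $Cw$. It escapes to infinity provided the connecting tubes at stage $m$ stay near $\del\Lm_m$ and the spliced segments eventually avoid $\Lm_{m/2}$; if necessary, arrange this by recording classes only for collars beyond the previous level. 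The constant $C$ is harmless because the union is over all $w$. This yields $\cT$.
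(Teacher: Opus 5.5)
Your first inclusion and the passage from \eqref{eq:approx} to the formula for $\theta^{\tube}$ are fine, and your broad plan (collars to control gluing, finitely many classes in bounded regions, a compactness extraction) matches the paper's. The gap is the key lemma your splicing rests on. You ask for finitely many classes such that \emph{any} two collars in a class are joined by an open tube that leaves the first and arrives at the second compatibly with their orientations. Such a partition does not exist in general, because locally self-avoiding tubes cannot backtrack. Take $k=2$, $d=3$, and let $\eta$ be open exactly on the prism $\del([0,1]^2)\times\R$. Every open $2$-tube lies on this cylinder and sweeps it monotonically. Among the uncountably many upward collars at the loops $\del[0,1]^2\times\{h\}$, $h\in[0,1]$, some class must contain two heights $h_1<h_2$. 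Yet no open tube leaves the loop at height $h_2$ upwards and arrives at the loop at height $h_1$ from below. Your splicing needs connections both into $\rho_{c_m}$ (from the level-$m$ collar of one $\gm_{n'}$) and out of it (to that of another $\gm_n$), so it breaks. Pushing collars onto combinatorial spheres does not rescue the lemma. A connecting tube through a common combinatorial sphere $D$ must reach $D$ moving forward from the first collar and leave it moving forward into the second. So $D$ must lie ahead of one base sphere and behind the other, which no symmetric definition of classes can guarantee.

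The missing idea is to make the junction spheres literally coincide, rather than connecting distinct spheres by auxiliary tubes. The paper discretizes each intermediate sphere $S_i$ along the tube itself. Every point is flowed backward along $\gm$ until it exits the smallest cubical set $X_i\supset S_i$, and $\gm$ is reparametrized so that the resulting discrete sphere $\widetilde S_i$ is a level set. The separation $d_\infty(S_i,S_{i-1})\ge2$ ensures this backward flow stops after time $t_{i-1}$. The vertices of the paper's graph are equivalence classes of collars based at discrete spheres, and there are finitely many with base in any bounded region. Two tube pieces meeting the same discrete sphere with equivalent collars are glued directly, incoming half of one to outgoing half of the other, with no connecting tube. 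Compactness is then pigeonhole plus K\H onig's lemma on the locally finite digraph $G_{4w+12}$. Since the resulting ray has distinct vertices and each edge tube has bounded diameter by the locality condition, escape to infinity is automatic. This also settles your other soft spot. With first-hitting levels, the piece of $\gm_n$ between its first visits to $\del\Lm_m$ and $\del\Lm_{m+1}$ may return arbitrarily close to the origin. A diagonal-extraction version should therefore cut at suitably spaced last-exit times from $\Lm_m$ instead.
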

\begin{rem}\label{rem:transitivity}
  Unlike in classical bond percolation, approximating the percolation event by one-arm events is considerably more delicate.
  The main difficulty arises from the fact that, in contrast to ordinary paths in bond percolation, the class of $k$-tubes is not closed under concatenation: in general, two $k$-tubes cannot be concatenated to form a single $k$-tube, since the local self-avoidance condition may be violated at the junction.
  In other words, the join relation $\approx$ fails to be transitive.
  We resolve this difficulty in Section~\ref{sec:approx} by introducing an associated directed graph on topological spheres together with a local trajectory that passes through them.
\end{rem}

\medskip
Our next result shows that the tubular one-arm event exhibits a sharp threshold at $p_c^{\tube}$: below $p_c^{\tube}$, the tubular one-arm probability $\theta_{w,n}^{\tube}(p)$ decays exponentially in $n$, whereas above $p_c^{\tube}$, the tube percolation probability $\theta^{\tube}(p)$ admits a mean-field-type lower bound.
\begin{thm}\label{thm:sharp_1-arm}
  Let $1\le k\le d-1$.
  Then the following statements hold$:$
  \begin{enumerate}
    \item If $p<p_c^{\tube}$, then for every $w>0$, there exists a constant $c_{p,w}>0$ such that
    \[
    \theta_{w,n}^{\tube}(p)\le\exp(-c_{p,w}n)
    \]
    for any integer $n>1/2+w$$;$
    \item There exists a constant $\kappa>0$ such that
    \[
    \theta^{\tube}(p)\ge\kappa(p-p_c^{\tube})
    \]
    for all $p>p_c^{\tube}$.
  \end{enumerate}
\end{thm}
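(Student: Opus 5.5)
The plan follows the Duminil-Copin--Raoufi--Tassion strategy for sharp thresholds via the OSSS inequality, adapted to tubes. Fix $w>0$. For each $n$ we set $\theta_n(p):=\theta^{\tube}_{w,n}(p)$ and aim for a differential inequality of the form
\begin{equation*}
\theta_n'(p)\ \ge\ \frac{c\,n}{\Sigma_n(p)}\,\theta_n(p)\bigl(1-\theta_n(p)\bigr),\qquad \Sigma_n(p):=\sum_{m=0}^{n-1}\theta_{w',m}^{\tube}(p),
\end{equation*}
for some enlarged width $w'\ge w$ depending only on $w$, $k$, $d$. Once this is in hand, the standard lemma of Duminil-Copin--Raoufi--Tassion applies, with the width-monotonicity bookkeeping handled by running the argument along a sequence of widths. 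It yields a dichotomy: either $\theta_{w,n}$ decays exponentially, or $\lim_n\theta_{w,n}(p)\ge\kappa(p-p_1)$ for all $p>p_1$. Proposition~\ref{prop:approx} then identifies the threshold $p_1$ with $p_c^{\tube}$, since $\lim_w\lim_n\theta_{w,n}=\theta^{\tube}$. This gives both items. For item 2 we need $\kappa$ uniform in $w$, and we take the limit $w\to\infty$ using \eqref{eq:approx}.

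To derive the inequality, we first use Russo's formula: $\theta_n'(p)=\sum_Q \P_p(Q\text{ pivotal})$ over $k$-plaquettes $Q\subset\Lm_n$, with pivotality measured relative to $\cT_{w,n}$. The OSSS inequality (for $p\in(0,1)$, up to the factor $p(1-p)$) gives $\theta_n(1-\theta_n)\le \sum_Q \delta_Q(A)\Inf_Q$ for any algorithm $A$ determining $\cT_{w,n}$. Averaging over a family of algorithms $A_m$, $1\le m\le n$, then yields the displayed bound, provided
\[
\delta_Q(A_m)\le \theta^{\tube}_{w',\,|\,\|x_Q\|_\infty-m|-C}(p),
\]
where $x_Q$ is the center of $Q$ and $C$ depends on $w$. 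Summing over $m$ gives $\max_Q\sum_m\delta_Q(A_m)\le 2\Sigma_n$.

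The algorithm $A_m$ should explore from $\partial\Lm_m$ rather than from the origin. Its first step is to enumerate the finitely many (up to discretization) topological $(k-1)$-spheres of diameter at most $w$ in $|\eta|$ that meet $\partial\Lm_m$, revealing the plaquettes they use. It then grows the family of spheres joined to these by open tubes of width at most $w$, inside $\Lm_n$, revealing only plaquettes within distance $w$ of spheres already reached. Since $\cT_{w,n}$ occurs iff the tube from $\Lm_o$ to $\partial\Lm_n$ crosses $\partial\Lm_m$ (by the intermediate value theorem), the algorithm can decide the event by examining the reached spheres. What remains is to check whether one of them is joined to $\Lm_o$ and, simultaneously, to $\partial\Lm_n$ by a single tube. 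Remark~\ref{rem:transitivity} warns that joining is not transitive, so at this point we use the collar structure of Section~\ref{sec:approx}: the exploration tracks collars (spheres together with a local trajectory), so that pieces explored in either direction from $\partial\Lm_m$ glue into a genuine tube. A plaquette $Q$ is revealed only if some sphere near $Q$ is joined to a sphere meeting $\partial\Lm_m$ with width at most $w$. Translating this to a one-arm event centered at $Q$ costs an enlargement $w\to w'$ and a box $x_Q+\Lm_o$ versus a sphere near $Q$, which gives the revealment bound.

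The main obstacle is exactly this step: designing the collar-based exploration so that the output is provably determined by revealed plaquettes despite the non-transitivity, while keeping revealment controlled by one-arm probabilities of only slightly larger width. A smaller technical issue is that the DRT lemma is applied with $w'\ne w$. The approach we would try is to prove the inequality for the sequence $w_j=w_0+jC$ and use monotonicity in $w$, so that the exponential-decay constant may depend on $w$ (as the statement allows) while the linear lower bound is obtained with constants independent of $w$ before passing $w\to\infty$.
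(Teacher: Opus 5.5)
Your architecture (Russo plus OSSS averaged over explorations $\cA_m$ started from $\del\Lm_m$, the Duminil-Copin--Raoufi--Tassion lemma at fixed $w$, then $w\to\infty$ via Proposition~\ref{prop:approx}) is the paper's. Your first description of the exploration is essentially the paper's algorithm. The gap is the step you yourself flag as the main obstacle. You leave it unresolved, and the resolution you sketch breaks the argument.

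Tracking collars, or ``translating'' revealment to a one-arm event, at an enlarged width $w'$ gives only
\[
\frac{d}{dp}\theta^{\tube}_{w,n}(p)\ge c\,\theta^{\tube}_{w,n}(p)\Biggl(\frac1n\sum_{j=0}^{n-1}\theta^{\tube}_{w',j}(p)\Biggr)^{-1}.
\]
Through Lemma~\ref{lem:reach-join} this enlargement is $w'=2w+4$, not an additive shift. The inequality above is not of the self-referential form the lemma needs. Monotonicity in $w$ points the wrong way, since $\theta^{\tube}_{w',j}\ge\theta^{\tube}_{w,j}$. Running along widths $w_j$ only makes the estimate at $w_j$ depend on an a priori sublinearity or summability bound at $w_{j+1}$ and a larger $p$. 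That is an upward regress you give no way to terminate. So neither the exponential decay nor the linear lower bound follows as proposed.

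The missing observation is that non-transitivity never enters. OSSS only needs the revealed bits to \emph{determine} $f_{w,n}$; the algorithm does not have to glue anything.

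In the paper, $\cS_t$ consists of spheres joined by a \emph{single} open tube of width at most $w$, supported on queried plaquettes, to a sphere of diameter at most $w$ meeting $\del\Lm_m$. One then queries plaquettes meeting spheres of $\cS_{t-1}$.
\begin{itemize}
\item Every witness of $\cT_{w,n}$ passes through such a sphere.
\item Follow the witness in both directions from there, using that points of $|\cK_k^d|$ close to a compact sphere lie only in plaquettes meeting it. Once the exploration stops, every witness, in any configuration agreeing with $\eta$ on the revealed set, uses only revealed open plaquettes. Hence $f_{w,n}$ is determined.
\item A revealed $Q$ meets a sphere joined to $\del\Lm_m$ with width at most $w$, and that sphere meets $x+\Lm_o$ for some vertex $x$ of $Q$. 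This gives $\dl_p(Q,f_{w,n};\cA_m)\le\sum_{x\in Q\cap\Z^d}\theta^{\tube}_{w,|m-\|x\|_\infty|}(p)$ with the \emph{same} $w$.
\end{itemize}

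You also omit how to drop the factor $1-\theta^{\tube}_{w,n}$ while keeping $\kappa$ independent of $w$. The paper uses $1-\theta^{\tube}_{w,n}(p)\ge(1-p_{\max})^{\binom{2d}{k}}$ on $[0,p_{\max}]$. It chooses $p_{\max}$ once, using $p_c^{\tube}(w)\le p_c^{\tube}(1)<1$ for $w\ge1$.

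Finally, for fixed $w$ the lemma's threshold is $p_c^{\tube}(w)\ge p_c^{\tube}$, not $p_c^{\tube}$ itself. Item~1 follows from $p<p_c^{\tube}\le p_c^{\tube}(w)$, and item~2 from $p_c^{\tube}(w)\searrow p_c^{\tube}$.
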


\subsection*{Sharp Threshold for the Tubular Box-Crossing Property}
Our final result is a sharp threshold for the existence of a box-crossing open $k$-tube.
Let $L_n$ and $R_n$ denote the left and right faces of the box $\Lm_n$, respectively; that is,
\[
L_n:=\{-n\}\times[-n,n]^{d-1}\quad\text{and}\quad R_n:=\{n\}\times[-n,n]^{d-1}.
\]
For a width parameter $w>0$ and a scale parameter $n\in\N$, we define the \emph{tubular box-crossing event} $\cH_{w,n}$ by
\[
\cH_{w,n}:=\left\{
  \begin{array}{l}
    \text{$\exists$ an open $k$-tube in $\Lm_n$ of width at most $w$ whose initial sphere}\\
    \text{intersects $L_n$ and terminal sphere intersects $R_n$}
  \end{array}
  \right\}.
\]
The following theorem shows that the tubular box-crossing event $\cH_{w,n}$ exhibits a sharp threshold at $p_c^{\tube}$.
\begin{thm}\label{thm:box-crossing}
  Let $1\le k\le d-1$.
  Then the following statements hold$:$
  \begin{enumerate}
    \item If $p<p_c^{\tube}$, then for every $w>0$,
    \[
    \lim_{n\to\infty}\P_p(\cH_{w,n})=0;
    \]
    \item If $p>p_c^{\tube}$, then for all sufficiently large $w>0$ $($depending on $p$$)$,
    \[
    \lim_{n\to\infty}\P_p(\cH_{w,n})=1.
    \]
  \end{enumerate}
  In particular,
  \[  \lim_{w\to\infty}\lim_{n\to\infty}\P_p(\cH_{w,n})=
  \begin{cases}
    0     &\text{$p<p_c^{\tube}$,}\\
    1     &\text{$p>p_c^{\tube}$.}
  \end{cases}
  \]
\end{thm}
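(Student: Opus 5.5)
Part~1 is the easy direction and I will derive it from Theorem~\ref{thm:sharp_1-arm}(1) by a union bound. Suppose $\cH_{w,n}$ occurs, witnessed by a tube $\gm$ in $\Lm_n$ whose initial sphere $S$ meets $L_n$. Then $S$ meets $x+\Lm_o$ for some $x\in\Z^d$ with $x_1=-n$. Because $\gm$ must reach $R_n$, by the intermediate value theorem its spheres travel $\ell^\infty$-distance at least $2n-w-1$ from $x$. So the translated tubular one-arm event $x+\cT_{w,m}$ occurs with $m=\lfloor 2n-w-1\rfloor$; here I simply take the tube as it is, not cut off at the first exit. There are $O(n^{d-1})$ choices of $x$, and translation invariance gives
\[
\P_p(\cH_{w,n})\le Cn^{d-1}\exp(-c_{p,w}(2n-w-1)),
\]
which tends to zero as $n\to\infty$. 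A small point to check is that $\cT_{w,m}$ may be witnessed by any tube whose terminal sphere meets $\del\Lm_m$. Restricting $\gm$ to the first time its sphere meets $\del(x+\Lm_m)$ gives such a witness, and the restriction of a $k$-tube to a subinterval is still a $k$-tube.

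For part~2, fix $p>p_c^{\tube}$. Then $\theta^{\tube}(p)>0$, and by \eqref{eq:tube_perc} infinite open tubes exist almost surely. The plan follows the classical route: uniqueness plus the square-root (or Harris/FKG) trick, adapted to tubes. Choose $w_0$ with $\P_p(\exists$ an infinite open $k$-tube of width $\le w_0$ whose initial sphere meets $\Lm_o)>0$; this is possible by monotone limits in $w$. Let $A_n^{L}$ be the event that some sphere meeting $L_n$ starts an open tube of width $\le w_0$ that reaches distance far beyond $\Lm_n$, and let $A_n^{R}$ be the analogous event on the right. I would rather work with sub-boxes. Let $E_N$ be the event that $\Lm_N$ is met by an infinite tube of width $\le w_0$; then $\P_p(E_N)\to1$ by the zero-one law. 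By Theorem~\ref{thm:uniqueness} together with a compactness/finite-energy argument, the following holds with probability close to $1$ for $N$ large: any two infinite tubes of width $\le w_0$ meeting $\Lm_N$, one used backwards and one forwards, can be glued by tubes of width $\le w$ inside $\Lm_M$ for some $M=M(N)$ and some $w=w(w_0,p)$. The width bound $w$ comes from discretizing: the gluing produced by Theorem~\ref{thm:uniqueness} is a bi-infinite tube, whose width is finite, and $\sup$ over a tight family gives a uniform bound in probability.

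To actually cross $\Lm_n$, I would place two translated copies of $\Lm_N$: one centered near $L_n$ (shifted slightly inward) and one near $R_n$. With high probability each is met by an infinite tube of width $\le w_0$. Since these tubes escape to infinity, I can run the left one backwards so that it exits through $L_n$; to guarantee this, I use the directional square-root trick over the $2d$ faces, which is applicable because the relevant events are increasing. Uniqueness then lets me glue the two tubes near the middle, so that the glued object starts at $L_n$ and ends at $R_n$.

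The main obstacle is exactly this gluing. Theorem~\ref{thm:uniqueness} glues tails near infinity, so the junction could lie outside $\Lm_n$, and the crossing tube must stay inside $\Lm_n$. My first attempt would be to apply uniqueness directly to the pair (left tube reversed, right tube), truncating the resulting bi-infinite tube at its first hits of $L_n$ and $R_n$. The part in between then lies in $\Lm_n$ provided the bi-infinite tube's middle portion is localized. I would enforce that localization with an ergodic, translation-invariant estimate: the probability that two tubes meeting a ball of radius $r$ admit a gluing within radius $R(r)$ tends to $1$. Applying this at the middle scale $r\approx n/4$ with $R\le n/2$ requires a scale-invariant version of the estimate, and proving that is where I expect the real difficulty.
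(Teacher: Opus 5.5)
Your part~1 is correct and is the paper's argument. The paper uses scale $2n$ directly, since the terminal sphere already meets $\del(x+\Lm_{2n})$.

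Part~2, however, has a genuine gap, which you essentially flag yourself. It rests on a scale-invariant localization of the gluing: two tubes meeting a ball of radius $r\approx n/4$ should be gluable within radius $n/2$ with probability tending to one. Theorem~\ref{thm:uniqueness} provides nothing of the sort. Uniqueness is qualitative: the junction exists at some almost surely finite but uncontrolled location. A fixed-aspect-ratio local-uniqueness estimate is a much stronger statement; even for bond percolation it needs substantial input beyond Burton--Keane.

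The difficulty comes from your geometry. Anchoring the two infinite tubes in boxes near $L_n$ and $R_n$ forces the junction to lie at distance of order $n$ from both anchors. The same choice breaks the square-root trick, which uses the symmetry of the $2d$ face events and so needs the anchor box centered in $\Lm_n$. Separately, bounding the width of the glued tube by a supremum over a `tight family' is unjustified, since infinitely many pairs of tubes may be involved. This is why one should work in $G_{w'}$: Lemma~\ref{lem:uniqueness} returns a double ray in the same graph, so the width is controlled deterministically.

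The missing idea is to anchor both arms at a single fixed central box and let only $n\to\infty$. Your own remark that, for a fixed anchor box, gluing happens inside some larger box with high probability is then exactly what is needed. Concretely:

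\begin{enumerate}
\item Fix $w_0$ such that infinite open tubes of width at most $w_0$ exist almost surely (by ergodicity). Given $h>0$, fix $m$ so that with probability exceeding $1-h$ such a tube has initial sphere meeting $\Lm_m$.
\item Stop that tube at its first contact with $\del\Lm_n$. This shows that for all large $n$, with probability at least $1-h$, some open tube in $\Lm_n$ of width at most $w_0$ runs from $\Lm_m$ to one of the $2d$ faces of $\Lm_n$.
\item The square-root trick and FKG then give arms from $\Lm_m$ to $L_n$ and to $R_n$ simultaneously, with probability at least $(1-h^{1/(2d)})^2$.
\item Let $A_n$ be the event that both arms exist but $\cH_{4w_0+13,n}$ fails. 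Then $\limsup_n\P_p(A_n)\le\P_p(\limsup_n A_n)$.
\item On $\limsup_n A_n$, a K\H{o}nig/diagonal extraction in the locally finite graph $G_{4w_0+13}$ yields an in-ray and an out-ray anchored near $\Lm_m$; this works because the arm endpoints near $\Lm_m$ range over a finite set.
\item No double ray can glue these two rays. A gluing double ray would have a finite middle segment, which for large $n$ in the subsequence lies inside $\Lm_n$. Spliced with the $n$-th arms, it would give a crossing.
\end{enumerate}

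This contradicts Lemma~\ref{lem:uniqueness}, so $\P_p(A_n)\to0$ and $\liminf_n\P_p(\cH_{4w_0+13,n})\ge(1-h^{1/(2d)})^2$ for every $h$. No quantitative control of the junction is needed: reverse Fatou turns the almost-sure finiteness of its location into the limit statement.
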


The first statement is an immediate consequence from the exponential decay of the tubular one-arm probability in $n$ when $p<p_c^{\tube}$~(see Theorem~\ref{thm:sharp_1-arm}(1)).
To prove the second statement, we rely on the analogue of the uniqueness of the infinite cluster in classical bond percolation~(see Theorem~\ref{thm:uniqueness}).

The remaining sections, except for Section~\ref{sec:discussion}, are devoted to the proofs of the above theorems.

\section{Critical Probabilities}
\label{sec:critical}
In this section, we prove Theorem~\ref{thm:comparison}, together with the inequalities~\eqref{eq:s-tube} and~\eqref{eq:s-tube_d-1} stated in Remark~\ref{rem:s-tube}.
We begin by establishing~\eqref{eq:s-tube}, then complete the proof of Theorem~\ref{thm:comparison}.
Finally, we verify~\eqref{eq:s-tube_d-1}.

\begin{proof}[Proof of~\eqref{eq:s-tube}]
  We first prove the first inequality: $p_c^{\cycle}(k,d)
  \le\widetilde p_c^{\,\,\tube}(k,d)$.
  Set
  \begin{align*}
  \widetilde\cT&:=\{\text{$\exists$ an infinite self-avoiding open $k$-tube whose initial sphere intersects $\Lm_o$}\}
  \shortintertext{and}
  A_n&:=\{\text{$\exists$ an infinite self-avoiding open $k$-tube whose initial sphere is contained in $\Lm_n$}\}.
  \end{align*}
  We fix $p>\widetilde p_c^{\,\,\tube}(k,d)$.
  Since $\P_p(\widetilde\cT)>0$ and $\widetilde\cT\subset\bigcup_{n\in\N}A_n$, there exists $n\in\N$ sufficiently large such that
  \begin{equation}\label{eq:An}
  \P_p(A_n)>0.
  \end{equation}
  Suppose that $\gm$ is an infinite self-avoiding open $k$-tube whose initial sphere is contained in $\Lm_n$.
  Let $\cQ_1$ denote the collection of all $k$-plaquettes contained in the image $\im(\gm)$.
  It is straightforward to verify that $\cQ_1$ is face-connected and its (set-theoretic) boundary $\cP:=\del_s\cQ_1$ is contained in $\Lm_n$.
  Therefore, there exists a face-connected collection $\cQ_2$ of $k$-plaquettes in $\Lm_n$ such that $\del_s\cQ_2=\cP$.
  (Here, we use the fact that the $(k-1)$st cubical homology group of the cubical set $\Lm_n$ is trivial, since $\Lm_n$ is contractible.)
  Consider the symmetric difference $\cQ:=\cQ_1\bigtriangleup\cQ_2$.
  Since
  \[
  \del_s\cQ
  =(\del_s\cQ_1)\bigtriangleup(\del_s\cQ_2)
  =\cP\bigtriangleup\cP
  =\emptyset,
  \]
  the collection $\cQ$ forms a cycle.
  Moreover, the collection $\cQ$ is infinite (since $\cQ_1$ is infinite) and face-connected, since both $\cQ_1$ and $\cQ_2$ are face-connected and share at least one $(k-1)$-plaquette.
  Recall that $\cK_k^d(\Lm_n)=\{Q\in\cK_k^d\mid Q\subset\Lm_n\}$.
  By the above construction,
  \begin{align*}
  \P_p(\text{cycle percolation occurs})
  &\ge\P_p(A_n\cap\{\eta(Q)=1\text{ for all $Q\in\cK_k^d(\Lm_n)$}\})\\
  &\ge\P_p(A_n)\cdot\P_p(\eta(Q)=1\text{ for all $Q\in\cK_k^d(\Lm_n)$})\\
  &>0.
  \end{align*}
  The second line follows from the FKG inequality, and the last line follows from~\eqref{eq:An}.
  Thus, $p\ge p_c^{\cycle}(k,d)$.
  Since $p>\widetilde p_c^{\,\,\tube}(k,d)$ was arbitrary, we conclude that $p_c^{\cycle}(k,d)\le\widetilde p_c^{\,\,\tube}(k,d)$.
  
  \smallskip
  We next prove the second inequality: $\widetilde p_c^{\,\,\tube}(k,d)<1$.
  It suffices to prove that $\widetilde p_c^{\,\,\tube}(k,k+1)<1$, since $\widetilde p_c^{\,\,\tube}(k,d)$ is nonincreasing in $d$ (which follows immediately by restricting the $k$-plaquette percolation model on $\R^d$ to the subspace $\R^{k+1}\times\{0\}^{d-k-1}$).
  We consider an associated site percolation on the scaled hypercubic lattice $(2\Z)^{k+1}$, whose edge set is
  \[
  \{\{x,y\}\subset(2\Z)^{k+1}\mid\|x-y\|_1=2\}.
  \]
  A vertex $x\in(2\Z)^{k+1}$ is declared \emph{occupied} if and only if every $k$-plaquette whose interior is contained in the half-open box $x+(-1,1]^{k+1}$ is open.
  An edge $\{x,y\}$ is present if and only if both endpoints $x$ and $y$ are occupied.
  Note that the probability that a given vertex is occupied is a monomial in $p$, and that the occupancies of distinct vertices are independent.
  Hence, for $p$ sufficiently close to $1$ (so that the monomial exceeds the critical probability for Bernoulli site percolation on the $(k+1)$-dimensional hypercubic lattice), with a positive probability, there exists an infinite self-avoiding path $(x_0,x_1,\ldots)$ of occupied vertices starting from the origin $o\in(2\Z)^{k+1}$.
  For each $i\ge0$, let $y_i\in\Z^{k+1}$ denote the midpoint between $x_i$ and $x_{i+1}$.
  By construction, the union
  \[
  \bigcup_{i=0}^\infty\bigl\{\bigl(x_i+[0,1]^{k+1}\bigr)\cup\bigl(y_i+[0,1]^{k+1}\bigr)\}
  \]
  contains an infinite self-avoiding open $k$-tube of width $1$ whose initial sphere contains the origin.
  This completes the proof.
\end{proof}

\begin{rem}\label{rem:core-tube}
  The above argument establishing $p_c^{\cycle}(k,d)\le\widetilde p_c^{\,\,\tube}(k,d)$ does not extend to $p_c^{\tube}(k,d)$ in place of $\widetilde p_c^{\,\,\tube}(k,d)$.
  The obstruction arises from the possibility that a $k$-tube may have self-intersection.
  In that case, for the collection $\cQ_1$ appearing in the proof, the number of incident $k$-plaquettes in $\cQ_1$ to some $(k-1)$-plaquette outside $\Lm_n$ may become odd, so that the boundary cancellation argument no longer applies.
  By contrast, in the case of core percolation, no such difficulty arises.
  Indeed, we may take the collection $\cQ_2$ in the above proof to be $\cK_k^d(\Lm_n)$, and simply define $\cQ:=\cQ_1\cup\cQ_2$.
  Then the collection $\cQ$ forms an infinite face-connected core.
  Consequently, core percolation occurs with positive probability whenever $p>p_c^{\tube}(k,d)$.
  This proves that $p_c^{\core}(k,d)\le p_c^{\tube}(k,d)$.
\end{rem}

\medskip
We next prove Theorem~\ref{thm:comparison}.
Before that, we briefly review the \emph{dual bond percolation model} of the $(d-1)$-plaquette percolation model on $\R^d$.
Let $(\L^d)^*=((\Z^d)^*,(\E^d)^*)$ denote the dual lattice, the copy of $\L^d=(\Z^d,\E^d)$ translated by the vector $(1/2,\ldots,1/2)\in\R^d$.
Given a $(d-1)$-plaquette configuration $\eta\in\Om=\{0,1\}^{\cK_{d-1}^d}$, its \emph{dual bond configuration}, denoted by $\eta^*\in\{0,1\}^{(\E^d)^*}$, is defined as follows.
Each $(d-1)$-plaquette $Q\in\cK_{d-1}^d$ corresponds to a unique dual edge $Q^*$ in the dual lattice $(\L^d)^*$ intersecting $Q$ at its midpoint.
We define $\eta^*(Q^*):=1-\eta(Q)$; that is, we declare the dual edge $Q^*$ to be open if and only if the corresponding $(d-1)$-plaquette $Q$ is closed.
Similarly, for an event $A\in\cF$, let $A^*:=\{\eta^*\mid\eta\in A\}$ denote its dual event.
Let $\Om^*:=\{0,1\}^{(\E^d)^*}$, equipped with the product $\sg$-field $\cF^*$, and define $\P_p^*(A^*):=\P_p(A)$.
Then $(\Om^*,\cF^*,\P_p^*)$ is the bond percolation model on the dual lattice $(\L^d)^*$ with open probability $1-p$.

\begin{proof}[Proof of Theorem~\ref{thm:comparison}]
  We first prove that $p_c^{\tube}(d-1,d)\le1-p_c^{\shield}(d)$.
  Fix $p>1-p_c^{\shield}(d)$.
  Then $1-p<p_c^{\shield}(d)$, and hence, with positive probability, shielded percolation occurs in the dual bond percolation model.
  Therefore, with positive probability, there exists an infinite self-avoiding path $(x_0,x_1,\ldots)$ of shielded dual vertices in the dual model, where only successive dual vertices are adjacent in $(\L^d)^*$.
  By construction, the union
  \[
  \bigcup_{i=0}^\infty(x_i+\Lm_o)
  \]
  contains an infinite open $(d-1)$-tube of width $1$.
  Consequently, $p\ge p_c^{\tube}(d-1,d)$.
  Since $p>1-p_c^{\shield}(d)$ was arbitrary, we conclude that $p_c^{\tube}(d-1,d)\le1-p_c^{\shield}(d)$.
  
  Furthermore, since $p_c^{\tube}(k,d)$ is nonincreasing in $d$, we have
  \[
  p_c^{\tube}(k,d)
  \le p_c^{\tube}(k,k+1)
  \le1-p_c^{\shield}(k+1).
  \]
  Combining this with Remark~\ref{rem:core-tube}, we obtain~\eqref{eq:core-tube-1}.
  The strict inequality~\eqref{eq:tube-strict} follows immediately from~\eqref{eq:bond-shield}.
  
  \smallskip
  Finally, we prove $p_c^{\tube}(1,d)=p_c^{\bond}(d)$.
  Fix $p>p_c^{\bond}(d)$.
  Then, with positive probability, there exists an injective Lipschitz continuous map $\gm\colon[0,\infty)\to\R^d$ such that $\im(\gm)$ is contained in the geometric realization $|\eta|\subset\R^d$ and $\gm$ satisfies
  \[
  \gm(0)=o\in\R^d
  \quad\text{and}\quad
  \lim_{t\to\infty}\|\gm(t)\|_\infty=\infty.
  \]
  We then define a continuous map $\widetilde\gm\colon\S^0\times[0,\infty)\to\R^d$ by
  \[
  \widetilde\gm(p,t):=
  \begin{cases}
    \gm(t)      &\text{if $p=-1$,}\\
    \gm(t+1)    &\text{if $p=1$.}
  \end{cases}
  \]
  It is straightforward to verify that $\widetilde\gm$ satisfies the three conditions in Definition~\ref{df:inf_tube}; hence, $\widetilde\gm$ is an infinite open $1$-tube.
  Note that although the two points in $\S^0$ travel along the same infinite open path, this does not violate the local self-avoidance condition.
  Consequently, $p\ge p_c^{\tube}(1,d)$.
  Since $p>p_c^{\bond}(d)$ was arbitrary, we conclude that $p_c^{\tube}(1,d)\le p_c^{\bond}(d)$.
  The reverse inequality $p_c^{\bond}(d)\le p_c^{\tube}(1,d)$ follows immediately from~\eqref{eq:all-bond_1} and~\eqref{eq:core-tube-1}.
  This completes the proof.
\end{proof}

\smallskip
As noted in~\eqref{eq:cycle-fin}, the first inequality in~\eqref{eq:s-tube_d-1} was established in~\cite{GHK2014}.
It therefore remains to prove the second inequality in~\eqref{eq:s-tube_d-1}, namely, $1-p_c^{\fin}(d)\le\widetilde p_c^{\,\,\tube}(d-1,d)$.
\begin{proof}[Proof of~\eqref{eq:s-tube_d-1}]
  Recall that
  \[
  \widetilde\cT:=\{\text{$\exists$ an infinite self-avoiding open $(d-1)$-tube whose initial sphere intersects $\Lm_o$}\}.
  \]
  For $w>0$, we define
  \[
  \widetilde\cT_w:=\left\{
  \begin{array}{l}
    \text{$\exists$ an infinite self-avoiding open $(d-1)$-tube of width}\\
    \text{at most $w$ whose initial sphere intersects $\Lm_o$}
  \end{array}
  \right\}.
  \]
  Fix $p>\widetilde p_c^{\,\,\tube}(d-1,d)$.
  Since $\P_p(\widetilde\cT)>0$ and $\widetilde\cT=\bigcup_{w\in\N}\widetilde\cT_w$, there exists $w\in\N$ such that
  \begin{equation}\label{eq:tilde_Tw}
  \P_p(\widetilde\cT_w)>0.
  \end{equation}
  
  \noindent\emph{Step~1: Measurable choice of a tube support.}
  For an infinite self-avoiding open $(d-1)$-tube $\gamma$, let $\cQ_\gm$ be the collection of all $(d-1)$-plaquettes contained in $\im(\gamma)$. Enumerate $\cK_{d-1}^d$ as $(Q_1,Q_2,\dots)$.
  On $\widetilde\cT_w$, among all collections $\cQ_\gm$ arising from infinite self-avoiding open tubes $\gamma$ with $\width(\gamma)\le w$ and initial sphere intersecting $\Lm_o$, choose $\widetilde\cQ=\widetilde\cQ(\eta)$ so that the indicator sequence $\bigl(\1_{\{Q_i\in\widetilde\cQ\}}\bigr)_{i\in\N}\in\{0,1\}^\N$ is lexicographically minimal.
  Outside $\widetilde\cT_w$, set $\widetilde\cQ:=\emptyset$.
  By construction, on $\widetilde\cT_w$, every $Q\in\widetilde\cQ$ is open in the primal plaquette configuration $\eta$, hence the corresponding dual edge $Q^*$ is closed in the dual bond configuration $\eta^*$.
  
  \smallskip
  \noindent\emph{Step~2: Deterministic cutsets inside the chosen tube.}
  On $\widetilde\cT_w$, using that the chosen tube is self-avoiding and has width at most $w$, we can construct two sequences
  \[
  V_1,V_2,\ldots\subset(\Z^d)^*
  \quad\text{and}\quad
  E_1,E_2,\ldots\subset(\E^d)^*
  \]
  of dual-vertex sets and dual-edge sets, respectively, such that~(see Figure~\ref{fig:fin-tube}):
  \begin{itemize}
    \item[(i)] The sets $E_i$ are pairwise disjoint and $E_i\cap\{Q^*\mid Q\in\widetilde\cQ\}=\emptyset$ for all $i$;
    \item[(ii)] $M:=\sup_{i\ge1}|E_i|<\infty$, and every $V_i$ is finite;
    \item[(iii)] The graph
    \[
    (\L^d)^*\setminus(\{Q^*\mid Q\in\widetilde\cQ\}\cup E_1)
    \]
    has exactly two infinite connected components, and let $(\L_\text{in}^d)^*$ be one of them.
    The vertex set of $(\L_\text{in}^d)^*$ coincides with the union $\bigcup_{i\ge1}V_i$.
    We call $(\L_\text{in}^d)^*$ the \emph{inside} of $\widetilde\cQ$;
    \item[(iv)] Every dual path in $(\L_\text{in}^d)^*$ that connects $V_i$ to $V_j$ ($j>i$) traverses at least one dual edge in each of the sets $E_{i+1},E_{i+2},\ldots,E_j$.
    We call each $E_i$ a \emph{cutset} for $(\L_\text{in}^d)^*$.
  \end{itemize}
  
  \noindent\emph{Step~3: No infinite open dual path inside $\widetilde\cQ$.}
  For each $i\ge1$, let
  \[
  \cE_i:=\{\text{the cutset $E_i$ contains at least one open dual edge in $\eta^*$}\}.
  \]
  Since $E_i$ is disjoint from $\{Q^*\mid Q\in\widetilde\cQ\}$ and the sets $E_i$ are pairwise disjoint, the events $(\cE_i)_{i\ge1}$ are independent conditional on $\widetilde\cQ$.
  Moreover,
  \[
  \P_p(\cE_i\mid\widetilde\cQ)
  =1-p^{|E_i|}
  \le 1-p^M
  =:a<1.
  \]
  Hence, for every $I\in\N$ and $N>I$,
  \[
  \P_p\left(\bigcap_{i=I}^N \cE_i\relmiddle|\widetilde\cQ\right)
  =\prod_{i=I}^N\P_p\Bigl(\cE_i\mid\widetilde\cQ\Bigr)
  \le a^{N-I+1}\xrightarrow[N\to\infty]{}0,
  \]
  which implies that
  \[
  \P_p\left(\bigcap_{i\ge I}\cE_i\relmiddle|\widetilde\cQ\right)=0
  \quad\text{a.s. for all $I\in\N$}.
  \]
  Therefore, by the union bound, we obtain
  \begin{equation}\label{eq:liminf_Ei}
  \P_p\left(\bigcup_{I\in\N}\bigcap_{i\ge I}\cE_i\relmiddle|\widetilde\cQ\right)=0\quad\text{a.s.}
  \end{equation}
  By Property~(iv) in Step~2, any infinite open dual path in $(\L_\text{in}^d)^*$ crosses all but finitely many cutsets, hence implies the event $\bigcup_I\bigcap_{i\ge I}\cE_i$, which has conditional probability~$0$ by~\eqref{eq:liminf_Ei}.
  Thus, conditional on $\widetilde\cQ$, every open dual cluster contained in the inside $(\L_\text{in}^d)^*$ of $\widetilde\cQ$ is finite.

  \smallskip
  \noindent\emph{Step~4: Percolation of finite clusters in the dual.}
  From Step~3, conditional on $\widetilde\cQ$, the infinite connected graph $(\L_\text{in}^d)^*$ consists entirely of finite open dual clusters, with conditional probability~$1$.
  Therefore, writing the event
  \[
  \cA:=\{\text{the graph $G^{\fin}(\eta^*)$ has an infinite connected component}\},
  \]
  we have $\P_p(\cA\mid\widetilde\cQ)=1$ a.s. on $\widetilde\cT_w$~(see Section~\ref{sec:prelim} for the definition of $G^{\fin}(\eta^*)$).
  Thus,
  \[
  \P_p(\cA)
  \ge\P_p(\cA\cap \widetilde\cT_w)
  =\E_p\big[\1_{\widetilde\cT_w}\P_p(\cA\mid\widetilde\cQ)\big]
  =\P_p(\widetilde\cT_w)>0
  \]
  by~\eqref{eq:tilde_Tw}.
  Consequently, in the dual bond percolation model with open parameter $1-p$, percolation of finite clusters occurs with positive probability, hence $1-p\le p_c^{\fin}(d)$.
  Since $p>\widetilde p_c^{\,\,\tube}(d-1,d)$ was arbitrary, we conclude that $1-p_c^{\fin}(d)\le \widetilde p_c^{\,\,\tube}(d-1,d)$.
\end{proof}

\begin{figure}[H]
  \centering
  \includegraphics[width=8.5cm]{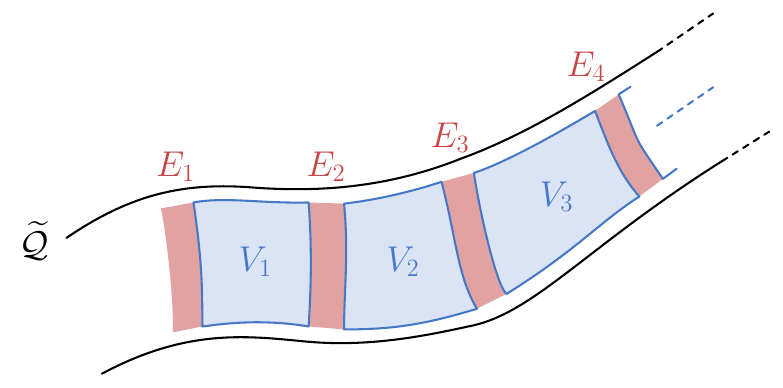}
  \caption{Illustration of the cutsets $E_i$~(shown in red) in the proof of~\eqref{eq:s-tube_d-1}.
  The two black curves represent the chosen tube support $\widetilde\cQ$.
  The inside of $\widetilde\cQ$ is given by the union of dual-vertex sets $V_i$~(shown in blue).}
  \label{fig:fin-tube}
\end{figure}

\section{Tubular One-Arm Events and Tube Percolation}
\label{sec:approx}
In this section, we prove Proposition~\ref{prop:approx}.
Recall that the tube percolation event $\cT$ and the tubular one-arm event $\cT_{w,n}$ with width at most $w$ are defined, respectively, by
\begin{align*}
\cT&:=\{\text{$\exists$ an infinite open $k$-tube whose initial sphere intersects $\Lm_o$}\}\\
\shortintertext{and}
\cT_{w,n}&:=\left\{
\begin{array}{l}
  \text{$\exists$ an open $k$-tube of width at most $w$ whose initial sphere}\\
  \text{intersects $\Lm_o$ and terminal sphere intersects $\del\Lm_n$}
\end{array}
\right\}.
\end{align*}
For $w>0$, we further define
\[
\cT_w\!:=\!\{\text{$\exists$ an infinite open $k$-tube of width at most $w$ whose initial sphere intersects $\Lm_o$}\}.
\]
Then clearly, $\cT=\bigcup_{w>0}\cT_w$.
To prove Proposition~\ref{prop:approx}, it suffices to show that, for each $w>0$,
\begin{equation}\label{eq:approx_Tw}
\cT_w
\subset\bigcap_{n\in\N}\cT_{w,n}
\subset\cT.
\end{equation}
Indeed, taking the union of~\eqref{eq:approx_Tw} over all $w>0$ yields~\eqref{eq:approx}.
The first inclusion in~\eqref{eq:approx_Tw} is immediate; therefore, the remainder of this section is devoted to proving the second inclusion in~\eqref{eq:approx_Tw}.

\subsection{Associated Directed Graph on Collars}
As noted in Remark~\ref{rem:transitivity}, in contrast to ordinary paths in bond percolation, the class of $k$-tubes is not closed under concatenation, making it nontrivial to construct an infinite open $k$-tube from a sequence of open $k$-tubes.
To address this difficulty, we introduce a directed graph associated with each configuration of the $k$-plaquette percolation model.
Intuitively speaking, in the case $k=2$, we view a topological loop together with a local trajectory passing through it~(a \emph{collar}) as a vertex.
We then connect two nearby collars by a directed edge if and only if there exists a tube from one to the other such that, near its initial and terminal loops, the tube is compatible with the local trajectories of the collars.
This additional structure enables us to concatenate tubes without violating the local self-avoidance condition at the junction.

We first introduce the notion of \emph{collars}.
In the following, we call a topological $(k-1)$-sphere contained in the $(k-1)$-dimensional complete cubical set $|\cK_{k-1}^d|$ a \emph{discrete $(k-1)$-sphere}.
Equivalently, a discrete $(k-1)$-sphere is a $(k-1)$-dimensional cubical set in $\R^d$ that is homeomorphic to $\S^{k-1}$.
\begin{df}[$k$-collar]
  Let $I_\eps:=[-\eps,\eps]$ for a fixed $\eps>0$.
  Given a discrete $(k-1)$-sphere $S$, a self-avoiding $k$-tube
  \[
  \dl\colon\S^{k-1}\times I_\eps\to|\cK_k^d|
  \]
  such that $\dl(\S^{k-1}\times\{0\})=S$ is called a \emph{$k$-collar} based at $S$.
  We refer to the discrete $(k-1)$-sphere $S$ as the \emph{base sphere} of $\dl$~(and as the \emph{base loop} when $k=2$), and denote it by $S_\dl$.
\end{df}
\begin{df}[Equivalence class of $k$-collars]
\label{df:equiv_collar}
  We say that two $k$-collars
  \[
  \dl\colon\S^{k-1}\times I_\eps\to|\cK_k^d|
  \quad\text{and}\quad
  \dl'\colon\S^{k-1}\times I_\eps\to|\cK_k^d|
  \]
  based at the same discrete $(k-1)$-sphere $S$ are \emph{equivalent} if there exists a continuous map
  \[
  H\colon\bigl(\S^{k-1}\times I_\eps\bigr)\times[0,1]\to|\cK_k^d|
  \]
  such that
  \begin{itemize}
    \item $H(\cdot,0)=\dl$ and $H(\cdot,1)=\dl'$, that is, $H$ is a homotopy between $\dl$ and $\dl'$;
    \item the map $H(\cdot,t)\colon\S^{k-1}\times I_\eps\to|\cK_k^d|$ is a $k$-collar based at $S$ for every $t\in[0,1]$~(in particular, each $H(\cdot,t)$ is injective).
  \end{itemize}

  We denote by $[\dl]$ the equivalence class of a $k$-collar $\dl$.
  For an equivalence class $\Dl=[\dl]$, we set $S_\Dl:=S_\dl$ and refer to it as the \emph{base sphere} of $\Dl$.
  (See Figure~\ref{fig:collars}.)
\end{df}
Intuitively, the equivalence class of a $k$-collar encodes only the directions from which the base sphere come and to which it proceeds.
\begin{figure}[H]
  \centering
  \includegraphics[width=5.5cm]{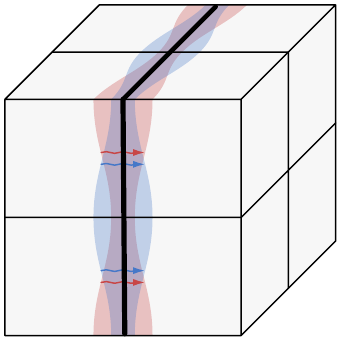}
  \caption{Illustration of two equivalent collars~(whose images are shown in red and blue, with red and blue arrows indicating the direction of the trajectories) for $k=2$ and $d=3$.
  The bold black line represents their common base loop~(a $2\times2$ square loop).
  They are equivalent only when the red and blue trajectories pass through the base loop in the same direction.}
  \label{fig:collars}
\end{figure}

We now define a directed graph associated with a give $k$-plaquette configuration.
\begin{df}[Associated directed graph on collars]
\label{df:associated}
  Let $w>0$.
  Associated with a configuration $\eta\in\Om$ of the $k$-plaquette percolation model on $\R^d$, a directed graph $G_w=G_w(\eta)$ is defined as follows.
  The vertex set $V_w$ is given by
  \[
  V_w:=\{[\dl]\mid\text{$\dl$ is a $k$-collar based at some discrete $(k-1)$-sphere $S$ with $\diam(S)\le w$}\}.
  \]
  For distinct vertices $\Dl,\Dl'\in V_w$, we include the ordered pair $(\Dl,\Dl')$ as a directed edge in $E_w=E_w(\eta)$ if and only if there exist $k$-collars
  \[
  \dl\colon\S^{k-1}\times I_\eps\to|\cK_k^d|
  \quad\text{and}\quad
  \dl'\colon\S^{k-1}\times I_\eps\to|\cK_k^d|
  \]
  representing the equivalence classes $\Dl$ and $\Dl'$, respectively, and an open $k$-tube
  \[
  \gm\colon\S^{k-1}\times[a,b]\to\R^d
  \quad\text{with $\width(\gm)\le w$}
  \]
  such that the following conditions hold:
  \begin{itemize}
    \item (\emph{Initial compatibility}).
    For any $(p,h)\in\S^{k-1}\times[0,\eps]$,
    \[
    \gm(p,a+h)=\dl(p,h);
    \]
    \item (\emph{Terminal compatibility}).
    For any $(p,h)\in\S^{k-1}\times[-\eps,0]$,
    \[
    \gm(p,b+h)=\dl'(p,h);
    \]
    \item (\emph{Locality}).
    It holds that
    \[
    \diam(\im\gm)\le2w+4.
    \]
  \end{itemize}
\end{df}
Note that the directed graph $G_w=(V_w,E_w)$ is locally finite; that is, for each $\Dl\in V_w$, the numbers of in-edges and out-edges are finite.
This follows immediately from the locality condition in Definition~\ref{df:associated} and the fact that there are only finitely many equivalence classes of $k$-collars based at the same discrete $(k-1)$-sphere, which ultimately is a consequence of the discrete structure of the $k$-dimensional complete cubical set $|\cK_k^d|$.

\subsection{Reachability and Join}
We consider two types of relations: a relation on the associated directed graph $G_w=(V_w,E_w)$, defined via the existence of directed paths, and a relation on topological spheres, defined via the existence of connecting open tubes.
\begin{df}[Reachability]\label{df:reachability}
  For two vertices $\Dl$ and $\Dl'$ in the associated directed graph $G_w$, we say that $\Dl'$ is \emph{reachable} from $\Dl$ in $G_w$, and write $\Dl\rightsquigarrow_w\Dl'$, if there exists a directed path from $\Dl$ to $\Dl'$; that is, a finite sequence $(\Dl_0,\Dl_1,\ldots,\Dl_I)$ of vertices in $G_w$ such that $\Dl_0=\Dl$, $\Dl_I=\Dl'$, and $(\Dl_{i-1},\Dl_i)\in E_w$ for all $i=1,2,\ldots,I$.
  Furthermore, we say $\Dl$ \emph{reaches infinity} in $G_w$, and write $\Dl\rightsquigarrow_w\infty$, if there exists an infinite self-avoiding directed path from $\Dl$; that is, a sequence $(\Dl_0,\Dl_1,\ldots)$ of distinct vertices such that $\Dl_0=\Dl$ and $(\Dl_{i-1},\Dl_i)\in E_w$ for all $i\in\N$.
\end{df}

\begin{df}[Join]
  Two topological $(k-1)$-spheres $S$ and $S'$ in $\R^d$ are said to be \emph{joined with width at most $w$}, denoted by $S\approx_wS'$, if there exists an open $k$-tube of width at most $w$ whose initial sphere is $S$ and terminal sphere is $S'$.
  We say that $S$ is \emph{joined to infinity with width at most $w$}, and write $S\approx_w\infty$, if there exists an infinite open $k$-tube of width at most $w$ whose initial sphere is $S$.
  Furthermore, if there exits some $w>0$ such that $S\approx_w\infty$, then we simply say that $S$ is \emph{joined to infinity}, and denote this by $S\approx\infty$.
\end{df}

The following lemma clarifies the relationship between the reachability relation $\rightsquigarrow_w$ in the associated directed graph $G_w$ and the join relation $\approx_w$ in the $k$-plaquette percolation model.
Recall that $S_\Dl$ denotes the base sphere of $\Dl\in V_w$~(see Definition~\ref{df:equiv_collar}).
\begin{lem}\label{lem:reach-join}
  For each $w>0$, the following statements hold$:$
  \begin{itemize}
    \item $($From reachability to join$)$.
    If $\Dl\rightsquigarrow_w\Dl'$, then
    \[
    S_\Dl\approx_wS_{\Dl'}.
    \]
    \item $($From join to reachability$)$.
    Let $S,S'$ be discrete $(k-1)$-spheres.
    If $S\approx_wS'$, then there exist $\Dl,\Dl'\in V_{2w+4}$ with $S_\Dl=S$ and $S_{\Dl'}=S'$ such that
    \[
    \Dl\rightsquigarrow_{2w+4}\Dl'.
    \]
  \end{itemize}
\end{lem}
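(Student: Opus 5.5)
For the first statement (from reachability to join), I would argue by induction on the length $I$ of a directed path $(\Dl_0,\ldots,\Dl_I)$ from $\Dl$ to $\Dl'$. The case $I=0$ is the trivial tube (the spheres coincide; if one insists on $a<b$, take the restriction of a representing collar, which is open only if its plaquettes are open, so I would treat $\Dl=\Dl'$ via the convention that reachability by the empty path is immediate).

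The key step is concatenating the tube $\gm_{i}$ witnessing the edge $(\Dl_{i-1},\Dl_i)$ with the tube $\gm_{i+1}$ witnessing $(\Dl_i,\Dl_{i+1})$. The two witnesses use representatives $\dl$ and $\dl''$ of $\Dl_i$ that may differ. Equivalent collars are joined by an isotopy $H$ through collars based at $S_{\Dl_i}$. Rather than use $H$ directly, I would reparametrize: near the junction, $\gm_i$ agrees with $\dl$ on $\S^{k-1}\times[-\eps,0]$ and $\gm_{i+1}$ agrees with $\dl''$ on $\S^{k-1}\times[0,\eps]$. Define the concatenation to follow $\gm_i$ up to the base sphere and then $\gm_{i+1}$. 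Local self-avoidance can fail only at the junction time, and there the map near the junction is $\dl|_{[-\eps,0]}\cup\dl''|_{[0,\eps]}$. I would show this glued map is injective on a small neighbourhood by using the homotopy $H$ to see that $\dl''|_{[0,\eps]}$ lies in the same ``side'' as $\dl|_{[0,\eps]}$, which is disjoint from $\dl|_{[-\eps,0)}$ near $S$.

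The cleaner route is to first replace $\dl''$ by $\dl$ inside $\gm_{i+1}$ on a short initial segment, interpolating via $H$. Shrinking $\eps$ to $\eps/2$ and running the homotopy parameter along the collar direction, $(p,h)\mapsto H(p,h,\phi(h))$, produces a tube that begins as $\dl$ and continues as $\gm_{i+1}$. Local injectivity then needs checking, and this is the delicate technical point. Because everything lies in the cubical set $|\cK_k^d|$ near $S$, I would instead use the combinatorial description: near $S$ a collar is determined up to equivalence by the choice of $k$-plaquettes it enters on each side, and can be taken piecewise linear. Also, openness of the middle part requires that the plaquettes swept by the interpolation are open. They are the plaquettes adjacent to $S$ used by $\dl''$, which are already in $\im\gm_{i+1}$, so they are open. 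Width is preserved since every cross-section is either a cross-section of a witness or lies within the $\eps$-collar of $S$, whose diameter is at most $w$ up to $\eps$ (I would choose $\eps$ small or absorb it). I expect this junction/interpolation argument to be the main obstacle.

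For the second statement (from join to reachability), take an open tube $\gm\colon\S^{k-1}\times[a,b]\to\R^d$ of width at most $w$ from $S$ to $S'$. First homotope it, relative to its ends, into a cellular tube whose slices at suitably many times $a=t_0<\cdots<t_m=b$ are discrete spheres $S_j$ with a collar structure around each: using the cubical structure of $|\eta|$, deform each slice onto the $(k-1)$-skeleton. Projection onto the $(k-1)$-skeleton at most doubles diameters, giving $\diam S_j\le 2w+2$. Choose the $t_j$ fine enough, by uniform continuity, that $\diam\gm(\S^{k-1}\times[t_{j-1},t_j])\le 2w+4$, and such that $\gm$ is injective on each piece by local self-avoidance and compactness. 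Near each $t_j$, the tube restricted to $[t_j-\eps,t_j+\eps]$ is a collar $\dl_j$ after the cellular adjustment. Setting $\Dl_j=[\dl_j]$, the restriction of $\gm$ to $[t_{j-1},t_j]$ witnesses the edge $(\Dl_{j-1},\Dl_j)$ in $G_{2w+4}$. This yields $\Dl_0\rightsquigarrow_{2w+4}\Dl_m$ with $S_{\Dl_0}=S$ and $S_{\Dl_m}=S'$. The cellular approximation making the intermediate slices discrete spheres while preserving local injectivity is the second nontrivial point.
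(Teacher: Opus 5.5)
\paragraph{Main gap: intermediate slices in the ``join to reachability'' direction.} The genuine gap is at the step you defer: producing intermediate slices that are discrete spheres while keeping the tube open and locally self-avoiding. Projecting or cellularly deforming a slice $\gm(\S^{k-1}\times\{t\})$ onto $|\cK_{k-1}^d|$ inside the plaquettes it crosses does not achieve this.

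Such a map is in general not injective on the slice, e.g.\ when the slice crosses the same plaquette along several arcs. So the result need not be a topological $(k-1)$-sphere. Moreover, the homotopy from the slice to its projection, once spliced into the tube, sweeps through points already occupied by neighbouring slices, which destroys local injectivity. Choosing the $t_j$ very fine makes this worse: discretization moves points by up to distance $1$, so discretized versions of nearby slices can collide or cross.

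The idea you are missing is to discretize \emph{along the tube's own trajectories}, with coarsely separated slices. The paper proceeds as follows.
\begin{itemize}
\item It takes $S_i=\gm(\S^{k-1}\times\{t_i\})$ to be the first slice at $\ell^\infty$-distance $2$ from $S_{i-1}$, and lets $X_i$ be the smallest cubical set containing $S_i$.
\item It sets $\tau_i(p):=\sup\{t\le t_i:\gm(p,t)\notin X_i\}$.
\item A path in $|\cK_k^d|$ can only exit such a set through $|\cK_{k-1}^d|$, so the points $\gm(p,\tau_i(p))$ lie in the $(k-1)$-skeleton.
\item Because $X_i$ lies within distance $1$ of $S_i$, the separation forces $t_{i-1}<\tau_i(p)\le t_i$.
\item The new pieces are obtained by reparametrizing each trajectory $\gm(p,\cdot)$ linearly from $\tau_{i-1}(p)$ to $\tau_i(p)$. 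They stay inside $\im\gm\subset|\eta|$, so they inherit openness and local self-avoidance.
\item The diameter bound comes from $\im\widetilde\gm_i\subset X_{i-1}\cup\gm(\S^{k-1}\times[t_{i-1},t_i])$, not from uniform continuity. A small perturbation makes $\tau_i$ continuous.
\end{itemize}

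\paragraph{Second issue: the junction in the ``reachability to join'' direction.} Following $\gm_i$ up to $S=S_{\Dl_i}$ and then $\gm_{i+1}$ is in general not even continuous. The maps $\dl(\cdot,0)$ and $\dl''(\cdot,0)$ are possibly different parametrizations of the same set $S$, and your ``same side'' observation only addresses injectivity.

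Your interpolation $(p,h)\mapsto H(p,h,\phi(h))$ repairs continuity, but it need not be injective. Also, the intermediate collars $H(\cdot,\cdot,t)$ live in $|\cK_k^d|$ rather than $|\eta|$, so the remark that $\dl''$ uses open plaquettes does not establish openness.

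The clean fix is to reparametrize only the sphere coordinate. With $H$ running from $\dl$ to $\dl''$, the family $\psi_t:=\dl''(\cdot,0)^{-1}\circ H(\cdot,0,t)$ is an isotopy of $\S^{k-1}$ from $\dl''(\cdot,0)^{-1}\circ\dl(\cdot,0)$ to the identity. Precompose the $\S^{k-1}$-coordinate of $\gm_{i+1}$ on a short initial segment with $\psi_{s(h)}$, where $s$ increases from $0$ to $1$. This makes the concatenation continuous.

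Every slice of the modified piece is setwise a slice of $\gm_{i+1}$, so openness, width and local injectivity away from the junction are inherited. At the junction, injectivity follows from $\dl(\S^{k-1}\times[-h,0])\cap\dl''(\S^{k-1}\times[0,h])=S$ for small $h$, and this is where the equivalence of the collars is used. This is what the paper's ``adjust only the initial part of $\gm_{i+1}$ without changing the width'' amounts to.
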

\begin{rem}
  Whereas the width parameter $w$ is preserved when passing from reachability to join, it may increase when passing from join to reachability.
  This possible increase arises from the discretization of $k$-tubes required in the proof.
  Indeed, although the initial and terminal spheres of an open $k$-tube $\gm\colon\S^{k-1}\times[a,b]\to\R^d$ are discrete $(k-1)$-spheres, an intermediate topological $(k-1)$-sphere $\gm(\S^{k-1}\times\{t\})$ at level $t$ need not be a discrete $(k-1)$-sphere in general.
  Accordingly, modifying such a $k$-tube to form discrete $(k-1)$-spheres at intermediate levels may incur an additional width cost; however, this increase remains controlled and is sufficient for our purposes.
\end{rem}
\begin{proof}[Proof of Lemma~\ref{lem:reach-join}]
  (\emph{From reachability to join}).
  Suppose that $\Dl\rightsquigarrow_w\Dl'$.
  Then there exist a directed path $(\Dl_0,\Dl_1,\ldots,\Dl_I)$ in $G_w$ with $\Dl_0=\Dl$ and $\Dl_I=\Dl'$.
  By the definition of the directed-edge set $E_w$, for each $i=1,2,\ldots,I$, we can take $k$-collars
  \[
  \dl_{i-1}\colon\S^{k-1}\times I_\eps\to|\cK_k^d|
  \quad\text{and}\quad
  \dl'_i\colon\S^{k-1}\times I_\eps\to|\cK_k^d|
  \]
  representing the equivalence classes $\Dl_{i-1}$ and $\Dl_i$, respectively, together with an open $k$-tube
  \[
  \gm_i\colon\S^{k-1}\times[i-1,i]\to\R^d
  \quad\text{with $\width(\gm_i)\le w$}
  \]
  such that the initial and terminal compatibility conditions in Definition~\ref{df:associated} are satisfied:
  \begin{align*}
  \gm_i(p,i-1+h)&=\dl_{i-1}(p,h)\text{ for any $(p,h)\in\S^{k-1}\times[0,\eps]$}
  \shortintertext{and}
  \gm_i(p,i+h)&=\dl'_i(p,h)\text{ for any $(p,h)\in\S^{k-1}\times[-\eps,0]$.}
  \end{align*}
  Fix $i=1,2,\ldots,I-1$.
  Since $[\dl'_i]=\Dl_i=[\dl_i]$, there exists a homotopy
  \[
  H\colon\bigl(\S^{k-1}\times I_\eps\bigr)\times[0,1]\to|\cK_k^d|
  \]
  between $\dl_{i-1}$ and $\dl'_i$ satisfying the additional conditions in Definition~\ref{df:equiv_collar}.
  In particular, for sufficiently small $h\in(0,\eps]$, we have
  \[
  \dl'_i(\S^{k-1}\times[-h,0])\,\cap\,\dl_i(\S^{k-1}\times[0,h])=S_{\Dl_i}.
  \]
  Therefore, we can adjust only the initial part $\gm_{i+1}|_{\S^{k-1}\times[i,i+h]}$ of $\gm_{i+1}$ so that the terminal part $\gm_i|_{\S^{k-1}\times[i-h,i]}$ of $\gm_i$ attaches continuously to $\gm_{i+1}$, while preserving local self-avoidance.
  This adjustment can be performed without changing the width.
  Denote the resulting maps by $\widetilde\gm_i$~($i=2,3,\ldots,I$), and set $\widetilde\gm_1:=\gm_1$.
  We then define a new open $k$-tube $\gm\colon\S^{k-1}\times[0,I]\to\R^d$ by
  \[
  \gm(p,t):=\widetilde\gm_i(p,t)\quad\text{for $t\in[i-1,i]$.}
  \]
  The initial and terminal spheres of the open $k$-tube $\gm$ is $S_\Dl$ and $S_{\Dl'}$, respectively.
  This completes the proof of the first statement.

  \medskip
  \noindent(\emph{From join to reachability}).
  Let $S,S'$ be discrete $(k-1)$-spheres, and let
  \[
  \gm\colon\S^{k-1}\times[a,b]\to\R^d
  \]
  be an open $k$-tube of width at most $w$ with initial sphere $S$ and terminal sphere $S'$.
  We inductively define a finite sequence $(S_0,S_1,S_2,\ldots)$ of topological $(k-1)$-spheres so that they are sufficiently well separated along the tube, which will be convenient for the discretization later.
  Set $t_0:=a$ and $S_0:=S$.
  For each $i\ge1$, define
  \begin{equation}\label{eq:t_i}
  t_i:=\inf\{t\in[t_{i-1},b]\mid d_\infty(\gm(\S^{k-1}\times\{t\}),S_{i-1})\ge2\}
  \quad\text{and}\quad
  S_i:=\gm(\S^{k-1}\times\{t_i\}).
  \end{equation}
  Here, $d_\infty(A,B):=\inf\{\|x-y\|_\infty\mid x\in A,\,y\in B\}$ is the $\ell^\infty$-distance between subsets $A$ and $B$ of $\R^d$.
  If the set in~\eqref{eq:t_i} is empty, then we define $t_i:=b$ and $S_i=S'$, and the procedure terminates.
  Suppose that the above procedure yields sequences
  \[
  a=t_0<t_1<\cdots<t_I=b
  \quad\text{and}\quad
  S=S_0,S_1,\ldots,S_I=S'
  \]
  for some $I\in\N$.
  Note that, except for $S_0=S$ and $S_I=S'$, the intermediate topological $(k-1)$-spheres $S_i$~($i=1,2,\ldots,I-1$) are not necessarily discrete $(k-1)$-spheres.
  
  We now modify each restricted $k$-tube
  \[
  \gm_i:=\gm|_{\S^{k-1}\times[t_{i-1},t_i]}
  \]
  so that its initial and terminal spheres $S_{i-1}$ and $S_i$ become discrete $(k-1)$-spheres.
  To this end, let $X_i$ denote the smallest cubical set containing $S_i$~(see Figure~\ref{fig:discretize}).
  Note that $X_i$ is a $k$-dimensional cubical set (or $(k-1)$-dimensional if $S_i$ is already a discrete $(k-1)$-sphere).
  For $i=0,1,\ldots,I$ and $p\in\S^{k-1}$, define
  \[
  \tau_i(p):=
  \begin{cases}
    t_i     &\text{if $i\in\{0,I\}$,}\\
    \sup\{t\in[0,t_i]\mid\gm(p,t)\notin X_i\}   &\text{if $1\le i\le I-1$.}
  \end{cases}
  \]
  In other words, $\tau_i(p)$ is the first time $t$ at which $\gm(p,t)$ exits $X_i$ as $t$ decreases from $t_i$.
  In general, $\tau_i(p)$ might not be continuous in $p$, since for a fixed $p\in\S^{k-1}$, the trajectory $t\mapsto\gm(p,t)\in|\cK_k^d|$ may fail to intersect the boundary of $X_i$ transversely.
  However, by perturbing the original map $\gm$ at an arbitrarily small additional width cost $\a>0$, one can ensure that all such trajectories intersect $|\cK_{k-1}^d|$, and hence the boundary of $X_i$, transversely; it then follows that $\tau_i(\cdot)$ is continuous.
  Hence, we assume that $\gm$ satisfies this condition, with $\width(\gm)\le w':=w+\a$.
  Since $X_i$ is a union of plaquettes (each of diameter $1$) intersecting $S_i$, the construction of $S_i$ implies that $t_{i-1}<\tau_i(p)\le t_i$ for each $i=1,2,\ldots,I$ and any $p\in\S^{k-1}$.
  Consequently, for every $p\in\S^{k-1}$,
  \[
  a=\tau_0(p)=t_0<\tau_1(p)\le t_1<\tau_2(p)\le t_2<\cdots\le t_{I-1}<\tau_I(p)=t_I=b.
  \]
  For each $i=1,2,\ldots,I$, we define a modified open $k$-tube
  \[
  \widetilde\gm_i\colon\S^{k-1}\times[t_{i-1},t_i]\to\R^d
  \]
  associated with $\gm_i$ by
  \[
  \widetilde\gm_i(p,t):=\gm\biggl(p,\tau_{i-1}(p)+\frac{\tau_i(p)-\tau_{i-1}(p)}{t_i-t_{i-1}}(t-t_{i-1})\biggr).
  \]
  That is, for each $p\in\S^{k-1}$, we linearly rescale the interval $[t_{i-1},t_i]$ onto $[\tau_{i-1}(p),\tau_i(p)]$, and then apply the original map $\gm$.
  With this construction, the map $\widetilde\gm_i$ is continuous, and its initial and terminal spheres, denoted by $\widetilde S_{i-1}$ and $\widetilde S_i$, respectively, are both discrete $(k-1)$-spheres.
  We then define a $k$-collar $\dl_i\colon\S^{k-1}\times I_\eps\to|\cK_k^d|$ based at $\widetilde S_i$ along $\widetilde\gm_i$~($i=1,2,\ldots,I$).
  
  Finally, we show that the finite sequence $([\dl_0],[\dl_1],\dots,[\dl_I])$ forms a directed path in $G_{2w+4}$.
  Since
  \begin{align*}
  \im\widetilde\gm_i
  &=\gm\Biggl(\bigsqcup_{p\in\S^{k-1}}\bigl(\{p\}\times[\tau_{i-1}(p),\tau_i(p)]\bigr)\Biggr)\\
  &\subset\gm\Biggl(
  \bigsqcup_{p\in\S^{k-1}}\bigl(\{p\}\times[\tau_{i-1}(p),t_{i-1}]\bigr)\cup\bigl(\S^{k-1}\times[t_{i-1},t_i]\bigr)\Biggr)\\
  &=\gm\Biggl(\bigsqcup_{p\in\S^{k-1}}\bigl(\{p\}\times[\tau_{i-1}(p),t_{i-1}]\bigr)\Biggr)\cup\im\gm_i\\
  &\subset X_{i-1}\cup\im\gm_i,
  \end{align*}
  it follows that for any $x,y\in\im\widetilde\gm_i$,
  \[
  \|x-y\|_\infty\le
  \begin{cases}
    w'+2         &\text{if $x,y\in X_{i-1}$,}\\
    2w'+2        &\text{if $x,y\in\im\gm_i$,}\\
    2w'+3        &\text{otherwise.}
  \end{cases}
  \]
  The first case follows from the definition of $t_i$ in~\eqref{eq:t_i}, while the second follows from the definition of $X_i$.
  For the remaining case, suppose that $x\in X_{i-1}$ and $y\in\im\gm_i$.
  Then choose a point $z\in S_{i-1}\subset X_{i-1}\cap\im\gm_i$ such that $\|x-z\|_\infty\le1$, and apply the triangle inequality.
  Therefore,
  \[
  \diam(\im\widetilde\gm_i)
  \le2w'+3.
  \]
  Thus, the locality condition in Definition~\ref{df:associated} is satisfied by taking $\a=1/2$.
  Moreover,
  \[
  \width(\widetilde\gm_i)
  \le\diam(\im\widetilde\gm_i)
  \le2w'+3
  =2w+4,
  \]
  which completes the proof.
\end{proof}

\begin{figure}[H]
  \centering
  \includegraphics[width=4.5cm]{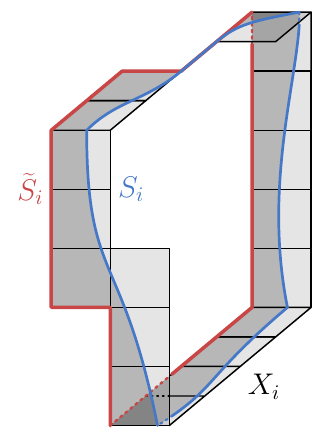}
  \caption{Illustration of the discretization of the intermediate topological $(k-1)$-sphere $S_i$ for $k=2$ and $d=3$.
  The loop $S_i$ (shown in blue) is suitably perturbed to a discrete loop $\widetilde S_i$ (shown in red) in the cubical set $X_i$.}
  \label{fig:discretize}
\end{figure}

By the same arguments as in the proof of Lemma~\ref{lem:reach-join}, we obtain the following analogue.
\begin{lem}\label{lem:r-j_infty}
  For each $w>0$, the following statements hold$:$
  \begin{itemize}
    \item $($From reachability to join$)$.
    If $\Dl\rightsquigarrow_w\infty$, then
    \[
    S_\Dl\approx_w\infty.
    \]
    \item $($From join to reachability$)$.
    Let $S$ be a discrete $(k-1)$-sphere.
    If $S\approx_w\infty$, then there exists $\Dl\in V_{2w+4}$ with $S_\Dl=S$ such that
    \[
    \Dl\rightsquigarrow_{2w+4}\infty.
    \]
  \end{itemize}
\end{lem}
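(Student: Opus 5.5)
The plan is to adapt the proof of Lemma~\ref{lem:reach-join} to infinite objects, handling each direction separately.

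\emph{From reachability to join.} Suppose $\Dl\rightsquigarrow_w\infty$, witnessed by an infinite self-avoiding directed path $(\Dl_0,\Dl_1,\ldots)$ with $\Dl_0=\Dl$.
\begin{enumerate}
\item For each edge $(\Dl_{i-1},\Dl_i)$, choose collars and an open $k$-tube $\gm_i\colon\S^{k-1}\times[i-1,i]\to\R^d$ of width at most $w$ satisfying the compatibility conditions of Definition~\ref{df:associated}.
\item Glue consecutive pieces at the base spheres $S_{\Dl_i}$ exactly as in the finite case. The adjustments are local to small neighbourhoods of the junctions and never change the width, so they can be done for all $i$ simultaneously. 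The result is a continuous, locally self-avoiding map $\gm\colon\S^{k-1}\times[0,\infty)\to|\eta|$ with $\width(\gm)\le w$.
\item Escape to infinity. The vertices $\Dl_i$ are distinct, and there are only finitely many equivalence classes of collars based at a given discrete sphere. Moreover, only finitely many discrete spheres of diameter at most $w$ meet any bounded box. Hence the base spheres $S_{\Dl_i}$ leave every bounded set, so $d_\infty(o,S_{\Dl_i})\to\infty$.
\item By locality, $\diam(\im\gm_i)\le 2w+4$. So every intermediate sphere $\gm(\S^{k-1}\times\{t\})$ with $t\in[i-1,i]$ lies within distance $2w+4$ of $S_{\Dl_i}$. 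This gives $\lim_{t\to\infty}d_\infty(o,\gm(\S^{k-1}\times\{t\}))=\infty$.
\end{enumerate}

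\emph{From join to reachability.} Let $\gm\colon\S^{k-1}\times[0,\infty)\to|\eta|$ be an infinite open $k$-tube of width at most $w$ with initial sphere $S$.
\begin{enumerate}
\item Define times $t_i$ and spheres $S_i$ by the recursion~\eqref{eq:t_i}, with $b=\infty$. Every $t_i$ is finite because of escape to infinity: eventually the spheres are at distance at least $2$ from $S_{i-1}$. By continuity, $t_i>t_{i-1}$.
\item Check that $t_i\to\infty$. If instead $t_i\uparrow t^*<\infty$, uniform continuity of $\gm$ on $\S^{k-1}\times[0,t^*]$ would contradict $d_\infty(S_i,S_{i-1})\ge2$.
\item Perform the same small generic perturbation (cost $\a=1/2$), the exit times $\tau_i$, the reparametrized pieces $\widetilde\gm_i$, and the collars $\dl_i$ as in Lemma~\ref{lem:reach-join}. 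The perturbation must be applied to the infinite tube at once; this is possible because it can be made locally with summable sizes and is trivially uniform in width.
\item The same diameter estimate shows that each $([\dl_{i-1}],[\dl_i])$ is an edge of $G_{2w+4}$, giving an infinite directed path from $\Dl:=[\dl_0]$.
\end{enumerate}

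It remains to extract a \emph{self-avoiding} infinite directed path, since Definition~\ref{df:reachability} requires distinct vertices. Because $S_i\subset X_i$ and $d_\infty(o,S_i)\to\infty$, each vertex appears only finitely often in the sequence. So we can erase loops: repeatedly cut from the first occurrence of a vertex to its last occurrence. This yields an infinite path of distinct vertices starting at $\Dl$.

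I expect the main obstacle to be making the constructions uniform along the infinite parameter range. This means doing the perturbation globally without losing continuity or local self-avoidance, and ensuring that $t_i\to\infty$ and that loop erasure works. The finite-piece estimates themselves carry over verbatim.
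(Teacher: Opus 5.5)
Your proposal is correct and follows the same route as the paper. The paper reruns the construction of Lemma~\ref{lem:reach-join} and checks escape to infinity using two facts: the base spheres $S_{\Dl_i}$ leave every bounded set, and each piece satisfies the locality bound. Your extra steps ($t_i<\infty$, $t_i\to\infty$, and the loop erasure giving distinct vertices, which the paper only invokes later in the proof of Theorem~\ref{thm:uniqueness}) fill in what the paper leaves implicit. One small correction: the base sphere of $[\dl_i]$ is $\widetilde S_i$, not $S_i$, so the containment you need is $\widetilde S_i\subset X_i$, with $X_i$ lying within $\ell^\infty$-distance $1$ of $S_i$.
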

\begin{proof}
  (\emph{From reachability to join}).
  As in the proof of Lemma~\ref{lem:reach-join}, we construct an infinite open $k$-tube $\gm\colon\S^{k-1}\times[0,\infty)\to\R^d$ using the modified maps $\widetilde\gm_1,\widetilde\gm_2,\ldots$.
  The only point that requires additional verification is the escape-to-infinity condition:
  \[
  \lim_{t\to\infty}d_\infty(o,\gm(\S^{k-1}\times\{t\}))=\infty.
  \]
  This follows immediately from the fact that the base sphere $S_{\Dl_i}$ escapes to infinity as $i\to\infty$, together with the locality condition satisfied by each $\gm_i$.
  
  \smallskip
  \noindent(\emph{From join to reachability}).
  The argument is entirely analogous to that of Lemma~\ref{lem:reach-join}.
\end{proof}

\subsection{Proof of Proposition~\ref{prop:approx}}
We introduce the following additional notation.
For a vertex $\Dl$ in the associated directed graph $G_w=(V_w,E_w)$, let $\cC_w^+(\Dl)$ denote the \emph{out-component} of $\Dl$; that is,
\[
\cC_w^+(\Dl)
:=\{\Dl'\in V_w\mid\Dl\rightsquigarrow_w\Dl'\}.
\]
We are now in a position to prove Proposition~\ref{prop:approx}.
\begin{proof}[Proof of Proposition~\ref{prop:approx}]
  As noted at the beginning of Section~\ref{sec:approx}, it suffices to establish the second inclusion in~\eqref{eq:approx_Tw} for a fixed $w>0$.
  In particular, we show that
  \begin{equation}\label{eq:approx_Tw_2nd}
  \bigcap_{n\in\N}\cT_{w,n}\subset\cT_{4w+13}.
  \end{equation}
  
  Suppose that the left-hand event occurs, and fix a sufficiently large $n\in\N$.
  Then there exists an open $k$-tube $\gm_n$ of width at most $w$ whose initial sphere intersects $\Lm_o$ and terminal sphere intersects $\del\Lm_n$.
  Since the initial and terminal spheres of $\gm_n$ are not necessarily discrete $(k-1)$-spheres, we slightly modify $\gm_n$ so that they become discrete $(k-1)$-spheres in order to apply Lemma~\ref{lem:reach-join}~(from join to reachability).
  The discretization argument is similar to that in the lemma, namely, by shrinking each of the initial and terminal spheres within its smallest cubical covering.
  The additional width cost is bounded in the same way; namely, replace $w$ with $2w+4$.
  We then obtain discrete $(k-1)$-spheres $S_n$ and $S'_n$ such that
  \[
  S_n\cap\Lm_1\neq\emptyset,\quad
  S'_n\cap(\Lm_{n-1}^\circ)^c\neq\emptyset,\quad
  \text{and}\quad S_n\approx_{2w+4}S'_n.
  \]
  Here, $\Lm_{n-1}^\circ$ denotes the interior of $\Lm_{n-1}$.
  By Lemma~\ref{lem:reach-join}~(from join to reachability), there exist $\Dl_n,\Dl'_n\in V_{2(2w+4)+4}=V_{4w+12}$ with $S_{\Dl_n}=S_n$ and $S_{\Dl'_n}=S'_n$ such that
  \[
  \Dl_n\rightsquigarrow_{4w+12}\Dl'_n.
  \]
  Since there are only finitely many (depending on $w$) vertices in $V_{4w+12}$ whose base spheres intersect $\Lm_1$, and $n$ can be taken arbitrarily large, it follows that $|\cC_{4w+12}^+(\Dl_N)|=\infty$ for some $N\in\N$.
  Since the directed graph $G_{4w+12}$ is locally finite, this implies~(e.g.,~by K\H onig's lemma) that
  \[
  \Dl_N\rightsquigarrow_{4w+12}\infty.
  \]
  Applying Lemma~\ref{lem:r-j_infty}~(from reachability to join), we conclude that
  \[
  S_{\Dl_N}\approx_{4w+12}\infty.
  \]
  Recalling that $S_{\Dl_N}=S_N$, we obtain
  \[
  S_N\approx_{4w+12}\infty
  \quad\text{with}\quad
  S_N\cap\Lm_1\neq\emptyset.
  \]
  Finally, if necessary, we may modify the infinite open $k$-tube witnessing the above join relation so that its initial sphere intersects $\Lm_o$, at the cost of increasing the width by at most one.
  Indeed, since the discrete $(k-1)$-sphere $S_N$ was obtained by shrinking the original $k$-tube $\gm_N$, there exists an incident open $k$-plaquette intersecting $\Lm_o$, allowing a slight modification of the initial sphere.
  Therefore, the right-hand event in~\eqref{eq:approx_Tw_2nd} occurs, completing the proof.
\end{proof}

\section{Sharp Threshold for Tubular One-Arm Events}
\label{sec:sharp_1-arm}
In this section, we prove Theorem~\ref{thm:sharp_1-arm}.
Recall that, for each $w>0$ and $n\in\N$,
\[
\theta_{w,n}^{\tube}(p)
\!:=\P_p(\cT_{w,n})
=\P_p\left(
\begin{array}{l}
  \text{$\exists$ an open $k$-tube of width at most $w$ whose initial sphere}\\
  \text{intersects $\Lm_o$ and terminal sphere intersects $\del\Lm_n$}
\end{array}
\right).
\]
As noted below~\eqref{eq:1-arm_event}, when $n\le1/2+w$, we set $\theta_{w,n}^{\tube}(p)=1$ by convention.
We further define
\[
\theta_w^{\tube}(p)
:=\lim_{n\to\infty}\theta_{w,n}^{\tube}(p)
\]
and the \emph{critical probability} $p_c^{\tube}(w)=p_c^{\tube}(w,k,d)$ by
\[
p_c^{\tube}(w):=\inf\{p\in[0,1]\mid\theta_w^{\tube}(p)>0\}.
\]
Note that $\lim_{w\to\infty}\theta_w^{\tube}(p)=\theta^{\tube}(p)$ by Proposition~\ref{prop:approx}.
Therefore, $p_c^{\tube}(w)\searrow p_c^{\tube}$ as $w\to\infty$.

The following proposition shows that the tubular one-arm event exhibits a sharp threshold at $p_c^{\tube}(w)$: below $p_c^{\tube}(w)$, the tubular one-arm probability $\theta_{w,n}^{\tube}(p)$ decays exponentially in $n$, whereas above $p_c^{\tube}(w)$, it admits a mean-field-type lower bound.
\begin{prop}\label{prop:sharp_1-arm_w}
  Let $1\le k\le d-1$ and $w>0$.
  Then the following statements hold$:$
  \begin{enumerate}
    \item If $p<p_c^{\tube}(w)$, then there exists a constant $c_{p,w}>0$ such that
    \[
    \theta_{w,n}^{\tube}(p)\le\exp(-c_{p,w}n)
    \]
    for any integer $n>1/2+w$.
    \item There exists a constant $\kappa>0$, independent of $w$, such that
    \[
    \theta_w^{\tube}(p)\ge\kappa(p-p_c^{\tube}(w))
    \]
    for all $p>p_c^{\tube}(w)$.
  \end{enumerate}
\end{prop}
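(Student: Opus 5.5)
We follow the OSSS approach of Duminil-Copin, Raoufi and Tassion. The key estimate is a differential inequality for $\theta_n:=\theta_{w,n}^{\tube}(p)$ of the form
\begin{equation}\label{eq:diff_ineq_plan}
\frac{d}{dp}\theta_n\;\ge\;\frac{c\,n}{\Sigma_n}\,\theta_n(1-\theta_n),
\qquad
\Sigma_n:=\sum_{m=0}^{n-1}\theta_m,
\end{equation}
valid for $p$ in a compact subinterval of $(0,1)$ and with $c>0$ depending only on $k,d$ (and on the interval). Once \eqref{eq:diff_ineq_plan} is established, the abstract lemma of Duminil-Copin--Raoufi--Tassion applies to the nonincreasing sequence $(\theta_n)$ of increasing differentiable functions. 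It yields a threshold $p_1$ such that below $p_1$ the decay is exponential and above $p_1$ we have $\lim_n\theta_n\ge\kappa(p-p_1)$. Item (1) then forces $p_1\le p_c^{\tube}(w)$, and item (2) forces $p_1\ge p_c^{\tube}(w)$, so $p_1=p_c^{\tube}(w)$. Both statements of the proposition follow, with $\kappa$ coming only from the constant $c$. To make $\kappa$ independent of $w$, the OSSS and revealment bounds must be arranged so that $c$ does not depend on $w$.

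To obtain \eqref{eq:diff_ineq_plan}, we combine the OSSS inequality with Russo's formula, since $\cT_{w,n}$ is increasing. OSSS gives
\[
\theta_n(1-\theta_n)\le\sum_{Q}\delta_Q(T)\,\Inf_Q(\cT_{w,n}),
\qquad
\frac{d}{dp}\theta_n=\frac{1}{p(1-p)}\sum_Q \Inf_Q(\cT_{w,n})\cdot(\text{const}),
\]
so it suffices to exhibit, for every $m\in\{1,\dots,n\}$, a decision tree $T_m$ determining $\cT_{w,n}$ whose revealment satisfies $\delta_Q(T_m)\lesssim \P_p(\text{some tube of width}\le w\text{ from a sphere near }Q\text{ reaches distance }|\,\|Q\|_\infty-m|)$. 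Averaging over $m$ then bounds $\sum_m \delta_Q(T_m)$ by $C\Sigma_n$. For this bound to hold uniformly in $Q$, we need translation invariance together with the monotonicity of $\theta$ in $n$.

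The algorithm $T_m$ runs as follows.
\begin{itemize}
\item It starts from every $k$-collar (of width $\le w$ in the graph $G_w$ of Definition~\ref{df:associated}) whose base sphere meets $\del\Lm_m$.
\item It explores the out- and in-components in $G_w$ restricted to $\Lm_n$, revealing plaquettes only within the locality radius $2w+4$ of already-reached collars.
\end{itemize}
By Lemma~\ref{lem:reach-join} and the intermediate value theorem, any tube from $\Lm_o$ to $\del\Lm_n$ has an intermediate sphere crossing $\del\Lm_m$. The compatibility/collar structure lets us split the tube there, so the exploration from $\del\Lm_m$ determines the event.

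The main obstacle is exactly this step: the join relation is not transitive, and the width changes between $w$ and $2w+4$ in Lemma~\ref{lem:reach-join}. Because of this, the event explored by the algorithm is not literally $\cT_{w,n}$, and the revealment is controlled by one-arm probabilities with a different width. I would try to resolve this by running the whole argument for a modified event defined via reachability in $G_w$ (which is transitive by construction), proving sharpness for it, and then sandwiching $\cT_{w,n}$ between such events with comparable widths, using Proposition~\ref{prop:approx} to identify the limiting critical points.
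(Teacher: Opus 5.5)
\paragraph{Gap: the algorithm on $G_w$ and the sandwich fix.}
Your exploration on $G_w$ does not determine $\cT_{w,n}$, as you yourself note. By Lemma~\ref{lem:reach-join}, a width-$w$ witness tube is only guaranteed to be represented in a graph of larger width. Your proposed repair cannot then yield the statement as posed. Let $\mathcal{R}_{w',n}$ be the reachability event and $\hat p(w')$ its threshold. The sandwich $\mathcal{R}_{w,n}\subset\cT_{w,n}\subset\mathcal{R}_{w',n}$, with $w'>w$ (e.g.\ $2w+4$ or more after discretising the end spheres), gives:
\begin{itemize}
\item exponential decay of $\theta_{w,n}^{\tube}$ only for $p<\hat p(w')$;
\item the linear lower bound only for $p>\hat p(w)$;
\item and only the comparison $\hat p(w')\le p_c^{\tube}(w)\le\hat p(w)$.
\end{itemize}
Nothing identifies either endpoint with $p_c^{\tube}(w)$ at fixed $w$. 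Proposition~\ref{prop:approx} only controls the limit $w\to\infty$, so at best you would recover Theorem~\ref{thm:sharp_1-arm}, not this proposition. There is a second problem. Revealing every plaquette within distance $2w+4$ of reached collars makes the revealment bound for $Q$ a union over order $w^d$ lattice points near $Q$. Your constant $c$, and hence $\kappa$, would then depend on $w$.

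\paragraph{Missing idea: split tubes, do not glue them.}
Transitivity is never needed, because the argument only has to \emph{split} tubes, not glue them. Splitting requires no collars: restricting a $k$-tube to a subinterval, or reversing its time parameter, again gives a $k$-tube. The paper's algorithm $\cA_m$ works directly with topological spheres.
\begin{itemize}
\item It starts from $\cS_0$, the spheres of diameter at most $w$ meeting $\del\Lm_m$.
\item It repeatedly queries the smallest unqueried plaquette touching a sphere joined to some element of $\cS_0$ by a single open tube of width at most $w$, supported on already queried plaquettes.
\end{itemize}
By the intermediate value theorem, any witness for $\cT_{w,n}$ has a level sphere in $\cS_0$. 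Its two halves (the first one reversed) are width-$w$ tubes from that sphere, so the exploration reaches the whole witness and determines the event. If $Q$ is revealed, some sphere touching $Q$ is joined with width at most $w$ to $\del\Lm_m$. Hence $\dl_p(Q,f_{w,n};\cA_m)\le\sum_{x\in Q\cap\Z^d}\theta_{w,|m-\|x\|_\infty|}^{\tube}(p)$, with the same $w$ and a $w$-free factor $2^k$. Averaging over $m$ gives $2^{k+1}\frac1n\sum_{j<n}\theta_{w,j}^{\tube}(p)$.

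\paragraph{Uniformity of $\kappa$ in $w$.}
For $\kappa$ to be independent of $w$ you also need a $p$-window chosen uniformly in $w$, which you leave unaddressed. The paper treats $w<1$ as trivial. For $w\ge1$ it uses $p_c^{\tube}(w)\le p_c^{\tube}(1)<1$ to fix $p_{\max}\in(p_c^{\tube}(1),1)$ independently of $w$. It then removes the factor $1-\theta_{w,n}^{\tube}$ via $1-\theta_{w,n}^{\tube}(p)\ge(1-p_{\max})^{\binom{2d}k}$ (no open plaquette at the origin). Only then does it apply the Duminil-Copin--Raoufi--Tassion lemma.
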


Theorem~\ref{thm:sharp_1-arm} follows immediately from Proposition~\ref{prop:sharp_1-arm_w}.
\begin{proof}[Proof of Theorem~\ref{thm:sharp_1-arm}]
  (1)~Suppose that $p<p_c^{\tube}$, and fix $w>0$.
  Since $p<p_c^{\tube}\le p_c^{\tube}(w)$, applying Proposition~\ref{prop:sharp_1-arm_w}(1) yields the first statement.

  (2)~Let $\kappa>0$ be the constant appearing in Proposition~\ref{prop:sharp_1-arm_w}(2), and suppose that $p>p_c^{\tube}$.
  Since $p_c^{\tube}(w)\searrow p_c^{\tube}$ as $w\to\infty$, we can choose $w$ sufficiently large so that $p_c^{\tube}\le p_c^{\tube}(w)<p$.
  Applying Proposition~\ref{prop:sharp_1-arm_w}(2), we have
  \[
  \theta_w^{\tube}(p)\ge\kappa(p-p_c^{\tube}(w)).
  \]
  Letting $w\to\infty$ on both sides of the above inequality yields
  \[
  \theta^{\tube}(p)\ge\kappa(p-p_c^{\tube}),
  \]
  which completes the proof.
\end{proof}

To prove Proposition~\ref{prop:sharp_1-arm_w}, we apply the OSSS inequality, whose effectiveness depends on the choice of algorithm that sequentially reveals the bits of restricted $k$-plaquette configuration $\eta\in\{0,1\}^{\cK_k^d(\Lm_n)}$ until the occurrence of the tubular one-arm event $\cT_{w,n}$ is determined.
In the next section, we briefly review the OSSS inequality.

\subsection{The OSSS Inequality}
Let $N\in\N$.
An \emph{algorithm} $\cA=(\psi_t)_{t=1}^N$ receives $\eta\in\{0,1\}^N$ as input and produces a sequence $(i_1,i_2,\ldots,i_N)$ of distinct indices in $\{1,2,\ldots,N\}$, according to the following inductive procedure:
for each $t=1,2,\ldots,N$,
\[
i_t=\psi_t(i_1,i_2,\ldots,i_{t-1};\eta_{i_1},\eta_{i_2},\ldots,\eta_{i_{t-1}}),
\]
where each $\psi_t$ is a deterministic rule that selects the next index $i_t$ to be queried based on the previously queried indices $i_1,i_2,\ldots,i_{t-1}$ and their revealed values $\eta_{i_1},\eta_{i_2},\ldots,\eta_{i_{t-1}}$.
Given a Boolean function $f\colon\{0,1\}^N\to\{0,1\}$ and an input $\eta\in\{0,1\}^N$, we define a \emph{stopping time} $\tau_f=\tau_f(\eta)$ as the smallest $0\le t\le N$ such that for all $x\in\{0,1\}^N$ satisfying $(x_{i_1},x_{i_2},\ldots,x_{i_t})=(\eta_{i_1},\eta_{i_2},\ldots,\eta_{i_t})$, it holds that $f(x)=f(\eta)$.
For an algorithm $\cA$, the probability
\[
\dl_p(i,f;\cA):=\P_p(i_t=i\text{ for some }t\le\tau_f)
\]
is called the \emph{revealment probability} of the $i$th bit for $f$ under the algorithm $\cA$.

\begin{lem}[The OSSS inequality~\cite{OSSS2005}]\label{lem:OSSS}
  Let $f\colon\{0,1\}^N\to\{0,1\}$ be an increasing Boolean function.
  For any algorithm $\cA$,
  \[
  \Var_p(f)\le p(1-p)\sum_{i=1}^N\dl_p(i,f;\cA)\Inf_p(i,f).
  \]
  Here,
  \[
  \Inf_p(i,f):=\E_p[|f-(f\circ\mathrm{Flip}_i)|]
  \]
  denotes the influence of the $i$th bit on $f$, where $\mathrm{Flip}_i\colon\{0,1\}^N\to\{0,1\}^N$ is the map that flips the $i$th value.
\end{lem}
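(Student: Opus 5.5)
The inequality is the result of O'Donnell, Saks, Schramm and Servedio~\cite{OSSS2005}, and in the paper it is simply quoted. If a self-contained argument were wanted, I would use the standard coupling (hybrid) argument, which works for every Boolean function; monotonicity of $f$ is not needed. Let $\eta$ and $\xi$ be independent samples of $\P_p$ on $\{0,1\}^N$. Run the algorithm $\cA$ on $\eta$, which produces the query sequence $(i_1,\ldots,i_N)$ and the stopping time $\tau=\tau_f(\eta)$. Since $f$ is $\{0,1\}$-valued,
\[
\E\bigl[|f(\eta)-f(\xi)|\bigr]=2\Var_p(f),
\]
so it suffices to bound the left-hand side. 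For $0\le t\le N$ define the hybrid configuration $\eta^t$ by
\[
\eta^t_{i_s}:=\eta_{i_s}\ \text{for } s\le t,
\qquad
\eta^t_j:=\xi_j\ \text{for all other } j.
\]
Then $\eta^0=\xi$. Also $\eta^\tau$ agrees with $\eta$ on the queried coordinates $i_1,\ldots,i_\tau$, so the definition of $\tau$ gives $f(\eta^\tau)=f(\eta)$. Telescoping over $t$ yields
\[
2\Var_p(f)\le\sum_{t=1}^N\E\bigl[|f(\eta^t)-f(\eta^{t-1})|\,\1_{\{t\le\tau\}}\bigr].
\]

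The key step is to evaluate each summand by conditioning. Let $\cF_{t-1}$ be the $\sg$-field generated by $i_1,\ldots,i_{t-1}$ and the revealed values $\eta_{i_1},\ldots,\eta_{i_{t-1}}$. The index $i_t=\psi_t(\cdots)$ and the event $\{t\le\tau\}$ are both $\cF_{t-1}$-measurable, since $\tau\ge t$ means the value of $f$ is not yet determined after $t-1$ queries.

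Conditionally on $\cF_{t-1}$ and $i_t=i$, the configuration $\eta^{t-1}$ has exactly the law $\P_p$. Its first $t-1$ queried coordinates follow the revealed values of $\eta$, which are themselves distributed as $\P_p$ by the usual adaptive-sampling fact. All remaining coordinates come from the independent sample $\xi$. Moreover, $\eta^t$ is obtained from $\eta^{t-1}$ by replacing coordinate $i$ with the fresh bit $\eta_i$, which is still unrevealed and hence independent $\mathrm{Bernoulli}(p)$. The two values at $i$ differ with probability $2p(1-p)$, independently of the other coordinates, and when they differ, $|f(\eta^t)-f(\eta^{t-1})|=|f-f\circ\mathrm{Flip}_i|$ evaluated at a $\P_p$-distributed point. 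Therefore
\[
\E\bigl[|f(\eta^t)-f(\eta^{t-1})|\,\big|\,\cF_{t-1},\,i_t=i\bigr]=2p(1-p)\Inf_p(i,f).
\]

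Substituting this into the telescoping bound and summing over $t$ and $i$ gives
\[
2\Var_p(f)\le 2p(1-p)\sum_{i=1}^N\Inf_p(i,f)\,\P_p(i_t=i\text{ for some } t\le\tau).
\]
The last probability is the revealment $\dl_p(i,f;\cA)$, and dividing by $2$ proves the lemma.

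The only delicate point is the conditional-independence claim in the key step. It has to be checked carefully that conditioning on the adaptive history $\cF_{t-1}$ leaves the unrevealed coordinates of $\eta$ i.i.d.\ $\mathrm{Bernoulli}(p)$. This holds because the rules $\psi_t$ are deterministic functions of already revealed data only, so one can argue by induction on $t$.
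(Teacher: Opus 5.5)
Your key step is false as stated, because the hybrid is built the wrong way round. In your $\eta^{t-1}$ the coordinates $i_1,\ldots,i_{t-1}$ carry the revealed values $\eta_{i_1},\ldots,\eta_{i_{t-1}}$. These are $\cF_{t-1}$-measurable, so conditionally on $\cF_{t-1}$ and $i_t=i$ they are frozen, and $\eta^{t-1}$ is not $\P_p$-distributed. It is $\P_p$-distributed unconditionally, but that does not help: you need the expectation against $\1_{\{t\le\tau,\,i_t=i\}}$, which is correlated with the revealed values. The true conditional expectation is $2p(1-p)$ times the conditional probability that $i$ is pivotal given the revealed values. An adaptive algorithm may query $i$ precisely when this probability is large.

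Concretely, take $N=2$, $f(x)=x_1x_2$, and the algorithm that queries bit $1$ and then bit $2$, so $\tau=1+\eta_1$. Your $t=2$ summand is $\E[\eta_1|\eta_2-\xi_2|]=2p^2(1-p)$. Your formula instead gives $2p(1-p)\Inf_p(2,f)\,p=2p^3(1-p)$. Your telescoping sum therefore equals $4p^2(1-p)$. For $p<1$ this is strictly larger than $2p(1-p)\sum_i\dl_p(i,f;\cA)\Inf_p(i,f)=2p^2(1-p)(1+p)=2\Var_p(f)$. So the second inequality in your chain fails with this hybrid. The fact you single out as delicate, that the unrevealed coordinates of $\eta$ stay i.i.d.\ given $\cF_{t-1}$, is true, but your hybrid uses $\eta$ only on revealed coordinates and at $i_t$.

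The repair is to swap the roles of $\eta$ and $\xi$ in the hybrid, as in the original OSSS argument, which the paper cites without proof. Set $\eta^t_{i_s}:=\xi_{i_s}$ for $s\le t$ and $\eta^t_j:=\eta_j$ otherwise, so that $\eta^0=\eta$. Then compare $\eta$ with $z:=\eta^\tau$ rather than with $\xi$. Given $\cF_\tau$, the unqueried coordinates of $\eta$ are i.i.d.\ Bernoulli$(p)$, and the queried ones have been replaced by independent $\xi$-values. Hence $z$ is $\P_p$-distributed and independent of $\cF_\tau$, so also of $f(\eta)$, and $\E[|f(\eta)-f(z)|]=2\Var_p(f)$. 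In the telescoping, work conditionally on $\cF_{t-1}$ on $\{t\le\tau,\,i_t=i\}$. There $\eta^{t-1}$ has fresh $\xi$-values on $i_1,\ldots,i_{t-1}$ and unrevealed $\eta$-values elsewhere, so it really is $\P_p$-distributed. Moreover, $\eta^t$ is obtained from it by resampling coordinate $i$ with $\xi_i$. With this hybrid your conditional identity $2p(1-p)\Inf_p(i,f)$ is correct, and summing over $t$ gives the lemma. Your remark that monotonicity of $f$ is not needed is right.
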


For the proof of Theorem~\ref{thm:sharp_1-arm}, the following differentiation formula is also useful.
\begin{lem}[The Margulis--Russo formula~\cite{Ma1974,Ru1981}]\label{lem:Russo}
  Let $f\colon\{0,1\}^N\to\{0,1\}$ be an increasing Boolean function.
  Then,
  \[
  \frac d{dp}\E_p[f]=\sum_{i=1}^N\Inf_p(i,f)
  \]
\end{lem}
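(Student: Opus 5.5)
The plan is the standard multi-parameter argument: give each bit its own parameter, compute each partial derivative exactly, identify it with the influence, and then sum using the chain rule. Since $N$ is finite and everything is a polynomial, no analytic difficulty arises.

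Let $\vec p=(p_1,\ldots,p_N)\in[0,1]^N$, and let $\P_{\vec p}$ be the product measure on $\{0,1\}^N$ under which the bits are independent with $\P_{\vec p}(\eta_i=1)=p_i$. Then
\[
\E_{\vec p}[f]=\sum_{\eta\in\{0,1\}^N}f(\eta)\prod_{j=1}^N p_j^{\eta_j}(1-p_j)^{1-\eta_j}
\]
is a polynomial in $(p_1,\ldots,p_N)$, hence smooth. Moreover, $\E_p[f]=\E_{(p,\ldots,p)}[f]$, so by the chain rule
\[
\frac d{dp}\E_p[f]=\sum_{i=1}^N\frac{\partial}{\partial p_i}\E_{\vec p}[f]\Big|_{\vec p=(p,\ldots,p)}.
\]

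To compute the $i$th partial derivative, I will write $\eta^{i,1}$ and $\eta^{i,0}$ for the configurations obtained from $\eta$ by setting the $i$th bit to $1$ and to $0$, respectively. Conditioning on the $i$th bit gives
\[
\E_{\vec p}[f]=p_i\,\E_{\vec p}[f(\eta^{i,1})]+(1-p_i)\,\E_{\vec p}[f(\eta^{i,0})].
\]
The two expectations on the right do not depend on $p_i$, since $\eta^{i,1}$ and $\eta^{i,0}$ do not depend on the $i$th coordinate and the coordinates are independent. Differentiating in $p_i$ therefore yields
\[
\frac{\partial}{\partial p_i}\E_{\vec p}[f]=\E_{\vec p}\bigl[f(\eta^{i,1})-f(\eta^{i,0})\bigr].
\]

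It remains to identify this with $\Inf_p(i,f)$, and this is the only place where monotonicity of $f$ enters. Since $f$ is increasing, $f(\eta^{i,1})-f(\eta^{i,0})\ge0$ pointwise. The quantity $|f(\eta)-f(\mathrm{Flip}_i\eta)|$ does not depend on $\eta_i$: it equals $|f(\eta^{i,1})-f(\eta^{i,0})|$, which by monotonicity is exactly $f(\eta^{i,1})-f(\eta^{i,0})$. Hence
\[
\Inf_p(i,f)=\E_p\bigl[f(\eta^{i,1})-f(\eta^{i,0})\bigr]=\frac{\partial}{\partial p_i}\E_{\vec p}[f]\Big|_{\vec p=(p,\ldots,p)}.
\]
Summing over $i$ gives the formula.

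I do not expect a genuine obstacle. The only points requiring care are the independence of $f(\eta^{i,\cdot})$ from $p_i$ and the sign identification via monotonicity. Without monotonicity, the same computation still gives the formula with $\Inf_p(i,f)$ replaced by the signed quantity $\E_p[f(\eta^{i,1})-f(\eta^{i,0})]$.
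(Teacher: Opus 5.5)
Your proof is correct. The paper gives no proof of this lemma and simply cites Margulis and Russo; your argument is the standard proof of that result: a separate parameter $p_i$ for each bit, conditioning on bit $i$ to get $\partial_{p_i}\E_{\vec p}[f]=\E_{\vec p}[f(\eta^{i,1})-f(\eta^{i,0})]$, the chain rule along the diagonal $\vec p=(p,\ldots,p)$, and monotonicity to identify each term with the paper's flip-based influence $\Inf_p(i,f)=\E_p[|f-f\circ\mathrm{Flip}_i|]$.
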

\begin{rem}
  Once we construct an algorithm $\cA$ that determines $f_N$ and whose revealment probabilities $\dl_p(i,f_N;\cA)$ ($i=1,2,\ldots,N$) are all small (say, $p(1-p)\dl_p(i,f_N;\cA)\le1/N$), Lemmas~\ref{lem:OSSS} and~\ref{lem:Russo} together imply that
  \[
  \frac d{dp}\E_p[f_N]
  \ge N\Var_p(f_N)
  =N\E_p[f_N](1-\E_p[f_N]).
  \]
  Therefore,
  \[
  \frac d{dp}\biggl(\log\frac{\E_p[f_N]}{1-\E_p[f_N]}\biggr)
  =\frac{\frac d{dp}\E_p[f_N]}{\E_p[f_N](1-\E_p[f_N])}
  \ge N.
  \]
  Assume that there exists $p_c\in(0,1)$ and constants $0<\a\le\b<1$ such that $\a\le\E_{p_c}[f_N]\le\b$ for all $N\in\N$.
  Then for any fixed $\dl>0$, integrating the above differential inequality over $[p_c-\dl,p_c]$ and $[p_c,p_c+\dl]$ yields, respectively,
  \[
  \E_{p_c-\dl}[f_N]\le\frac\b{1-\b}\exp(-\dl N)
  \quad\text{ and }\quad
  \E_{p_c+\dl}[f_N]\ge1-\frac{1-\a}\a\exp(-\dl N).
  \]
  This implies that the Boolean function $f_N$ exhibits a sharp threshold at $p_c$ as $N\to\infty$.
  However, identifying such $p_c$ is generally difficult unless the model possesses an appropriate self-duality.
  We therefore follow an approach based on a general calculus lemma~(Lemma~\ref{lem:calculus}), developed in~\cite{DRT2019}.
\end{rem}

\subsection{Algorithm for Proposition~\ref{prop:sharp_1-arm_w}}
In this section, we describe a family $\{\cA_m\}_{m=1}^n$ of algorithms that determine a Boolean function $f_{w,n}:=\1_{\cT_{w,n}}$, where $\cT_{w,n}$ is the tubular one-arm event.
We may regard the Boolean function $f_{w,n}$ as a function on $\{0,1\}^{\cK_k^d(\Lm_n)}$.
We write $N:=|\cK_k^d(\Lm_n)|$.

Now, for a fixed $m=1,2,\ldots,n$, we define an algorithm $\cA_m$ that produces a tuple
\[
(Q_1,Q_2,\ldots,Q_N)
\]
of distinct $k$-plaquettes in $\Lm_n$, while simultaneously constructing a growing sequence:
\[
\cS_0\subset\cS_1\subset\cdots\subset\cS_N\subset\scrS_{k-1},
\]
where $\scrS_{k-1}$ denotes the set of all topological $(k-1)$-spheres in $\R^d$.
We set
\[
\cS_0=\{S\in\scrS_{k-1}\mid S\cap\del\Lm_m\neq\emptyset\,\text{ and }\diam(S)\le w\}.
\]
For each step $t=1,2,\ldots,N$, we define $\cS_t$ and $Q_t$ from the already constructed sequences
\[
\cS_0\subset\cS_1\subset\cdots\subset\cS_{t-1}\quad\text{and}\quad(Q_1,Q_2,\ldots,Q_{t-1})
\]
as follow:
\begin{itemize}
  \item[(Case~1)] If there exists a $k$-plaquette $Q\in\cK_k^d(\Lm_n)\setminus\{Q_1,Q_2,\ldots,Q_{t-1}\}$ that intersects a topological sphere $S\in\cS_{t-1}$, then choose the smallest such $k$-plaquette $Q$ with respect to a prescribed total order on $\cK_k^d(\Lm_n)$ and set $Q_t:=Q$, and define
  \[
  \cS_t:=\cS_{t-1}\cup\left\{S\in\scrS_{k-1}\relmiddle|
  \begin{array}{l}
    \text{$S$ is joined to a topological sphere in $\cS_0$ by an open}\\
    \text{$k$-tube of width at most $w$ that is supported on the}\\
    \text{already queried $k$-plaquettes $\{Q_1,Q_2,\ldots,Q_{t-1},Q\}$}
  \end{array}
  \right\};
  \]
  \item[(Case~2)] If no such $k$-plaquette as in Case~1 exists, then set $Q_t$ to be the smallest $Q\in\cK_k^d(\Lm_n)\setminus\{Q_1,Q_2,\ldots,Q_{t-1}\}$ with respect to the prescribed total order on $\cK_k^d(\Lm_n)$, and set $\cS_t:=\cS_{t-1}$.
\end{itemize}
Note that once Case~1 is completed, Case~2 applies to all subsequent steps.
Furthermore, the stopping time $\tau_{f_{w,n}}$ for this algorithm $\cA_m$ is no later than the last time Case~1 occurs.
If a plaquette $Q\in\cK_k^d(\Lm_n)$ is queried until the stopping time $\tau_{f_{w,n}}$ (i.e., $Q_t=Q$ for some $t\le\tau_{f_{w,n}}$), then by the choice of $Q_t$ in Case~1, there exists an $S\in\scrS_{k-1}$ intersecting $Q$ such that $S$ is joined to a topological sphere in $\cS_0$ by an open $k$-tube of width at most $w$.
This observation provides the following upper bound on the revealment probability of $Q\in\cK_k^d(\Lm_n)$ for $f_{w,n}$ under the algorithm $\cA_m$:
\begin{align*}
&\dl_p(Q,f_{w,n};\cA_m)\\
&\le\P_p\Biggl(\bigcup_{x\in Q\cap\Z^d}\left\{
\begin{array}{l}
  \text{$\exists$ an open $k$-tube of width at most $w$ whose initial sphere}\\
  \text{intersects $x+\Lm_o$ and terminal sphere intersects $\del\Lm_m$}
\end{array}
\right\}\Biggr)\\
&\le\sum_{x\in Q\cap\Z^d}\theta_{w,|m-\|x\|_\infty|}^{\tube}(p).
\end{align*}
Therefore, for each $Q\in\cK_k^d(\Lm_n)$,
\begin{align*}
\sum_{m=1}^n\dl_p(Q,f_{w,n};\cA_m)
&\le\sum_{m=1}^n\sum_{x\in Q\cap\Z^d}\theta_{w,|m-\|x\|_\infty|}^{\tube}(p)\\
&\le2\sum_{x\in Q\cap\Z^d}\sum_{j=0}^{n-1}\theta_{w,j}^{\tube}(p)\\
&\le2^{k+1}\sum_{j=0}^{n-1}\theta_{w,j}^{\tube}(p).
\end{align*}
Combining the above estimate with Lemmas~\ref{lem:OSSS} and~\ref{lem:Russo}, we obtain
\begin{align*}
\Var_p(f_{w,n})
&\le p(1-p)\sum_{Q\in\cK_k^d(\Lm_n)}\Biggl(\frac1n\sum_{m=1}^n\dl_p(Q,f_{w,n};\cA_m)\Biggr)\Inf_p(Q,f_{w,n})\\
&\le2^{k+1}\Biggl(\frac1n\sum_{j=0}^{n-1}\theta_{w,j}^{\tube}(p)\Biggr)\cdot\frac d{dp}\theta_{w,n}^{\tube}(p).
\end{align*}
Let $0<p_{\max}<1$ be fixed.
Then for any $p\in[0,p_{\max}]$ and integer $n>1/2+w$, we have
\[
\Var_p(f_{w,n})
=\theta_{w,n}^{\tube}(p)(1-\theta_{w,n}^{\tube}(p))
\ge\theta_{w,n}^{\tube}(p)(1-p)^{\binom{2d}k}
\ge\theta_{w,n}^{\tube}(p)(1-p_{\max})^{\binom{2d}k}.
\]
For the first inequality above, we used the fact that $f_{w,n}:=\1_{\cT_{w,n}}=0$ whenever there exists no open $k$-plaquette containing the origin.
Consequently, we obtain the following self-referential differential inequality on the interval $(0,p_{\max})$ for each integer $n>1/2+w$:
\begin{equation}\label{eq:diff_ineq}
\frac d{dp}\theta_{w,n}^{\tube}(p)
\ge\frac{(1-p_{\max})^{\binom{2d}k}}{2^{k+1}}\Biggl(\frac1n\sum_{j=0}^{n-1}\theta_{w,j}^{\tube}(p)\Biggr)^{-1}\theta_{w,n}^{\tube}(p).
\end{equation}

\subsection{Proof of Proposition~\ref{prop:sharp_1-arm_w}}
We use the following fundamental lemma to solve the self-referential differential inequality~\eqref{eq:diff_ineq}.
We state it in a form convenient for our purpose.
\begin{lem}[{\cite[Lemma~3.1]{DRT2019}}]\label{lem:calculus}
  Let $(g_n)_{n=0}^\infty$ be a pointwise convergent sequence of differentiable, nondecreasing functions from $[0,p_{\max}]$ to $[0,M]$.
  Let $n_0\in\N$, and suppose that for $n\ge n_0$, the following differential inequality holds$:$
  \[
  \frac d{dp}g_n(p)\ge\Biggl(\frac1n\sum_{j=0}^{n-1}g_j(p)\Biggr)^{-1}g_n(p)\quad\text{for any $p\in(0,p_{\max})$}.
  \]
  Then there exists $\widetilde p\in[0,p_{\max}]$ such that the following statements hold$:$
  \begin{itemize}
    \item[(i)] If $p<\widetilde p$, then there exists a constant $c_{p,n_0}>0$ such that
    \[
    g_n(p)\le M\exp(-c_{p,n_0}n)
    \]
    for all $n\ge n_0$$;$
    \item[(ii)] For any $p>\widetilde p$, it holds that $\lim_{n\to\infty}g_n(p)\ge p-\widetilde p$.
  \end{itemize}
\end{lem}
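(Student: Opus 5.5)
We reproduce the argument of Duminil-Copin, Raoufi and Tassion. It is purely a statement about real functions, with no percolation input. Write $T_n(p):=\sum_{j=0}^{n-1}g_j(p)$, so that the hypothesis reads $g_n'\ge (n/T_n)\,g_n$ for $n\ge n_0$. Each $T_n$ is nondecreasing in $p$ and satisfies $T_n\le Mn$. We define the threshold by
\[
\widetilde p:=\sup\Bigl\{p\in[0,p_{\max}]\Bigm| \limsup_{n\to\infty}\frac{\log T_n(p)}{\log n}<1\Bigr\},
\]
with the convention $\sup\emptyset=0$. The two regimes are then treated separately.

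\emph{Subcritical regime (i).} Fix $p<\widetilde p$ and pick $p<p_2<p_1<\widetilde p$. Since $T_n$ is nondecreasing in $p$, there are $\delta>0$ and $C$ with $T_n(q)\le Cn^{1-\delta}$ for all $q\le p_1$ and all $n$.
\begin{itemize}
\item The differential inequality then gives $(\log g_n)'\ge n^{\delta}/C$ on $[p_2,p_1]$. Integrating yields $g_n(p_2)\le M\exp(-(p_1-p_2)n^{\delta}/C)$; when $g_n$ vanishes somewhere, we integrate only on the interval where it is positive.
\item This stretched-exponential bound makes $\sum_j g_j(p_2)$ finite. By monotonicity in $p$, we get $T_n(q)\le C'$ uniformly in $n$ and in $q\le p_2$.
\item Plugging this back in gives $(\log g_n)'\ge n/C'$ on $[p,p_2]$. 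Hence $g_n(p)\le M\exp(-(p_2-p)n/C')$ for $n\ge n_0$, which is (i).
\end{itemize}
The constant depends on $n_0$ only through the finitely many initial terms entering $C'$.

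\emph{Supercritical regime (ii).} Fix $p>\widetilde p$ and $p_1\in(\widetilde p,p)$, and set $h_n:=\frac1{\log n}\sum_{i=n_0}^{n}\frac{g_i}{i}$. Since $g_i\to g:=\lim g_i$ pointwise, Cesàro summation with logarithmic weights gives $h_n(q)\to g(q)$ for every $q$. For the derivative we use the hypothesis together with $g_i=T_{i+1}-T_i$ and $\log(1+x)\le x$:
\[
h_n'\ge\frac1{\log n}\sum_{i=n_0}^{n}\frac{g_i}{T_i}\ge\frac{\log T_{n+1}-\log T_{n_0}}{\log n}.
\]
For $q\in[p_1,p]$ we have $q>\widetilde p$, so $\log T_n(q)/\log n$ should be at least $1-o(1)$. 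Integrating over $[p_1,p]$ then gives $h_n(p)\ge h_n(p_1)+(p-p_1)(1-o(1))\ge(p-p_1)(1-o(1))$. Letting $n\to\infty$ and then $p_1\downarrow\widetilde p$ yields $\lim g_n(p)\ge p-\widetilde p$.

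\emph{Main obstacle.} The delicate point is the lower bound $\log T_n(q)\ge(1-o(1))\log n$ for \emph{all} large $n$, uniformly in $q\in[p_1,p]$. The definition of $\widetilde p$ only provides a $\limsup$. I would first argue as follows.
\begin{itemize}
\item If $g(p_1)>0$, then $T_n(q)\ge T_n(p_1)\sim ng(p_1)$, and the bound is immediate.
\item If $g(p_1)=0$, I would run the subcritical bootstrap backwards. If $\log T_n(q)\le(1-\delta)\log n$ held along a sufficiently dense set of scales, the integrated inequality would force decay at slightly smaller parameters. This would contradict $q>\widetilde p$ once $\widetilde p$ is redefined via $\liminf$ instead of $\limsup$.
\end{itemize}
So I expect to choose $\widetilde p$ through a $\liminf$, or through the quantity $\lim_n \log T_n/\log n$ where it exists, in order to make (i) and (ii) consistent. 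Checking this consistency is where the real care lies.
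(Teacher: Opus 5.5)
Your overall route is the one used by Duminil-Copin, Raoufi and Tassion; the paper does not reprove this lemma and only cites their Lemma~3.1. You use the same threshold defined through $\limsup_n\log T_n/\log n$, the same two-stage bootstrap for (i), and the same log-weighted Ces\`aro average $h_n$ for (ii). Part (i) is correct as written.

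The gap is at the end of (ii). There you ask for $\log T_n(q)\ge(1-o(1))\log n$ for \emph{all} large $n$, uniformly in $q\in[p_1,p]$. Your definition of $\widetilde p$ does not provide this. The remedy you sketch is to redefine $\widetilde p$ via a $\liminf$ and run the bootstrap ``backwards,'' but you do not carry it out. It also clashes with your own proof of (i). The first step there needs $T_n(q)\le Cn^{1-\delta}$ for \emph{every} $n$. A $\liminf$ condition only gives this along a subsequence, which does not make $\sum_j g_j(p_2)$ finite. So switching definitions would leave (i) unproved.

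The missing observation is that no bound for all large $n$ is needed. Since $T_{n+1}$ is nondecreasing in $q$ and $T_{n_0}\le Mn_0$, your derivative estimate gives $h_n'(q)\ge\bigl(\log T_{n+1}(p_1)-\log(Mn_0)\bigr)/\log n$ for all $q\in(p_1,p)$. By the mean value theorem, for \emph{every} $n>n_0$,
\[
h_n(p)-h_n(p_1)\ge(p-p_1)\,\frac{\log T_{n+1}(p_1)-\log(Mn_0)}{\log n}.
\]
The left-hand side converges to $g(p)-g(p_1)\le g(p)$. You may therefore pass to the limit along a subsequence that realizes the $\limsup$ of the right-hand side. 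Under your definition, $p_1>\widetilde p$ means exactly $\limsup_n\log T_n(p_1)/\log n\ge1$. This yields $g(p)\ge p-p_1$, and letting $p_1\downarrow\widetilde p$ gives (ii).

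So keep the $\limsup$ definition of $\widetilde p$: it makes both (i) and (ii) work, and your ``main obstacle'' disappears.
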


We are now ready to proof Proposition~\ref{prop:sharp_1-arm_w}.
\begin{proof}[Proof of Proposition~\ref{prop:sharp_1-arm_w}]
  When $w<1$, the statement is trivial.
Indeed, $\theta_{w,n}^{\tube}(p)=0$ for all $p\in[0,1]$ and all integers $n>1/2+w$, and consequently $p_c^{\tube}(w)=1$.
  We therefore assume that $w\ge1$.
  As seen in the proof of~\eqref{eq:s-tube},
  \[
  p_c^{\tube}(w)\le p_c^{\tube}(1)<1.
  \]
  Hence, we may choose $p_{\max}$ independently of $w$ such that $p_c^{\tube}(1)<p_{\max}<1$.
  Applying Lemma~\ref{lem:calculus} to~\eqref{eq:diff_ineq} with
  \[
  g_n(p)=\frac{2^{k+1}}{(1-p_{\max})^{\binom{2d}k}}\theta_{w,n}^{\tube}(p),
  \quad M=\frac{2^{k+1}}{(1-p_{\max})^{\binom{2d}k}},
  \quad\text{and}\quad n_0=\lfloor1/2+w\rfloor+1,
  \]
  we obtain $\widetilde p\in[0,p_{\max}]$ such that the following hold:
  \begin{itemize}
    \item If $p<\widetilde p$, then there exists a constant $c_{p,w}>0$ such that
    \[
    \theta_{w,n}^{\tube}(p)\le\exp(-c_{p,w}n)
    \]
    for all integers $n>1/2+w$;
    \item For any $p>\widetilde p$,
    \[
    \theta_w^{\tube}(p)\ge\kappa(p-\widetilde p),
    \]
    where $\kappa=(1-p_{\max})^{\binom{2d}k}/2^{k+1}$.
  \end{itemize}
  In particular, since $\theta_w^{\tube}(p)=\lim_{n\to\infty}\theta_{w,n}^{\tube}(p)$ is either zero for $p<\tilde p$ or positive for $p>\tilde p$, it follows by definition that $p_c^{\tube}(w)=\widetilde p$.
  Moreover, the constant $\kappa$ is independent of $w$, which completes the proof.
\end{proof}

\section{An Analogue of the Uniqueness of the Infinite Cluster}
\label{sec:uniqueness}
In this section, we prove Theorem~\ref{thm:uniqueness}.
To this end, we first establish a corresponding statement for the associated directed graph $G_w=G_w(\eta)$, which will also be used in Section~\ref{sec:box-crossing}.

\subsection{Reduction to the Associated Directed Graph}
In what follows, an infinite self-avoiding directed path $(\Dl_0,\Dl_1,\Dl_2,\ldots)\in V_w^{\Z_{\ge0}}$ as in Definition~\ref{df:reachability} will be called an \emph{out-ray} in $G_w$.
Similarly, an \emph{in-ray} in $G_w$ is a backward infinite self-avoiding directed path $(\ldots,\Dl_{-2},\Dl_{-1},\Dl_0)\in V_w^{\Z_{\le0}}$, and a \emph{double ray} in $G_w$ is a bi-infinite self-avoiding directed path $(\ldots,\Dl_{-1},\Dl_0,\Dl_1,\ldots)\in V_w^\Z$.
\begin{lem}\label{lem:uniqueness}
  Let $w>0$, and let $G_w=(V_w,E_w)$ be the associated directed graph.
  Then for every $p\in[0,1]$, the following holds $\P_p$-almost surely$:$
  If
  \[
  \pi^-=(\ldots,\Dl_{-2}^-,\Dl_{-1}^-,\Dl_0^-)\in V_w^{\Z_{\le0}}
  \quad\text{and}\quad
  \pi^+=(\Dl_0^+,\Dl_1^+,\Dl_2^+,\ldots)\in V_w^{\Z_{\ge0}}
  \]
  are in-ray and out-ray, respectively, then there exists a double ray
  \[
  \pi=(\ldots,\Dl_{-1},\Dl_0,\Dl_1,\ldots)\in V_w^\Z
  \]
  such that
  \begin{equation}\label{eq:tail_ray}
  \pi|_{\Z\cap(-\infty,-I]}=\pi^-|_{\Z\cap(-\infty,-I]}
  \quad\text{and}\quad
  \pi|_{\Z\cap[I,\infty)}=\pi^+|_{\Z\cap[I,\infty)}
  \end{equation}
  for some $I\in\N_0$.
\end{lem}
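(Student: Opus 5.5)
The plan is to reduce the statement to a Burton--Keane-type uniqueness argument, applied not to clusters (which do not exist here) but to the \emph{ends} of $G_w$ that are carried by rays. The first observation is that the graph $G_w$ has a reversal symmetry. Reversing the time parameter of an open $k$-tube gives again an open $k$-tube, and reversing the $I_\eps$-coordinate of a $k$-collar gives a $k$-collar $\bar\dl$. Hence $(\Dl,\Dl')\in E_w$ if and only if $(\bar\Dl',\bar\Dl)\in E_w$, where $\bar\Dl:=[\bar\dl]$. In particular, the reversal of an in-ray is an out-ray, so it suffices to work with out-rays throughout.

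Since $G_w$ is locally finite and only finitely many vertices have base sphere meeting a given box, every ray escapes to infinity. Call two out-rays $\pi_1,\pi_2$ \emph{equivalent} if, for all $i$, some vertex of $\pi_1$ beyond index $i$ reaches some vertex of $\pi_2$ beyond index $i$ in $G_w$, and conversely. Let $N(\eta)$ be the number of equivalence classes of out-rays together with classes of reversed in-rays; we identify an in-ray with an out-ray class via the same mutual-reachability relation applied to its tail vertices.

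The core step is the trichotomy $N\in\{0,1,\infty\}$ a.s., and then the exclusion of $N=\infty$. The quantity $N$ is translation invariant, so it is a.s.\ constant by ergodicity. The key finite-energy input is a local merging property: if all $k$-plaquettes in a box $\Lm_r$ are opened, then any two collars based at discrete spheres of diameter at most $w$ in $\Lm_{r-w-2}$ are mutually reachable inside $\Lm_r$. This holds because one can slide a sphere through the full complete cubical set, with prescribed entry and exit directions, using the explicit width-$1$ tubes from the proof of~\eqref{eq:s-tube}. So if $2\le N<\infty$ with positive probability, opening a large box merges all classes, which contradicts the a.s.\ constancy.

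To exclude $N=\infty$, I would run the Burton--Keane trifurcation count. Call a box $x+\Lm_r$ a trifurcation if, after an appropriate local modification, it meets three pairwise inequivalent rays that become mutually reachable only through the box. Finite energy gives a positive density of such boxes. A tree-counting argument on the disjoint ray tails emanating from trifurcation boxes in $\Lm_n$ then bounds their number by $|\partial\Lm_n|$, which contradicts amenability.

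Given $N\le1$, any in-ray $\pi^-$ and out-ray $\pi^+$ are equivalent. So there is a directed path from some $\Dl^-_{-i}$ to some $\Dl^+_j$, and we can take $i,j$ arbitrarily large. I would then splice: follow $\pi^-$ up to $\Dl^-_{-i}$, then the connecting path, then $\pi^+$ from $\Dl^+_j$ onwards. To make the result self-avoiding while keeping the tails, take the last vertex of the connecting path lying on $\pi^-$ and the first subsequent vertex lying on $\pi^+$. Then loop-erase the middle segment, and choose $i,j$ large enough that the far tails of $\pi^\pm$ are disjoint from the finitely many middle vertices. This yields~\eqref{eq:tail_ray} with a suitable $I$.

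The main obstacle I expect is the trifurcation step in the directed setting. Reachability is not symmetric, so the ``three branches through a box'' picture must be formulated with the reversal symmetry to guarantee that both incoming and outgoing directions are available at each trifurcation. The tree-counting must also use disjointness of ray tails, which are vertex-disjoint as collars but whose supporting plaquettes may overlap. I would try to handle this by counting disjoint branches through the $\ell^\infty$-separated boxes rather than through plaquette supports.
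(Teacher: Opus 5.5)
There is a genuine gap in the finite-energy step, and it comes from your choice of equivalence relation. Your relation $\pi_1\sim\pi_2$ is two-sided and is a pure tail condition: for every $i$ you need a directed path from the tail of $\pi_1$ beyond index $i$ into the tail of $\pi_2$ beyond index $i$, and also one back. Opening all plaquettes of $\Lm_r$ does not produce such paths in general. The vertex set $V_w$ does not depend on $\eta$, and $E_w(\eta)$ is increasing in $\eta$. By the locality condition, every newly created edge has both base spheres within distance $2w+4$ of $\Lm_r$. So a directed path starting far out on $\pi_1$ gains nothing from the modification unless, already in $\eta$, it can return to the vicinity of $\Lm_r$.

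An out-ray leaves the box, and nothing forces its tail to come back. Hence ``opening a large box merges all classes'' is unjustified, and neither the exclusion of $2\le N<\infty$ nor $N\le1$ is proved. For an in-ray $\pi^-$ and an out-ray $\pi^+$ the problem is worse. Your equivalence also demands a path from the far tail of $\pi^+$ to the far tail of $\pi^-$. That is a condition at infinity which no local modification creates, and the lemma does not need it. The reversal symmetry is correct, but it does not reduce the problem to out-rays. It turns $\pi^-\rightsquigarrow_w\pi^+$ into $\overline{\pi^+}\rightsquigarrow_w\overline{\pi^-}$, which is again an in-ray-to-out-ray question.

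The idea you are missing is to keep only the one-directional relation ``some vertex of $\pi^-$ reaches some vertex of $\pi^+$''. Unlike yours, this relation can be created locally: if $\pi^-$ ends and $\pi^+$ starts at spheres meeting $\Lm_n$, opening $\Lm_{n+\lceil2w+4\rceil}$ links them. Since it is not an equivalence relation, the paper counts pairs rather than classes.
\begin{itemize}
\item It calls a pair consisting of an in-end and an out-end (ends in Zuther's sense) undesired if no double ray joins their tails. It lets $M$ be the maximal number of undesired pairs with pairwise incomparable ends.
\item $M$ is a.s.\ constant, and the box-opening argument rules out a finite positive value.
\item For $M=\infty$, the paper passes to the undirected graph $G_w^{\mathrm{und}}$. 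A trifurcation is a $(k+1)$-plaquette whose closing splits an infinite undirected component into three.
\item The number of trifurcations in $\Lm_n$ is then bounded by $O(n^{d-1})$, via compatible $3$-partitions of the vertices based in $\Lm_{n+\lceil2w+4\rceil}\setminus\Lm_n$.
\end{itemize}
This undirected reformulation is exactly the part your ``tree-counting on disjoint ray tails'' leaves open.

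Finally, your splicing step only keeps the middle segment away from the tails. It does not treat the case where the tails of $\pi^-$ and $\pi^+$ share infinitely many vertices, and then no self-avoiding double ray can carry both tails. This case does occur. Take $p=1$, $k=2$, $d=3$, $w\ge2$, and let $C_j$ be the collar along the unit square pipe based at $\{j\}\times\del[0,1]^2$, pointing in the $+x_1$ direction. Then $(C_0,C_1,C_2,\ldots)$ is an out-ray and $(\ldots,C_4,C_2,C_0)$ is an in-ray, with each edge $C_{j+2}\to C_j$ realised by a U-turn tube. What can actually be proved is a directed path from a vertex of $\pi^-$ to a vertex of $\pi^+$, i.e.\ a bi-infinite directed walk with the prescribed tails. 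That is all the gluing of tubes in Theorem~\ref{thm:uniqueness} uses.
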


To prove Lemma~\ref{lem:uniqueness}, we employ a Burton--Keane-type argument based on the notion of trifurcation~\cite{BK1989}.
However, substantial modifications are required, since $G_w$ is a directed graph and one cannot consider its connected components directly.
To this end, we begin by reviewing the notion of \emph{ends for directed graphs}, as introduced in~\cite{Zu1998}.
For two out-rays $\pi$ and $\pi'$ in $G_w$, we write $\pi\lesssim\pi'$ if there exist infinitely many vertex-disjoint paths whose initial vertices lie in $\pi$ and terminal vertices lie in $\pi'$.
The relation $\lesssim$ is a preorder on the set of all out-rays.
We then define an equivalence relation $\sim$ by declaring that $\pi\sim\pi'$ if and only if $\pi\lesssim\pi'$ and $\pi'\lesssim\pi$.
Equivalently, $\pi\sim\pi'$ if and only if there exists an out-ray $\pi''$ in $G_w$ that contains infinitely many vertices of both of $\pi$ and $\pi'$.
For each out-ray $\pi$ in $G_w$, we call its equivalence class $[\pi]$ an \emph{out-end} of $G_w$.
Let $\Ends^+(G_w)$ denote the set of all out-ends of $G_w$.
The preorder $\lesssim$ on out-rays induces a partial order $\le$ on $\Ends^+(G_w)$.
The above notions are defined similarly for in-rays in $G_w$, and we obtain a partially ordered set $(\Ends^-(G_w),\le)$ as well.
In~\cite{Zu1998}, in-rays and out-rays are not distinguished; for our purposes, however, it is more convenient to treat them separately.

With this notion in place, we now prove Lemma~\ref{lem:uniqueness}.
We do so by showing that, almost surely, there are no pairs of in-ends and out-ends in $G_w$ that are not connected by a double ray.
\begin{proof}[Proof of Lemma~\ref{lem:uniqueness}]
  The statement is trivial if $p=0,1$, and hence we assume $0<p<1$.
  We say that a pair $([\pi^-],[\pi^+])\in\Ends^-(G_w)\times\Ends^+(G_w)$ is \emph{undesired} if there exists no double ray $\pi\in V_w^\Z$ satisfying~\eqref{eq:tail_ray} for some $I\in\N_0$.
  This notion is well defined, since the validity of~\eqref{eq:tail_ray} for some $I\in\N_0$ is independent of the choice of representatives of $[\pi^-]$ and $[\pi^+]$.
  Let $M$ denote the supremum over all integers $m\ge0$ for which there exist undesired pairs
  \[
  ([\pi_1^-],[\pi_1^+]),([\pi_2^-],[\pi_2^+]),\ldots,([\pi_m^-],[\pi_m^+])
  \]
  such that $[\pi_1^\pm],[\pi_2^\pm],\ldots,[\pi_m^\pm]$ are pairwise incomparable in $(\Ends^\pm(G_w),\le)$.
  Since $M$ is translation invariant (that is, $M=M\circ\tau_x$ for every translation $\tau_x\colon\Om\to\Om$ by $x\in\Z^d$), ergodicity implies that $M$ is almost surely constant.
  Therefore, there exists $m\in\{0\}\cup\N\cup\{\infty\}$ such that $\P_p(M=m)=1$.
  To prove the lemma, it suffices to rule out the cases $m\in\N$ and $m=\infty$, thereby concluding that $m=0$.

  \medskip
  We first rule out the case $m\in\N$.
  Assume that $m\in\N$.
  Let $E_n$ denote the event that, for every undesired pair $([\pi^-],[\pi^+])\in\Ends^-(G_w)\times\Ends^+(G_w)$, there exist representatives $\pi^-$ and $\pi^+$ of the corresponding ends such that the base spheres of the terminal vertex of the in-ray $\pi^-$ and the initial vertex of the out-ray $\pi^+$ both intersect $\Lm_n$.
  Since $m$ is finite,
  \[
  1=\P_p(M=m)\le\P_p\Biggl(\bigcup_{n\in\N}E_n\Biggr)=\lim_{n\to\infty}\P_p(E_n).
  \]
  Therefore, we may choose $n\in\N$ sufficiently large so that $\P_p(E_n)>0$.
  Define a modified configuration $\widetilde\eta$ by
  \[
  \widetilde\eta(Q):=
  \begin{cases}
    1         &\text{if $Q\in\cK_k^d(\Lm_{n+\lceil2w+4\rceil})$,}\\
    \eta(Q)    &\text{otherwise.}
  \end{cases}
  \]
  Then
  \begin{align*}
  \P_p(M=0)
  &\ge\P_p\bigl(\widetilde\eta\in\{M=0\},\,\eta(Q)=1\text{ for all }Q\in\cK_k^d(\Lm_{n+\lceil2w+4\rceil})\bigr)\\
  &=\P_p(\widetilde\eta\in\{M=0\})\cdot p^{|\cK_k^d(\Lm_{n+\lceil2w+4\rceil})|}\\
  &\ge\P_p(E_n)\cdot p^{|\cK_k^d(\Lm_{n+\lceil2w+4\rceil})|}
  >0,
  \end{align*}
  which contradicts $\P_p(M=m)=1$.

  \medskip
  It remains to rule out the possibility that $m=\infty$.
  Assume that $m=\infty$.
  In what follows, let $G_w^\text{und}=G_w^\text{und}(\eta)$ denote the undirected graph obtained from the associated directed graph $G_w=G_w(\eta)$ by forgetting the orientation of all directed edges (the vertex set remains $V_w$).
  For a $(k+1)$-plaquette $R$, we denote by $Q_1,Q_2,\ldots,Q_{2(k+1)}$ the $k$-plaquettes that form the faces of $R$.
  Note that the boundary of each $Q_i$ is a discrete $(k-1)$-sphere, which we denote by $\del Q_i$.
  Let $\cC(R)\subset V_w$ denote the set of all vertices that are connected in $G_w^\text{und}$ to a vertex whose base sphere is $\del Q_i$ for some $i=1,2,\ldots,2(k+1)$.
  We call a $(k+1)$-plaquette $R$ a \emph{trifurcation} if the following conditions hold~(see Figure~\ref{fig:trifurcation} for an example):
  \begin{itemize}
  \item[(i)] The $k$-plaquettes $Q_1,Q_2,\ldots,Q_{2(k+1)}$ are all open, and $\cC(R)$ contains exactly one infinite connected component of the undirected graph $G_w^\text{und}$;
  \item[(ii)] Let $\eta_R$ be the configuration obtained from $\eta$ by declaring $Q_1,Q_2,\ldots,Q_{2(k+1)}$ closed.
  Then, after identifying all vertices whose base spheres are equal to $\del Q_i$ for each $i=1,2,\ldots,2(k+1)$ in the undirected graph $G_w^\text{und}(\eta_R)$, the infinite connected component of $\cC(R)$ splits into exactly three distinct infinite connected components, along with finitely many additional components, all of whose vertices have base spheres contained in the $\lceil2w+4\rceil$-thickening of $R$.
  \end{itemize}
  
  We now fix a specific $(k+1)$-plaquette $R_o:=[0,1]^{k+1}\times\{0\}^{d-k-1}$, and prove that
  \begin{equation}\label{eq:pos_density}
  \P_p(\text{$R_o$ is a trifurcation})>0.
  \end{equation}
  Since $\P_p(M=\infty)=1$, we can choose $n\in\N$ sufficiently large so that
  \begin{equation}\label{eq:3_out-rays}
  \P_p\left(
  \begin{array}{l}
    \text{$\exists$ out-rays $\pi_1,\pi_2,\pi_3$ whose corresponding out-ends are pairwise}\\
    \text{incomparable in $(\Ends^+(G_w),\le)$, and such that the base sphere}\\
    \text{of the initial vertex of each $\pi_h$~($h=1,2,3$) intersects $\Lm_n$}
  \end{array}
  \right)
  >0.
  \end{equation}
  Since $[\pi_1],[\pi_2],[\pi_3]$ in the above event are pairwise incomparable in $(\Ends^+(G_w),\le)$, there exist only finitely many vertex-disjoint paths between $\pi_h$ and $\pi_{h'}$~($h\neq h'$).
  Hence, by enlarging $n$ if necessary, we may assume that the base spheres of the vertices appearing in these finitely many vertex-disjoint paths (for every pair among $\pi_1,\pi_2,\pi_3$) intersect $\Lm_n$.
  By truncating each $\pi_h$ at the last vertex whose base sphere intersects $\Lm_n$, we may further assume that the following conditions hold for the out-rays $\pi_h=(\Dl_{h,0},\Dl_{h,1},\Dl_{h,2},\ldots)$~($h=1,2,3$) in the event~\eqref{eq:3_out-rays}:
  \begin{itemize}
    \item[(C1)] For each $h=1,2,3$, only the initial vertex has a base sphere intersecting $\Lm_n$; that is,
    \[
    S_{\Dl_{h,0}}\cap\Lm_n\neq\emptyset
    \quad\text{and}\quad
    S_{\Dl_{h,i}}\cap\Lm_n=\emptyset\text{ for $i\ge1$.}
    \]
    \item[(C2)] For each $h=1,2,3$, write $\pi_h\setminus\Dl_{h,0}:=(\Dl_{h,1},\Dl_{h,2},\ldots)$.
    Then the out-rays $\pi_1\setminus\Dl_{1,0}$, $\pi_2\setminus\Dl_{2,0}$, and $\pi_3\setminus\Dl_{3,0}$ lie in three distinct connected components of the subgraph of $G_w^\text{und}$ induced by $\{\Dl\in V_w\mid S_\Dl\cap\Lm_n=\emptyset\}$.
  \end{itemize}
  We then define the event
  \[
  F_n:=\left\{
  \begin{array}{l}
    \text{there exist out-rays $\pi_h=(\Dl_{h,0},\Dl_{h,1},\Dl_{h,2},\ldots)$~($h=1,2,3$) in $G_w$}\\
    \text{such that Conditions~(C1) and~(C2) are satisfied}
  \end{array}
  \right\}.
  \]
  By~\eqref{eq:3_out-rays} and the preceding discussion, we obtain $\P_p(F_n)>0$.
  We now deduce~\eqref{eq:pos_density} by a local modification argument.
  For each $\eta\in F_n$, we construct a modified configuration $T(\eta)$ that coincides with $\eta$ outside $\Lm_{n+\lceil2w+4\rceil}$ and for which $R_o$ is a trifurcation~(see Figure~\ref{fig:trifurcation}).\footnote{Whenever necessary, we may choose the initial vertices of $\pi_h$~($h=1,2,3$) so that the pairwise $\ell^\infty$-distances between their base spheres are sufficiently large.
  This can be achieved by a minor adjustment of the preceding argument, since $\P_p(M=\infty)=1$.}
  Outside $F_n$, define $T$ arbitrarily, requiring only that $T(\eta)$ coincides with $\eta$ outside $\Lm_{n+\lceil2w+4\rceil}$.
  Then $F_n\subset T^{-1}(A_o)$, where $A_o:=\{\text{$R_o$ is a trifurcation}\}$, and hence $\P_p(T^{-1}(A_o))\ge\P_p(F_n)>0$.
  Since $\P_p$ is a product measure with $p\in(0,1)$ and $T$ modifies only finitely many $k$-plaquettes within $\Lm_{n+\lceil2w+4\rceil}$, such modifications preserve positivity; more precisely,
  \[
  \P_p(A_o)
  \ge\min\{p,1-p\}^{|\cK_k^d(\Lm_{n+\lceil2w+4\rceil})|}\,\P_p(T^{-1}(A_o))>0.
  \]

  We now complete the proof by deriving a contradiction.
  Let $T_n$ denote the number of trifurcation $(k+1)$-plaquettes contained in $\Lm_n$.
  By translation invariance, we have
  \[
  \E_p[T_n]
  =\sum_{R\in\cK_{k+1}^d(\Lm_n)}\P_p(\text{$R$ is a trifurcation})
  =|\cK_{k+1}^d(\Lm_n)|\cdot\P_p(\text{$R_o$ is a trifurcation}).
  \]
  Since $\P_p(\text{$R_o$ is a trifurcation})>0$ by~\eqref{eq:pos_density}, the right-hand side grows on the order of $n^d$ as $n\to\infty$.
  On the other hand, we shall use the following deterministic estimate, proved immediately after the present proof:
  $T_n$ grows at most on the order of $n^{d-1}$ as $n\to\infty$.
  This contradicts the above estimate on $\E_p[T_n]$ and thereby completes the proof.
\end{proof}

\begin{figure}[H]
  \centering
  \includegraphics[width=14cm]{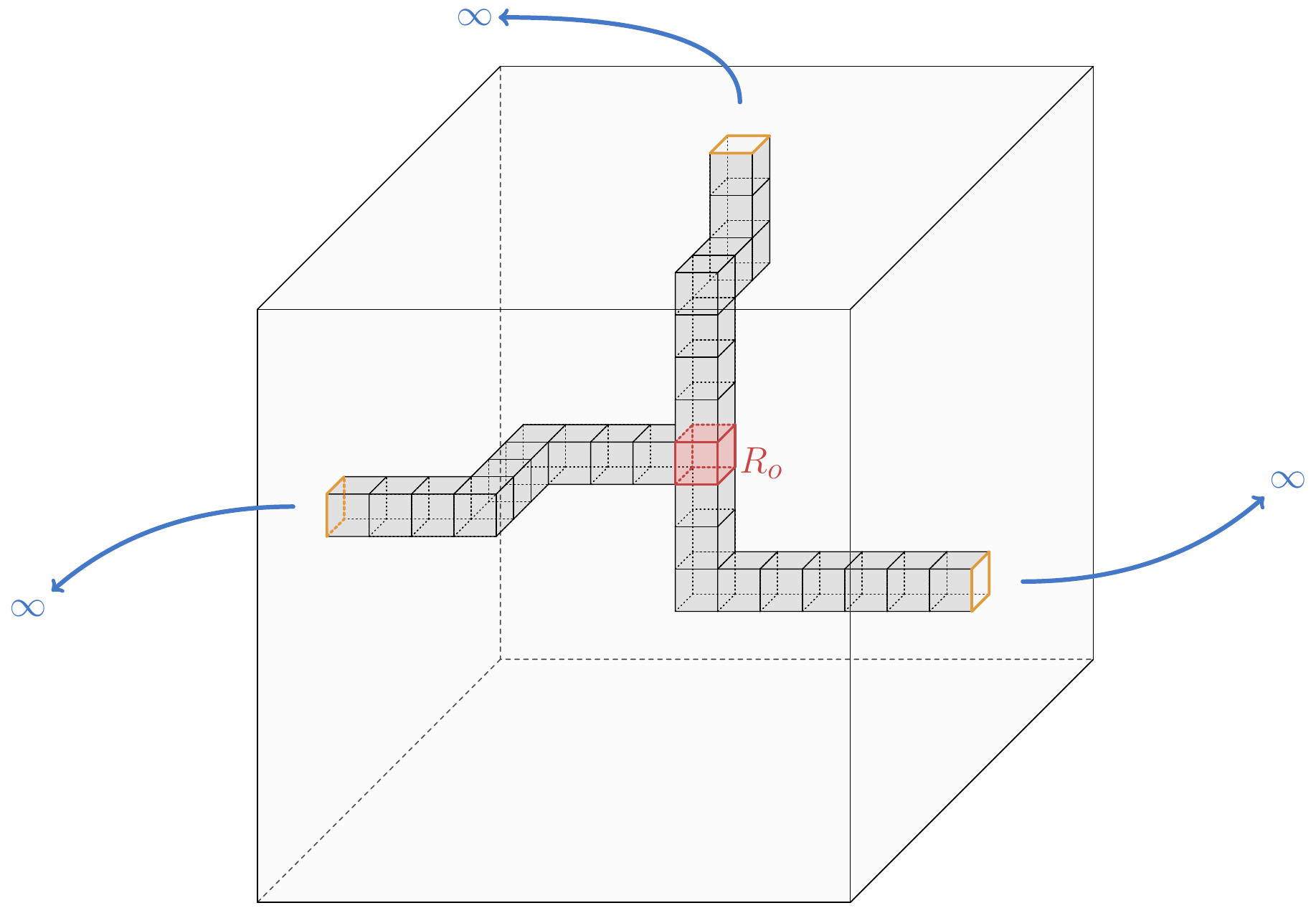}
  \caption{Illustration of the trifurcation $(k+1)$-plaquette $R_o$ (shown in red)  for $k=2$ and $d=3$, obtained after modifying the $k$-plaquette configuration inside $\Lm_{n+\lceil2w+4\rceil}$.
  The blue arrows indicate the out-rays $\pi_h\setminus\Dl_{h,0}$~($h=1,2,3$), as constructed in the proof of Lemma~\ref{lem:uniqueness}.
  Vertices of the undirected graph $G_w^\text{und}$ whose base loops are shown in orange reach infinity along these out-rays.}
  \label{fig:trifurcation}
\end{figure}

We conclude this section by proving the following deterministic estimate, used in the proof of Lemma~\ref{lem:uniqueness}.
\begin{lem}
  Let $T_n$ denote the number of trifurcation $(k+1)$-plaquettes contained in $\Lm_n$.
  Then there exists a constant $C>0$ such that $T_n\le Cn^{d-1}$ for all $n\in\N$.
\end{lem}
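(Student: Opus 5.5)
We will follow the classical Burton--Keane counting argument: build a forest out of the trifurcations in $\Lm_n$ and compare its number of leaves with the boundary of a slightly enlarged box. Set $r:=\lceil2w+4\rceil$ and $\Lm_n':=\Lm_{n+r}$.

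\textbf{Step 1: local, disjoint branches.} For each trifurcation $R\subset\Lm_n$, condition (ii) says that closing the faces $Q_1,\ldots,Q_{2(k+1)}$ of $R$ splits the unique infinite component of $\cC(R)$ into exactly three infinite components, and all other pieces are finite and contained in the $r$-thickening of $R$. We call these three components the \emph{branches} $B_1(R),B_2(R),B_3(R)$. Each branch is infinite. Each edge of $G_w^\text{und}$ corresponds to a tube of diameter at most $2w+4$ (the locality condition in Definition~\ref{df:associated}). Hence each branch contains a vertex whose base sphere meets $\del\Lm_n'$ but lies outside $\Lm_n$. Following the branch outward from $R$, we record the set $Y_j(R)$ of $k$-plaquettes or unit boxes near $\del\Lm_n'$ that the branch first reaches.

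\textbf{Step 2: the combinatorial lemma.} We need the analogue of the Burton--Keane lemma: if $K$ sets each admit a partition into three nonempty blocks, and the partitions are \emph{compatible}, then the ground set has at least $K+2$ elements. Compatible means that for $R\neq R'$, some block of one partition contains two blocks of the other, i.e.\ the partitions form a laminar/tree-like family. Here the ground set is the set of unit cubes (or $k$-plaquettes) in $\Lm_n'\setminus\Lm_n$ touching $\del\Lm_n'$ that are reached by infinite parts of $G_w^\text{und}$. Each trifurcation $R$ induces a partition of the boundary points reachable from $\cC(R)$, via which branch reaches them.

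The main obstacle is verifying compatibility. For distinct trifurcations $R,R'$ in the same infinite component, $R'$ must lie in (or be attached to) a single branch of $R$. Then the two branches of $R'$ that avoid $R$ must both lie in that same branch of $R$. In classical percolation this follows from uniqueness of paths after removing the edges at $R$. In our setting, closing the faces of $R$ and identifying vertices with base sphere $\del Q_i$ is exactly what (ii) controls. The danger is that $R$ and $R'$ can be close, within distance $r$, and their local finite pieces may overlap. To handle this, we first pass to a sublattice of trifurcations at mutual distance greater than $2r+2$. This costs only a constant factor $c(w,d)$ in counting, so it does not affect a bound of the form $Cn^{d-1}$. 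For such well-separated $R,R'$, the modification at $R$ does not touch the local structure at $R'$. Any undirected path in $G_w^\text{und}(\eta_R)$ between two branches of $R'$ that avoids $R'$ would also exist in $G_w^\text{und}(\eta_{R'})$ or pass through $R$. The first case contradicts (ii) at $R'$; in the second case the branches containing it are merged in the appropriate way. This yields laminarity.

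\textbf{Step 3: counting.} The combinatorial lemma gives that the number of well-separated trifurcations in $\Lm_n$ is at most the number of boundary elements of $\Lm_n'$, which is at most $C'(n+r)^{d-1}$. Multiplying by the separation constant gives $T_n\le Cn^{d-1}$. This bound is deterministic, with $C$ depending only on $w,k,d$.
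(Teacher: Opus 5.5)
Your strategy is the paper's: a Burton--Keane count through a compatible family of $3$-partitions (Grimmett's Lemma~8.5) of a ground set of size $O(n^{d-1})$ near $\partial\Lambda_n$, summed over components. However, Step~2 does not go through with the ground set you chose.

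\textbf{Why the cell-based ground set fails.} Vertices of $G_w$ are collar classes. Two different branches of $R$ can contain vertices whose base spheres meet the same unit cube or plaquette. This can happen even with the same discrete sphere carrying two different collar classes. So ``which branch reaches it'' is not a function, and $Y_1(R),Y_2(R),Y_3(R)$ need not be disjoint. The compatible-partition lemma then does not apply. For the same reason, the ground sets of different components overlap, so the sum over components is not bounded by the number of cells.

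\textbf{Why the layer touching $\partial\Lambda'_n$ is too thin.} An edge of $G_w$ joins spheres up to distance $2w+4$ apart. A branch can therefore leave $\Lambda'_n$ without any of its vertices having a base sphere that meets $\partial\Lambda'_n$. Your claim in Step~1 is false as stated, and a block may be empty. In addition, for $R$ touching $\partial\Lambda_n$, the finite pieces in the $r$-thickening of $R$ can reach your layer, leaving elements that lie in no block.

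\textbf{The fix.} Do what the paper does. For each connected component $\mathcal{C}$ of the subgraph induced by vertices with base spheres in $\Lambda_{n+r}$, take as ground set the vertices $\Delta\in\mathcal{C}$ (not cells) whose base spheres lie in an annulus around $\partial\Lambda_n$ of thickness of order $w$; this is the paper's $\Sigma_{\mathcal{C}}$. Count only trifurcations in $\Lambda_{n-r}$. Then:
\begin{itemize}
\item each ground-set vertex lies in exactly one branch;
\item every branch must place a vertex in that thick annulus;
\item the small pieces near $R$ stay away from it;
\item the sets $\Sigma_{\mathcal{C}}$ are disjoint;
\item their total size is $O(n^{d-1})$, because each discrete sphere carries only finitely many collar classes.
\end{itemize}
Trifurcations in $\Lambda_n\setminus\Lambda_{n-r}$ add only $O(n^{d-1})$.

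\textbf{On your separation step.} Passing to residue classes of pairwise well-separated trifurcations is a sensible addition that the paper does not make. The paper simply asserts compatibility for all pairs, including adjacent $R,R'$ sharing a face, where closing the faces of $R$ already alters $R'$. Your reduction costs only a constant factor. Once the separation exceeds $2w+4$, it also makes the edge sets destroyed at $R$ and at $R'$ disjoint.

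Your compatibility argument does still take for granted that all vertices attached to the faces of $R'$ fall into a single branch of $R$. This is not automatic, since the three-way split in condition~(ii) holds only after identifying collar classes at $R'$. The paper asserts compatibility without argument as well, so on this point you are not behind it.
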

\begin{proof}
  Fix an integer $n>\lceil2w+4\rceil$.
  Let $\cC$ be a connected component of the subgraph of the undirected graph $G_w^\text{und}$ induced by $\{\Dl\in V_w\mid S_\Dl\subset\Lm_{n+\lceil2w+4\rceil}\}$.
  Suppose that a $(k+1)$-plaquette $R\subset\Lm_{n-\lceil2w+4\rceil}$ is a trifurcation with $\cC(R)\cap\cC\neq\emptyset$.
  Then, declaring its $2(k+1)$ faces to be closed, together with the vertex identification in the definition of trifurcation, induces a $3$-partition
  \[
  \Pi(R)=\{\Sg_1(R),\Sg_2(R),\Sg_3(R)\}
  \]
  of a set $\Sg_\cC:=\cC\cap\{\Dl\in V_w\mid S_\Dl\subset\Lm_{n+\lceil2w+4\rceil}\setminus\Lm_n\}$, by the definition of trifurcation.
  Indeed, each of the three infinite connected components appearing in the definition intersects $\Sg_\cC$, whereas the remaining ``small'' components do not meet $\Sg_\cC$, since $R\subset\Lm_{n-\lceil2w+4\rceil}$.
  Moreover, the family
  \[
  \{\Pi(R)\colon\text{$R\in\cK_{k+1}^d(\Lm_{n-\lceil2w+4\rceil})$ is a trifurcation with $\cC(R)\cap\cC\neq\emptyset$}\}
  \]
  of $3$-partitions of the set $\Sg_\cC$ is \emph{compatible}, in the sense that for any two distinct $3$-partitions
  \[
  \Pi(R)=\{\Sg_1(R),\Sg_2(R),\Sg_3(R)\}
  \quad\text{and}\quad
  \Pi(R')=\{\Sg_1(R'),\Sg_2(R'),\Sg_3(R')\},
  \]
  there exists a relabeling of the elements such that $\Sg_1(R)\supset\Sg_2(R')\,\cup\,\Sg_3(R')$.
  It therefore follows from the classical bound on the cardinality of a compatible family of $3$-partitions~(see, e.g.,~\cite[Lemma~8.5]{Gr1999}) that
  \[
  |\{R\in\cK_{k+1}^d(\Lm_{n-\lceil2w+4\rceil})\mid\text{$R$ is a trifurcation with $\cC(R)\cap\cC\neq\emptyset$}\}|\le|\Sg_\cC|.
  \]
  Summing this inequality over all connected components $\cC$ of the subgraphs of $G_w^\text{und}$ induced by $\{\Dl\in V_w\mid S_\Dl\subset\Lm_{n+\lceil2w+4\rceil}\}$ yields
  \[
  |\{R\in\cK_{k+1}^d(\Lm_{n-\lceil2w+4\rceil})\mid\text{$R$ is a trifurcation}\}|\le|\{\Dl\in V_w\mid S_\Dl\subset\Lm_{n+\lceil2w+4\rceil}\setminus\Lm_n\}|.
  \]
  Finally, recall that there are only finitely many equivalence classes of $k$-collars based at a given discrete $(k-1)$-sphere, independently of $n$.
  It follows that the right-hand side is bounded above by a constant (independent of $n$) times $n^{d-1}$ for all $n\in\N$.
\end{proof}

\subsection{Proof of Theorem~\ref{thm:uniqueness}}
Here we prove Theorem~\ref{thm:uniqueness} using Lemma~\ref{lem:uniqueness}.
\begin{proof}[Proof of Theorem~\ref{thm:uniqueness}]
  Let
  \[
  \gm^-\colon\S^{k-1}\times(-\infty,0]\to\R^d
  \quad\text{and}\quad
  \gm^+\colon\S^{k-1}\times[0,\infty)\to\R^d
  \]
  be backward and forward infinite open $k$-tubes of width at most $w$, respectively.
  By the same argument as in the proof of Lemma~\ref{lem:reach-join}~(from join to reachability), we can discretize these tubes.
  More specifically, we construct sequences
  \[
  0<t_1^-<t_2^-<\cdots
  \quad\text{and}\quad
  0<t_1^+<t_2^+<\cdots,
  \]
  together with homeomorphisms (parameter perturbations) $\Phi^-$ and $\Phi^+$ defined on $\S^{k-1}\times(-\infty,0]$ and $\S^{k-1}\times[0,\infty)$, respectively, such that:
  \begin{itemize}
    \item for every $p\in\S^{k-1}$, the map $\Phi^-(p,\cdot)$ is nondecreasing with $\Phi^-(p,0)=(p,0)$, and $\Phi^+(p,\cdot)$ is nonincreasing with $\Phi^+(p,0)=(p,0)$;
    \item for each $i\in\N$, the perturbed $(k-1)$-sphere
    \[
    \widetilde S_i^-:=\gm^-(\Phi^-(\S^{k-1}\times\{t_i^-\}))
    \quad\text{and}\quad
    \widetilde S_i^+:=\gm^+(\Phi^+(\S^{k-1}\times\{t_i^+\}))
    \]
    are discrete $(k-1)$-spheres approximating the original topological $(k-1)$-spheres
    \[
    S_i^-:=\gm^-(\S^{k-1}\times\{t_i^-\})
    \quad\text{and}\quad
    S_i^+:=\gm^+(\S^{k-1}\times\{t_i^+\}),
    \]
    respectively, in the sense that $d_H(S_i^-,\widetilde S_i^-)\le1$ and $d_H(S_i^+,\widetilde S_i^+)\le1$, where $d_H$ denotes the Hausdorff distance;
    \item there exist backward and forward infinite directed paths
    \[
    \pi^-=(\ldots,\Dl_2^-,\Dl_1^-)\in V_{2w+4}^{\Z_{\le0}}
    \quad\text{and}\quad
    \pi^+=(\Dl_1^+,\Dl_2^+,\ldots)\in V_{2w+4}^{\Z_{\ge0}},
    \]
    respectively, in $G_{2w+4}$ such that $S_{\Dl_i^-}=\widetilde S_i^-$ and $S_{\Dl_i^+}=\widetilde S_i^+$ for all $i\in\N$.
  \end{itemize}
  On the almost sure event in Lemma~\ref{lem:uniqueness}, applied with $2w+4$ in place of $w$, we obtain a bi-infinite directed path
  \[
  \pi=(\ldots,\Dl_{-1},\Dl_0,\Dl_1,\ldots)\in V_{2w+4}^\Z
  \]
  such that
  \[
  \pi|_{\Z\cap(-\infty,-I]}=\pi^-|_{\Z\cap(-\infty,-I]}
  \quad\text{and}\quad
  \pi|_{\Z\cap[I,\infty)}=\pi^+|_{\Z\cap[I,\infty)}
  \]
  for some $I\in\N$.
  If necessary, we first modify $\pi^-$ and $\pi^+$ to be self-avoiding, noting that each vertex appears only finitely many times in these paths.
  Using the same concatenation argument as in Lemma~\ref{lem:reach-join}~(from reachability to join), we can construct a bi-infinite open $k$-tube $\widetilde\gm\colon\S^{k-1}\times\R\to\R^d$ of width at most $2w+4$ such that the tails of the perturbed tubes $\gm^-\circ\Phi^-$ and $\gm^+\circ\Phi^+$ are combined by $\widetilde\gm$:
  \[
  \widetilde\gm|_{\S^{k-1}\times(-\infty,-t_I^-]}=(\gm^-\circ\Phi^-)|_{\S^{k-1}\times(-\infty,-t_I^-]}
  \quad\text{and}\quad
  \widetilde\gm|_{\S^{k-1}\times[t_I^+,\infty)}=(\gm^+\circ\Phi^+)|_{\S^{k-1}\times[t_I^+,\infty)}.
  \]
  Finally, we slightly adjust $\widetilde\gm$ using the homeomorphisms $\Phi^-$ and $\Phi^+$ so that the perturbed spheres $\widetilde S_I^-$ and $\widetilde S_I^+$ coincide with the original topological $(k-1)$-spheres $S_I^-$ and $S_I^+$, respectively, at the cost of an additional width depending only on $w$.
  This yields the desired bi-infinite open $k$-tube.
\end{proof}

\section{Sharp Threshold for the Tubular Box-Crossing Property}
\label{sec:box-crossing}
In this section, we prove Theorem~\ref{thm:box-crossing}.
Recall that the \emph{tubular box-crossing event} $\cH_{w,n}$ is defined by
\[
\cH_{w,n}:=\left\{
  \begin{array}{l}
    \text{$\exists$ an open $k$-tube in $\Lm_n$ of width at most $w$ whose initial sphere}\\
    \text{intersects $L_n$ and terminal sphere intersects $R_n$}
  \end{array}
  \right\},
\]
where $L_n:=\{-n\}\times[-n,n]^{d-1}$ and $R_n:=\{n\}\times[-n,n]^{d-1}$ denote the left and right faces of the box $\Lm_n$, respectively.
The first statement of Theorem~\ref{thm:box-crossing} is an immediate consequence from the exponential decay of the tubular one-arm probability in $n$ when $p<p_c^{\tube}$~(Theorem~\ref{thm:sharp_1-arm}(1)).
\begin{proof}[Proof of Theorem~\ref{thm:box-crossing}(1)]
  Suppose that $p<p_c^{\tube}$, and let $w>0$.
  Since
  \[
  \cH_{w,n}\subset\bigcup_{x\in L_n\cap\Z^d}\left\{
  \begin{array}{l}
    \text{$\exists$ an open $k$-tube of width at most $w$ whose initial sphere}\\
    \text{intersects $x+\Lm_o$ and terminal sphere intersects $\del(x+\Lm_{2n})$}
  \end{array}
  \right\},
  \]
  the union bound, together with translation invariance, implies
  \begin{equation}\label{eq:box-crossing1}
  \P_p(\cH_{w,n})
  \le|L_n\cap\Z^d|\cdot\theta_{w,2n}^{\tube}(p).
  \end{equation}
  By Theorem~\ref{thm:sharp_1-arm}(1), there exists a constant $c_{p,w}>0$ such that
  \[
  \theta_{w,2n}^{\tube}(p)\le\exp(-c_{p,w}n)
  \]
  for all $n\in\N$ with $2n>1/2+w$.
  Since $|L_n\cap\Z^d|$ grows only polynomially in $n$, it follows that the right-hand side of~\eqref{eq:box-crossing1} tends to zero as $n\to\infty$.
  This completes the proof.
\end{proof}

To prove the second statement of Theorem~\ref{thm:box-crossing}, we exploit the analogue of the uniqueness of the infinite open cluster established in the previous section~(Lemma~\ref{lem:uniqueness}).
\begin{proof}[Proof of Theorem~\ref{thm:box-crossing}(2)]
  Suppose that $p>p_c^{\tube}$, and fix an arbitrary constant $h\in(0,1]$.
  Since there exists an infinite open $k$-tube almost surely by~\eqref{eq:tube_perc}, we can choose $m\in\N$ such that
  \[
  \P_p(I_m):=\P_p(\text{$\exists$ an infinite open $k$-tube whose initial sphere intersects $\Lm_m$})
  >1-h.
  \]
  For $w>0$ and an integer $n>m+w$, let $F_{n,1},F_{n,2},\ldots,F_{n,2d}$ denote the $2d$ faces of the box $\Lm_n$.
  Set $F_{n,1}:=L_n$ and $F_{n,2}:=R_n$.
  For each $i=1,2,\ldots,2d$, we define the event
  \[
  \cF_{w,n,i}:=\left\{
  \begin{array}{l}
    \text{$\exists$ an open $k$-tube in $\Lm_n$ of width at most $w$ whose initial}\\
    \text{sphere intersects $\Lm_m$ and terminal sphere intersects $F_{n,i}$}
  \end{array}
  \right\}.
  \]
  Since
  \[
  1-h<\P_p(I_m)
  \le\P_p\Biggl(\bigcup_{w\in\N}\,\,\bigcap_{n>m+w}\,\,\bigcup_{i=1}^{2d}\cF_{w,n,i}\Biggr)
  =\lim_{w\to\infty}\P_p\Biggl(\bigcap_{n>m+w}\,\,\bigcup_{i=1}^{2d}\cF_{w,n,i}\Biggr),
  \]
  we can take large $w>0$ such that for all integers $n>m+w$,
  \[
  \P_p\Biggl(\bigcup_{i=1}^{2d}\cF_{w,n,i}\Biggr)\ge1-h.
  \]
  Therefore, for every integer $n>m+w$,
  \[
  h\ge\P_p\Biggl(\bigcap_{i=1}^{2d}\cF_{w,n,i}^{\,c}\Biggr)
  \ge\prod_{i=1}^{2d}\P_p(\cF_{w,n,i}^c)
  =\{1-\P_p(\cF_{w,n,1})\}^{2d}.
  \]
  Here, the second inequality follows from the FKG inequality, since each event $\cF_{w,n,i}$ is increasing events.
  For the last equality uses the rotational symmetry of the faces $F_{n,i}$.
  Thus,
  \[
  \P_p(\cF_{w,n,1})=\P_p(\cF_{w,n,2})\ge1-h^{1/(2d)}.
  \]
  By the FKG inequality again, we obtain
  \begin{equation}\label{eq:box-crossing2}
  \P_p(\cF_{w,n,1}\cap\cF_{w,n,2})
  \ge\P_p(\cF_{w,n,1})\P_p(\cF_{w,n,2})
  \ge\bigl(1-h^{1/(2d)}\bigr)^2
  \end{equation}
  for all integers $n>m+w$.
  
  We estimate the probability that two open $k$-tubes witnessing the event $\cF_{w,n,1}\cap\cF_{w,n,2}$ cannot be combined to form a single open $k$-tube from $F_{n,1}=L_n$ to $F_{n,2}=R_n$~(see Figure~\ref{fig:crossing}).
  Suppose that the event $\cF_{w,n,1}\cap\cF_{w,n,2}\setminus\cH_{4w+13,n}$ occurs for infinitely many integers $n>m+w$.
  Then, as in the proof of~\eqref{eq:approx_Tw_2nd}, we can construct in-ray $\pi^-$ and out-ray $\pi^+$ in $G_{4w+13}$ such that there exists no double ray $\pi$ in $G_{4w+13}$ satisfying~\eqref{eq:tail_ray} for some $I\in\N_0$.
  This event has probability zero by Lemma~\ref{lem:uniqueness}.
  Therefore,
  \[
  \limsup_{n\to\infty}\P_p(\cF_{w,n,1}\cap\cF_{w,n,2}\setminus\cH_{4w+13,n})
  \le\P_p\Bigl(\limsup_{n\to\infty}(\cF_{w,n,1}\cap\cF_{w,n,2}\setminus\cH_{4w+13,n})\Bigr)
  =0
  \]
  Combining this with~\eqref{eq:box-crossing2}, we obtain
  \begin{align*}
  \P_p(\cH_{4w+13,n})
  &\ge\P_p(\cH_{4w+13,n}\cap(\cF_{w,n,1}\cap\cF_{w,n,2}))\\
  &=\P_p(\cF_{w,n,1}\cap\cF_{w,n,2})-\P_p(\cF_{w,n,1}\cap\cF_{w,n,2}\setminus\cH_{4w+13,n})\\
  &\ge\bigl(1-h^{1/(2d)}\bigr)^2-\P_p(\cF_{w,n,1}\cap\cF_{w,n,2}\setminus\cH_{4w+13,n})\\
  &\xrightarrow[n\to\infty]{}\bigl(1-h^{1/(2d)}\bigr)^2.
  \end{align*}
  Consequently,
  \[
  \liminf_{n\to\infty}\P_p(\cH_{4w+13,n})
  \ge\bigl(1-h^{1/(2d)}\bigr)^2.
  \]
  Since $h>0$ is arbitrary, the conclusion follows.
\end{proof}

\begin{figure}[H]
  \centering
  \includegraphics[width=9.5cm]{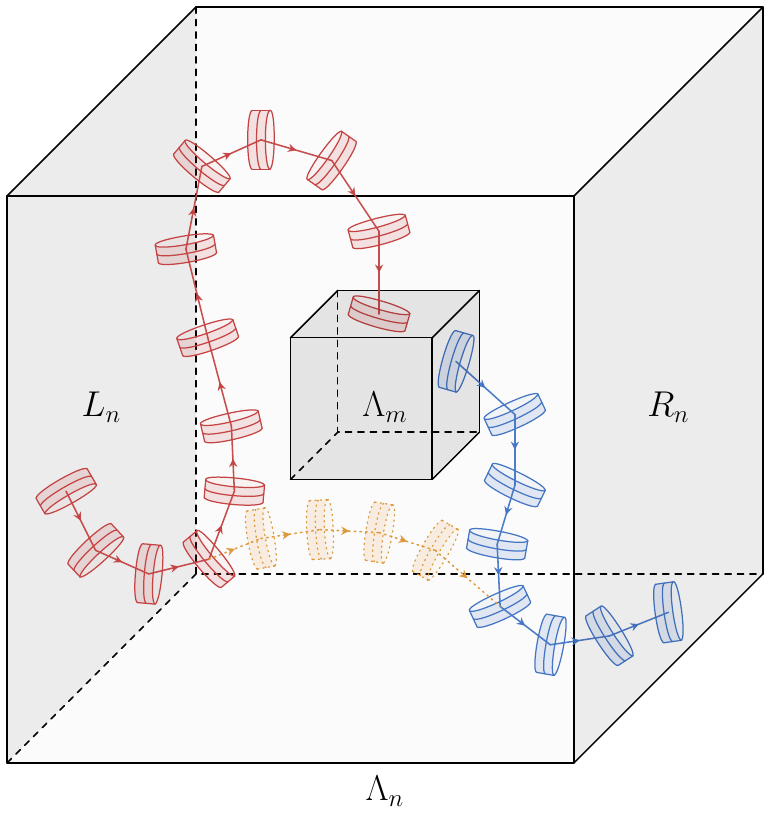}
  \caption{Illustration of two directed paths in the associated directed graph $G_{4w+13}$ for $k=2$ and $d=3$, arising on the event $\cF_{w,n,1}\cap\cF_{w,n,2}\setminus\cH_{4w+13,n}$, as used in the proof of Theorem~\ref{thm:box-crossing}(2).
  Each red, blue, or orange ``ring'' represents a vertex of $G_{4w+13}$.
  The base loops of the initial and terminal vertices of the left directed path (shown in red) intersect the left side $L_n$ of the large box $\Lm_n$ and the smaller box $\Lm_m$, respectively.
  Similarly, the base loops of the initial and terminal vertices of the right directed path (shown in blue) intersect $\Lm_m$ and the right side $R_n$ of $\Lm_n$, respectively.
  Moreover, there is no directed path connecting these two paths from any vertex of the left (red) path to any vertex of the right (blue) path, as illustrated by the middle directed path (shown in orange).}
  \label{fig:crossing}
\end{figure}

\section{Discussion}
\label{sec:discussion}
In this paper, we introduced the notion of \emph{tube percolation}, describing the escape of a loop (or, more generally, a topological sphere) to infinity in the plaquette percolation model while undergoing only bounded stretching.
We established sharp thresholds for the tubular one-arm event and the tubular box-crossing property.
Furthermore, we related the sharp phase transition for tube percolation to several other percolation thresholds.
At criticality, the behavior of tube percolation remains largely unexplored and appears to be a promising direction for further study.
We conclude by discussing several open problems and directions for future research in higher-dimensional percolation, with a particular focus on homological formulations.
\begin{enumerate}
  \item We have studied the existence of a box-crossing tube connecting two opposite faces of a large box $\Lm_n=[-n,n]^d$ in the $k$-plaquette percolation model on $\R^d$~($1\le k\le d-1$).
  A further natural object to study is a \emph{box-crossing $k$-dimensional sheet}.
  One possible way to formulate this notion is via cubical homology~(see, e.g.,~\cite[Chapter~2]{KMM2004}).
  We call a $k$-cycle in a cubical set $X$ \emph{nontrivial} if it is not the boundary of any $(k+1)$-chain in $X$.
  We say that a $k$-chain $\Gm$ in the box $\Lm_n$ (horizontally) \emph{crosses} $\Lm_n$ if its boundary $\del\Gm$ forms a nontrivial $(k-1)$-cycle in the ``horizontal boundary''
  \[
  \del_h\Lm_n:=(\del[-n,n]^k)\times[-n,n]^{d-k}
  \]
  of $\Lm_n$~(in this case, $\Gm$ represents a nontrivial generator of the relative homology group $H_k(\Lm_n,\del_h\Lm_n)$).
  An open problem is whether the existence of a box-crossing open $k$-chain~(a chain consisting of open $k$-plaquettes) exhibits a sharp threshold; that is,
  \begin{equation}\label{eq:sharp_sheet}
  \lim_{n\to\infty}\P_p(\text{$\exists$ an open $k$-chain in $\Lm_n$ that crosses $\Lm_n$})=
  \begin{cases}
  0     &\text{if $p<p_c(k,d)$,}\\
  1     &\text{if $p>p_c(k,d)$}
  \end{cases}
  \end{equation}
  for some $p_c(k,d)\in(0,1)$.
  \item The existence of a box-crossing open $1$-chain in the $1$-plaquette percolation model on $\R^d$ coincides with the usual box-crossing property in the bond percolation model.
  Hence,~\eqref{eq:sharp_sheet} holds for $k=1$ and $p_c(1,d)=p_c^{\bond}(d)$.
  Furthermore, in the $(d-1)$-plaquette percolation model on $\R^d$, the existence of a (horizontal) box-crossing open $(d-1)$-chain prevents the existence of a (vertical) box-crossing open path in the dual bond percolation model, and vice versa.
  Therefore, one may show that~\eqref{eq:sharp_sheet} also holds for $k=d-1$, and that $p_c(d-1,d)=1-p_c(1,d)$.
  More generally, since each $k$-plaquette intersects a unique $(d-k)$-plaquette on the dual-vertex set $(\Z^d)^*$ at a single point, namely, their common center, the $k$-plaquette percolation model on $\R^d$ admits a dual $(d-k)$-plaquette percolation model on $\R^d$.
  Moreover, the presence of a (horizontal) box-crossing open $k$-chain implies the absence of a (vertical) box-crossing open $(d-k)$-chain in the dual model.
  This observation suggests that, if~\eqref{eq:sharp_sheet} holds, then the duality relation
  \begin{equation}\label{eq:duality}
  p_c(k,d)+p_c(d-k,d)=1
  \end{equation}
  should hold for all $1\le k\le d-1$.
  In addition to the finite-volume characterization of the threshold $p_c(k,d)$ in~\eqref{eq:sharp_sheet}, another challenging direction is to identify an infinite $k$-dimensional object whose emergence occurs precisely at $p_c(k,d)$.
  As seen in~\eqref{eq:cycle-strict} and~\eqref{eq:tube-strict}, the duality relation~\eqref{eq:duality} cannot hold for face, core, cycle, or tube percolation, at least in sufficiently high dimension $d$.
  What would be an appropriate notion of such an infinite $k$-dimensional object, and how might it be analyzed?
  \item The final open problem is a homological generalization of the uniqueness theorem for the infinite open cluster in the bond percolation model on the hypercubic lattice $\L^d$.
  To this end, we consider infinite open $k$-chains, which are formal linear combinations of possibly infinitely many open $k$-plaquettes.
  This is, in fact, a $k$-chain in the sense of Borel--Moore homology.
  Since the hypercubic lattice is locally finite, we can likewise define the boundary operator for infinite chains and consider the notion of (non)triviality in the Borel--Moore sense.
  To distinguish this from the usual (non)triviality, we use the terminology \emph{BM-(non)trivial}.
  We now observe that the uniqueness statement ``there exists at most one infinite open cluster'' can be rephrased as follows:
  For every $0$-cycle $\gm$~(in the reduced homology sense), if $\gm$ is the boundary of an infinite open $1$-chain, then $\gm$ is trivial in the open subgraph $|\eta|$.
  In other words, every BM-trivial $0$-cycle is trivial in $|\eta|$.
  With this observation in mind, we are led to ask the following question for the $k$-plaquette percolation model on $\R^d$ and any $p\in[0,1]$:
  \[
  \P_p(\text{every BM-trivial $(k-1)$-cycle is trivial in $|\eta|$})=1?
  \]
  Here, $|\eta|$ denotes the geometric realization of a $k$-plaquette configuration $\eta\in\Om$.
\end{enumerate}

\subsection*{Statements and Declarations}
\textbf{Data Availability.}
There are no data associated with this work.\\
\textbf{Conflict of Interest.}
The authors declare that there are no conflicts of interest.

\bibliographystyle{amsplain}
\bibliography{references}

\end{document}